\pgfplotsset{soldot/.style={color=black,only marks,mark=*}} \pgfplotsset{holdot/.style={color=black,fill=white,only marks,mark=*}}
\newtheorem{thm}{Theorem}
\newtheorem{prop}[thm]{Proposition}
\newtheorem{cor}[thm]{Corollary}
\newtheorem{lem}[thm]{Lemma}
\newtheorem{prob}[thm]{Problem}
\theoremstyle{definition}
\theoremstyle{remark}
\newtheorem{rem}[thm]{Remark}
\newtheorem{exa}[thm]{Example}
 \newcommand{\N}{{\mathbb N}}
\newcommand{\Z}{{\mathbb Z}} \newcommand{\R}{{\mathbb R}}
 \newcommand{\C}{{\mathbb C}}
\newcommand{\sph}{{\mathbb S}}
\newcommand{\pol}{{\EuScript K}}
\newcommand{\tildebaja}{{\raise.17ex\hbox{$\scriptstyle\sim$}}}
\newcommand{\Int}{\operatorname{Int}}
\newcommand{\im}{\operatorname{Im}}
\newcommand{\cl}{\operatorname{Cl}}
\newcommand{\dist}{\operatorname{dist}}
\newcommand{\id}{\operatorname{id}}
\newcommand{\zar}{\operatorname{zar}}
\newcommand{\Reg}{\operatorname{Reg}}
\newcommand{\Sing}{\operatorname{Sing}}
\newcommand{\x}{{\tt x}}
\newcommand{\gtm}{{\mathfrak m}} \newcommand{\gtn}{{\mathfrak n}}
 \newcommand{\gtM}{{\mathfrak M}}
\newcommand{\ol}{\overline}
\newcommand{\veps}{\varepsilon}
\newcommand{\sfs}{\mathsf{s}}
\newcommand{\sft}{\mathsf{t}}
\begin{document}

\title[Differentiable approximation of continuous semialgebraic maps]{Differentiable approximation of continuous semialgebraic maps}

\author{Jos\'e F. Fernando}
\address{Departamento de \'Algebra, Geomatr\'\i a y Topolog\'\i a, Facultad de Ciencias Matem\'aticas, Universidad Complutense de Madrid, 28040 MADRID (SPAIN)}
\email{josefer@mat.ucm.es}
\thanks{The first author is supported by Spanish GRAYAS MTM2014-55565-P, Spanish STRANO MTM2017-82105-P and Grupos UCM 910444. This article has been mainly written during a couple of one month research stays of the first author in the Dipartimento di Matematica of the Universit\`a di Trento. The first author would like to thank this department for the invitation and the very pleasant working conditions.}

\author{Riccardo Ghiloni}
\address{Dipartimento di Matematica, Via Sommarive, 14, Universit\`a di Trento, 38123 Povo (ITALY)}
\email{riccardo.ghiloni@unitn.it}
\thanks{The second author is supported by GNSAGA of INDAM} 

\date{}

\subjclass[2010]{Primary 14P10, 57Q55; Secondary 14P05, 14P20}

\keywords{Approximation of semialgebraic maps, approximation of maps between polyhedra}

\maketitle

\begin{abstract}
In this work we approach the problem of approximating uniformly continuous semialgebraic maps $f:S\to T$ from a compact semialgebraic set $S$ to an arbitrary semialgebraic set $T$ by semialgebraic maps $g:S\to T$ that are differentiable of class~${\mathcal C}^\nu$ for a fixed integer $\nu\geq1$. As the reader can expect, the difficulty arises mainly when one tries to keep the same target space after approximation. For $\nu=1$ we give a complete affirmative solution to the problem: such a uniform approximation is always possible. For $\nu \geq 2$ we obtain density results in the two following relevant situations: either $T$ is compact and locally ${\mathcal C}^\nu$ semialgebraically equivalent to a polyhedron, for instance when $T$ is a compact polyhedron; or $T$ is an open semialgebraic subset of a Nash set, for instance when $T$ is a Nash set. Our density results are based on a recent ${\mathcal C}^1$-triangulation theorem for semialgebraic sets due to Ohmoto and Shiota, and on new approximation techniques we develop in the present paper. Our results are sharp in a sense we specify by explicit examples.


\end{abstract}


\section{Introduction and main results}\label{s1}

The importance of approximation of continuous functions is a natural question that arises from Stone-Weierstrass result of uniform approximation of continuous functions over compact sets by polynomial functions. This result provides naturally a polynomial approximation result for $\R^n$-valued continuous maps $f:K\to\R^n$ defined on a compact set~$K$. Difficulties arise when trying to restrict the image of the approximating map. One way to proceed is to be more flexible with the type of approximating maps but also by considering tame sets as domain of definition and target space. When considering suitable classes of approximating maps with some kind of regularity (and not only polynomials), we can also relax the compactness assumption concerning the domain of definition. A crucial fact is the topology we provide the space of maps we are working with because it determines the kind of approximation result we can achieve. Namely, given a space of maps ${\mathcal F}(X,Y)$ between two sets $X$ and $Y$ endowed with a certain topology and a subset ${\mathcal G}(X,Y)$, each map in ${\mathcal F}(X,Y)$ can be approximated by maps in ${\mathcal G}(X,Y)$ if and only if ${\mathcal G}(X,Y)$ is dense in ${\mathcal F}(X,Y)$.

In order to state properly the main results collected in this section, we need some preliminary definitions. Recall that a set $S\subset\R^m$ is \em semialgebraic \em if it is a Boolean combi\-nation of sets defined by polynomial equalities and inequalities. A map $f:S\to T$ between semialgebraic sets $S\subset\R^m$ and $T\subset\R^n$ is \em semialgebraic \em if its graph is a semialgebraic subset of $\R^{m+n}$. Let $\Omega$ be a (non-empty) open semialgebraic subset of $\R^m$, let $\nu\in\N\cup\{\infty\}$ and let ${\mathcal S}^\nu(\Omega):={\mathcal S}^\nu(\Omega,\R)$ be the set of all (real-valued) semialgebraic functions defined on $\Omega$ that are differentiable of class~${\mathcal C}^\nu$. If $\nu=\infty$, the functions in ${\mathcal S}^\infty(\Omega)$ are called \em Nash functions \em on $\Omega$, and the set ${\mathcal S}^\infty(\Omega)$ is usually denoted by ${\mathcal N}(\Omega)$. We equip ${\mathcal S}^\nu(\Omega)$ with the following \em ${\mathcal S}^\nu$ $($Whitney$\,)$ topology\em, which makes it a Hausdorff topological ring \cite[II.1]{sh}. For each positive continuous semialgebraic function $\veps:\Omega\to\R$ and for each multi-index $\alpha:=(\alpha_1,\ldots,\alpha_m)\in\N^m$ with $|\alpha|:=\sum_{i=1}^m\alpha_i\leq\nu$, we define the set
\[
{\mathcal U}_{\veps,\alpha}:=\big\{g\in {\mathcal S}^\nu(\Omega):\, |D_\alpha\,g|<\veps\big\},
\]
where $D_\alpha\,g$ denotes the partial derivative $\partial^{|\alpha|}g/\partial\x_1^{\alpha_1}\cdots\partial\x_m^{\alpha_m}$. The sets ${\mathcal U}_{\veps,\alpha}$ form a subbase of neighborhoods of the zero function for the above-mentioned topology. A semialgebraic function $f:S\to\R$ defined on the semialgebraic set $S\subset\R^m$ is ${\mathcal S}^\nu$ $($or \em Nash \em if $\nu=\infty)$ if there exist an open semialgebraic neighborhood $U$ of $S$ in $\R^m$ and a function $F\in{\mathcal S}^\nu(U)$ extending $f$ to $U$. We denote the set of these functions by ${\mathcal S}^\nu(S)$ (or by ${\mathcal N}(S)$ if $\nu=\infty$).

Suppose now that $\nu\in\N$ and $S\subset\R^m$ is \em locally compact\em. The latter condition on $S$ is equivalent to the existence of an open semialgebraic neighborhood $\Omega$ of $S$ in $\R^m$ such that $S$ is \em closed \em in $\Omega$. In the ${\mathcal S}^\nu$ category there are bump functions as well as finite partitions of unity \cite[II.2]{sh}. Using this it follows, due to the closedness of $S$ in $\Omega$, that the restriction map $\rho_{\scriptscriptstyle\Omega}\!:{\mathcal S}^\nu(\Omega)\to{\mathcal S}^\nu(S)$ is surjective. We can equip the set ${\mathcal S}^\nu(S)$ with the quotient topology $\tau_\Omega^\nu$ induced by $\rho_{\scriptscriptstyle\Omega}$ and consequently $\rho_{\scriptscriptstyle\Omega}$ is an open (continuous) map and ${\mathcal S}^\nu(S)$ is a Haudorff topological ring. Such a quotient topology on ${\mathcal S}^\nu(S)$ is called \em ${\mathcal S}^\nu$~topology \em (induced by $\rho_{\scriptscriptstyle\Omega}$). This topology does not depend on the chosen open semialgebraic set $\Omega$ that contains $S$.
\begin{proof}
Let $\Omega'\subset\Omega$ be an open semialgebraic neighborhood of $S$ in $\Omega$. We claim: $\tau_\Omega^\nu=\tau_{\Omega'}^\nu$.

Consider an ${\mathcal S}^\nu$ partition of unity $\theta_1,\theta_2:\Omega\to\R$ subordinated to the open semialgebraic covering $\{\Omega\setminus S,\Omega'\}$. The homomorphisms $\varPsi_{\Omega\Omega'}:{\mathcal S}^\nu(\Omega)\to{\mathcal S}^\nu(\Omega'): f\mapsto(\theta_2 f)|_{\Omega'}$ and $\varPsi_{\Omega'\Omega}:{\mathcal S}^\nu(\Omega')\to {\mathcal S}^\nu(\Omega):g\mapsto\theta_2 g$ are continuous. The ${\mathcal S}^\nu$ function $\theta_2g$ values $\theta_2(x)g(x)$ if $x\in\Omega'$ and $0$ if $x\in\Omega\setminus\Omega'$. The restrictions of $(\theta_2f)|_{\Omega'}$ and $\theta_2g$ to $S$ value respectively $f|_S$ and $g|_S$. Thus, we have the commutative diagram 
\[
\xymatrix{{\mathcal S}^\nu(\Omega)\ar[d]_{\rho_\Omega}\ar@<0.5ex>[r]^{\varPsi_{\Omega\Omega'}}&{\mathcal S}^\nu(\Omega')\ar@<0.5ex>[l]^{\varPsi_{\Omega'\Omega}}\ar[d]^{\rho_{\Omega'}}\\ 
{\mathcal S}^\nu(S)\ar@<0.5ex>[r]^{\text{Id}}&{\mathcal S}^\nu(S)\ar@<0.5ex>[l]^{\text{Id}} } 
\]
where $\rho_\Omega:{\mathcal S}^\nu(\Omega)\to {\mathcal S}^\nu(S) $ and $\rho_{\Omega'}:{\mathcal S}^\nu(\Omega')\to{\mathcal S}^\nu(S)$ are the open quotient homomorphisms. The identity map $\text{Id}$ from left to right is continuous if and only if $\text{Id}\circ\rho_\Omega$ is continuous if and only if $\rho_{\Omega'}\circ \varPsi_{\Omega\Omega'}$ is continuous, which is true. The continuity of the identity map from right to left is similar. Consequently, the topology does not depend on the neighborhood. The previous argument, which works for the general locally compact case, appears in \cite[p.\hspace{2pt}75]{bfr} implemented only for the particular case of Nash manifolds with corners.
\end{proof}

In fact, one can show using \cite[Prop.2.C.3]{bfr} that the ${\mathcal S}^\nu$ topology of ${\mathcal S}^\nu(S)$ does not depend on the ${\mathcal S}^\nu$ immersion of $S$ as a closed subset of an open semialgebraic subset of an arbitrary Euclidean space.

By \cite{dk} the space ${\mathcal S}^0(S)$ (for $\nu=0$) coincides with the topological ring of all continuous semialgebraic functions on $S$ endowed with the topology that has as a basis of open neighborhoods of the zero function the family constituted by the sets $\{g\in {\mathcal S}^0(S):\, |g|<\veps\}$, where $\veps:S\to\R$ is any positive continuous semialgebraic function.

A semialgebraic map $f:=(f_1,\ldots,f_n):S\to T\subset\R^n$ between semialgebraic sets is said to be ${\mathcal S}^\nu$ (or \em Nash \em if $\nu=\infty$) if its components $f_i:S\to\R$ are ${\mathcal S}^\nu$ functions. We denote ${\mathcal S}^\nu(S,T)$ the set of these maps (or ${\mathcal N}(S,T)$ if $\nu=\infty$). If $\nu\in\N$, we endow ${\mathcal S}^\nu(S,T)$ with the subspace topology induced by the product topology of ${\mathcal S}^\nu(S,\R^n)=({\mathcal S}^\nu(S,\R))^n$. We call such topology the \em ${\mathcal S}^\nu$ topology \em of ${\mathcal S}^\nu(S,T)$. 

Let $\nu\geq1$. A semialgebraic set $M\subset\R^m$ is called an (\emph{affine}) \em ${\mathcal S}^\nu$~manifold \em (or a \em Nash manifold \em if $\nu=\infty$) if it is in addition a ${\mathcal C}^\nu$ submanifold of (an open subset of) $\R^m$. As in this paper all manifolds are affine we often drop the adjective affine. A~map $f:M\to N$ between ${\mathcal S}^\nu$ manifolds is ${\mathcal S}^\nu$ in the sense of the above paragraph if and only if it is semialgebraic and differentiable of class ${\mathcal C}^\nu$ in the usual sense for ${\mathcal C}^\nu$ manifolds involving charts. For more details concerning the spaces of ${\mathcal S}^\nu$ maps, we refer the reader to \cite[2.C~\&~2.D]{bfr}.

We recall also that if $U$ is an open semialgebraic subset of $\R^n$, a set $Y\subset U$ is called \em Nash subset of $U$ \em if there exists a Nash function $g\in {\mathcal N}(U)$ such that $Y$ is the zero locus of $g$. A \em Nash set \em is a Nash subset of an open semialgebraic subset of some $\R^n$. Naturally, a \em Nash subset \em $Z$ of a Nash set $Y$ is a Nash set $Z$ that is closed in $Y$. Observe that a Nash set is semialgebraic and a Nash manifold is a (non-singular) Nash set.

\subsection{Some relevant known approximation results}
There are many approximation results in the literature and we recall here some of them.

\subsubsection*{$\mathcal{S}^\nu$ maps by Nash maps} 
Efroymson's approximation theorem \cite[\S1]{ef} ensures that conti\-nuous semialgebraic functions can be approximated by Nash functions on a Nash manifold. This statement was improved by Shiota in many directions \cite{sh}, for instance, providing a similar approximation result for ${\mathcal S}^\nu$ functions using the ${\mathcal S}^\nu$ topology and proving relative versions of such a ${\mathcal S}^\nu$ approximation result. The previous results can be extended to approximate ${\mathcal S}^\nu$ maps $f:S\to N$ from a locally compact semialgebraic set $S\subset\R^m$ to a Nash manifold $N\subset\R^n$ by Nash maps $g:S\to N$ in the ${\mathcal S}^\nu$ topology, making use of a suitable Nash tubular neighborhood of $N$ in $\R^n$ (see \cite[Lem.I.3.2]{sh}). We can even go further and obtain approximation results for ${\mathcal S}^\nu$ maps between Nash sets with monomial singularities.

\begin{thm}[{\cite[Thm.1.7]{bfr}}]\label{thm17}
Let $M$ and $N$ be Nash manifolds, let $X\subset M$ be a Nash set with monomial singularities and let $Y\subset N$ be a Nash set with monomial singularities whose irreducible components are non-singular. Define $m:=\dim(M)$, $n:=\dim(N)$ and $q:=m\big(\binom{n}{[n/2]}-1\big)$ where $[n/2]$ denotes the integer part of $n/2$. If $\nu\ge\! q$ then every ${\mathcal S}^\nu$ map $f:X\to Y$ that preserves irreducible components can be ${\mathcal S}^{\nu-q}$ approximated by Nash maps $g:X\to Y$.
\end{thm}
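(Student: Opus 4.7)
The plan is to reduce the theorem, via the combinatorial structure of monomial singularities, to the classical Efroymson--Shiota approximation of $\mathcal{S}^\nu$ maps into non-singular Nash manifolds, and then to assemble the independent approximations by an induction over the intersection lattice of the irreducible components that preserves the target $Y$.

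Write $X=X_1\cup\cdots\cup X_r$ and $Y=Y_1\cup\cdots\cup Y_s$ for the irreducible decompositions. Each $Y_j$ is a Nash submanifold of $N$ by hypothesis, and each $X_i$ is in fact also a Nash submanifold of $M$: at any point $p\in X_i$, the ambient set $X$ is locally a union of coordinate hyperplanes in a Nash chart, and by irreducibility $X_i$ must coincide locally with one of them. The component-preserving hypothesis on $f$ yields a map $\sigma\colon\{1,\ldots,r\}\to\{1,\ldots,s\}$ with $f(X_i)\subset Y_{\sigma(i)}$, and continuity propagates this to $f(X_I)\subset\bigcap_{i\in I}Y_{\sigma(i)}$ for every non-empty $I$ with $X_I:=\bigcap_{i\in I}X_i\neq\emptyset$. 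On each single component $X_i$, the map $f|_{X_i}\colon X_i\to Y_{\sigma(i)}$ is then an $\mathcal{S}^\nu$ map between Nash manifolds and can be approximated by a Nash map $g_i$ using Shiota's extension of Efroymson's theorem together with a Nash retraction from a tubular neighborhood of $Y_{\sigma(i)}$ in $N$ (see \cite[Lem.\,I.3.2]{sh}).

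The main obstacle is that the $g_i$, constructed independently, will disagree on the intersections $X_i\cap X_j$, so they do not assemble into a single map $g\colon X\to Y$. The plan is to repair this by induction over the intersection poset: order the strata $X_I$ by decreasing depth and, at each inductive step, extend the Nash map already built on the closed union of deeper strata to a new component while keeping its values frozen on the overlap and ensuring the image lands inside the prescribed target stratum $\bigcap_{i\in I}Y_{\sigma(i)}$. This employs $\mathcal{S}^\nu$ partitions of unity (available by the locally compact assumption, cf.\ the quotient-topology argument earlier in the excerpt) and Nash bump functions vanishing to high order along boundaries of strata, followed by another Nash retraction onto the relevant target stratum; an Efroymson--Shiota relative approximation step turns the resulting $\mathcal{S}^\nu$ gluing into a Nash map.

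The derivative loss $q=m\bigl(\binom{n}{[n/2]}-1\bigr)$ then records the cost of this iteration. The combinatorial factor $\binom{n}{[n/2]}-1$ bounds, via Sperner's theorem, the length of the longest chain of proper intersections among the components of $Y$ that can occur in ambient dimension $n$, and thus the number of non-trivial inductive steps; the factor $m$ records the per-step cost in derivatives, coming from the bump functions, Nash retractions, and extension estimates, whose derivatives depend on the ambient dimension of $X$ in $M$. The hard part will be precisely this relative Nash approximation step: simultaneously freezing values on an already fixed closed Nash subset, staying inside a smaller prescribed target stratum with monomial singularities, and retaining the Nash character globally — it is here that the uniform loss of $q$ derivatives becomes unavoidable and where the proof leans most heavily on the monomial structure of the singularities of both $X$ and $Y$.
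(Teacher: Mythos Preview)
This theorem is not proved in the present paper: it is stated in the introduction as a known result, with the label \verb|\cite[Thm.1.7]{bfr}|, and is invoked only as background. Consequently there is no ``paper's own proof'' to compare your proposal against; the full argument lives in \cite{bfr}.

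That said, your outline captures the right architecture --- approximate on each non-singular component and then repair the mismatches by an induction over the intersection stratification, paying a fixed number of derivatives at each step --- and this is indeed the philosophy of \cite{bfr}. But your explanation of the constant $q$ is not right. Sperner's theorem bounds the size of an \emph{antichain} in the Boolean lattice, not the length of a chain; the longest chain of proper coordinate intersections in dimension $n$ has length $n$, not $\binom{n}{[n/2]}-1$. In \cite{bfr} the central binomial coefficient arises from a different combinatorial count tied to the extension machinery for Nash maps on normal-crossings sets (the number of pieces one must control simultaneously when gluing, which is an antichain-type quantity), not from the depth of the induction. So the ``per-step cost $m$ times number-of-steps'' heuristic you give does not match the actual source of the loss. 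Your sketch also leaves genuinely open the ``hard part'' you yourself flag: the relative Nash approximation that simultaneously fixes values on a closed Nash subset and keeps the image inside a prescribed smaller stratum is precisely the technical core of \cite{bfr}, and nothing in your proposal indicates how to carry it out.
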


As a by-product, one has the following classification result for Nash manifolds with corners. Recall that a Nash manifold with corners $Q$ has \em divisorial corners \em if it is contained in a Nash manifold $M$ of its same dimension and the Nash closure of $\partial Q$ in $M$ is a normal-crossings divisor (as one can expect this is not always the case and a careful study can be found in \cite[1.12]{fgr}).

\begin{thm}[{\cite[Thm.1.8]{bfr}}]\label{difcor}
Let $Q_1$ and $Q_2$ be two $m$-dimensional Nash manifolds with divisorial corners. If $Q_1$ and $Q_2$ are ${\mathcal S}^\nu$ diffeomorphic for some $\nu>m^2$ then they are Nash diffeomorphic. 
\end{thm}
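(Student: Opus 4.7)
The plan is to apply the approximation Theorem~\ref{thm17} after extending $f\colon Q_1\to Q_2$ to an $\mathcal{S}^\nu$ diffeomorphism of the ambient Nash manifolds that preserves the divisor structure. By hypothesis, $Q_i\subset M_i$ with $M_i$ an $m$-dimensional Nash manifold, and $D_i$, the Nash closure of $\partial Q_i$ in $M_i$, is a normal-crossings divisor, hence a Nash set with monomial singularities whose irreducible components (the Nash closures of the codimension-one faces of $Q_i$) are non-singular.

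First I would construct an $\mathcal{S}^\nu$ diffeomorphism $F\colon U_1\to U_2$ between open semialgebraic neighborhoods $U_i$ of $Q_i$ in $M_i$ extending $f$ and satisfying $F(D_1\cap U_1)=D_2\cap U_2$ with matching irreducible components. Locally $Q_i$ is modeled on $[0,\infty)^k\times\R^{m-k}$, and the corner-preserving diffeomorphism $f$ extends across each face by coordinate-wise mirror reflection; the local extensions glue globally using an $\mathcal{S}^\nu$ partition of unity \cite[II.2]{sh}, while the fact that $f$ already respects the stratifications of the two corner structures guarantees that the various local pieces are compatible.

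Next I would apply Theorem~\ref{thm17} in two stages. First, approximate the restriction $F|_{D_1}\colon D_1\to D_2$ by a Nash map $g_0\colon D_1\to D_2$ preserving irreducible components; Theorem~\ref{thm17} applies directly since each $D_i$ is a Nash set with monomial singularities with non-singular components. Then extend $g_0$ to a Nash map $G\colon U_1\to U_2$ that is $\mathcal{S}^\nu$-close to $F$, using a Nash tubular neighborhood of $D_1$ in $M_1$ together with Nash bump functions. The hypothesis $\nu>m^2$ is what absorbs the loss $q=m\bigl(\binom{m}{[m/2]}-1\bigr)$ of derivatives from Theorem~\ref{thm17} and still delivers at least an $\mathcal{S}^1$ approximation.

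Finally, because $G$ is $\mathcal{S}^1$-close to the diffeomorphism $F$, the inverse function theorem and a standard argument show that $G$ is itself a Nash diffeomorphism onto its image. Since $G$ sends each irreducible component of $D_1$ onto the corresponding component of $D_2$ and $Q_i$ is specified by a fixed choice of side along each component of $D_i$, the closeness of $G$ to $F$ on $Q_1$ forces $G(Q_1)=Q_2$, and $G|_{Q_1}$ is the desired Nash diffeomorphism. The hard part is precisely this two-stage approximation: producing a Nash $G$ that matches the divisors \emph{exactly} rather than only to small error. Approximating on $D_1$ first (exploiting the ``preserves irreducible components'' clause of Theorem~\ref{thm17}) and only afterwards extending to the ambient is the design choice that enforces the exact match; approximating $F$ globally first and trying to correct the divisor at the end would run into the difficulty that an arbitrary $\mathcal{S}^\nu$-small perturbation of $F$ need not carry $D_1$ exactly onto $D_2$.
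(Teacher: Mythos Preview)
The paper does not prove this theorem: it is stated with attribution to \cite[Thm.1.8]{bfr} in the survey subsection ``Some relevant known approximation results'' and no argument is supplied here. So there is no proof in the present paper to compare your attempt against; what you have written is a plausible reconstruction of the argument in \cite{bfr}.

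That said, two points in your sketch do not close as written. First, gluing local reflection-extensions of $f$ by an ${\mathcal S}^\nu$ partition of unity does not in general yield a diffeomorphism: a convex combination $\sum_\alpha\theta_\alpha F_\alpha$ of diffeomorphisms need not be injective, nor even a local diffeomorphism. One has to arrange the local extensions so that they \emph{agree} on overlaps (so no averaging is needed), or work at the level of vector fields/isotopies rather than maps. Second, your assertion that $\nu>m^2$ ``absorbs the loss $q=m\bigl(\binom{m}{[m/2]}-1\bigr)$'' from Theorem~\ref{thm17} fails already at $m=4$: there $q=4(6-1)=20>16=m^2$, and for $m=5$ one gets $q=45>25$. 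Hence a direct invocation of Theorem~\ref{thm17} with the bounds displayed in this paper does not deliver the needed ${\mathcal S}^1$ closeness under the hypothesis $\nu>m^2$; the actual proof in \cite{bfr} must exploit a sharper estimate (or a variant of Theorem~\ref{thm17} tailored to the situation at hand) to make the numerology match.
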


\subsubsection*{Nash maps by regular maps}

The problem of approximating (smooth or) Nash maps between real algebraic manifolds by regular maps is an old and deep question in real algebraic geometry. Let $X$ and $Y$ be real algebraic manifolds of positive dimension such that $X$ is compact. The set ${\mathcal R}(X,Y)$ of regular maps from $X$ to $Y$ turns out to be dense in the corresponding space ${\mathcal N}(X,Y)$ of Nash maps endowed with the ${\mathcal C}^\infty$ compact-open topolo\-gy only in exceptional cases. Besides the Stone-Weierstrass theorem (quoted above) for which $Y$ is an Euclidean space, the density of ${\mathcal R}(X,Y)$ in ${\mathcal N}(X,Y)$ is known only when the target space $Y$ is one of the spheres $\sph^1$, $\sph^2$, $\sph^4$ or a grassmannian. For a general rational real algebraic manifold $Y$, we must restrict hardly the possible domains of defi\-nition $X$ to some special types. A remarkable example is the density of ${\mathcal R}(\sph^m,\sph^n)$ in ${\mathcal N}(\sph^m,\sph^n)$ for each pair $(m,n)$ when $n=1,2,4$. If $n\neq1,2,4$, the density of ${\mathcal R}(\sph^m,\sph^n)$ in ${\mathcal N}(\sph^m,\sph^n)$ remains a fascinating mystery. For further details, we refer the reader to \cite[Ch.12 \& \S13.3]{bcr} and the quoted references there, to the survey \cite{bk10} and to the articles \cite{bk87,bk88,bk89,bk93,bk07,bks97,gh07,kz10}. In \cite{gh06-1,gh06-2} the second author showed that if $Y$ is `generic' in a suitable way, ${\mathcal R}(X,Y)$ is an `extremely small' closed subset of ${\mathcal N}(X,Y)$. This lack of regular maps between real algebraic manifolds seems to be the main obstru\-ction for an extension of the Nash-Tognoli algebraization techniques from smooth manifolds to singular polyhedral spaces and, in particular, to compact Nash sets \cite{ak,gt}.

\subsubsection*{Continuous maps by continuous rational maps.} 
Kucharz has studied deeply approximation results of continuous maps between a compact algebraic manifold $X$ and a sphere $\sph^n$ by continuous rational maps. As $X$ is compact the author considers on the space ${\mathcal C}(X,\sph^n)$ of continuous maps from $X$ to $\sph^n$ the compact-open topology. In case $X$ has dimension $n$, the space ${\mathcal R}_0(X,\sph^n)$ of (nice) continuous rational maps from $X$ to $\sph^n$ is dense in ${\mathcal C}(X,\sph^n)$ (see \cite[Thm.1.2, Cor.1.3]{kz} for further details). In addition, for any pair $(m,n)$ of non-negative integers, the set ${\mathcal R}_0(\sph^m,\sph^n)$ is dense in ${\mathcal C}(\sph^m,\sph^n)$ (see \cite[Thm.1.5]{kz}). In general ${\mathcal R}_0(X,\sph^n)$ needs not to be dense in ${\mathcal C}(X,\sph^n)$. Simple obstructions can be expressed in terms of homology or cohomology classes representable by algebraic subsets. We refer the reader to \cite{kz2,kz} for further details.

\subsubsection*{Homeomorphisms between smooth manifolds by diffeomorphisms.}
There are many and celebrated approximation results concerning homeomorphisms between smooth manifolds by diffeomorphisms in the literature. The obstruction theory originated from the problem of smoothing a continuous map with good properties or smoothing a combinatorial manifold $M$ deserves special attention because it is in the core of differential topology. Such a theory was mainly developed by Milnor, Thom, Munkres and Hirsch. They found that the obstructions concentrate in certain homology classes belonging to the homology groups of the combinatorial manifold $M$ relative to its boundary $\partial M$ with coefficients in the quotient groups of the orientation-preserving diffeomorphisms of the unit spheres $\sph^p$ modulo the orientation-preserving diffeomorphisms of the unit balls ${\mathbb B}^{p+1}$. We refer the reader to \cite{hi1,hi2,mi1,mu1,mu2,mu3,th} for further details concerning the mentioned foundational obstruction results and to \cite{DP,HM,IKO,MP,Mu} for some recent developments. 

In addition, Milnor \cite{mi2} discovered two compact polyhedra which are homeomorphic but not piecewise linearly homeomorphic (a counterexample to the Hauptvermutung). In the case in which the homeomorphism is semialgebraic the situation is completely different. In fact, Shiota and Yokoi \cite{shy} proved, using approximation techniques, that two semialgebraically homeomorphic compact polyhedra in $\R^n$ are also piecewise linearly homeomorphic. In \cite{sh3} Shiota improved the previous result and he obtained a PL homeomorphism by a constructive procedure, involving more sophisticated approximation techniques, that starts from the original homeomorphism. He proved that, for any ordered field $R$ equipped with any o-minimal structure, two definably homeomorphic compact polyhedra in $R^n$ are PL homeomorphic (the o-minimal Hauptvermutung). Together with the fact that any compact definable set is definably homeomorphic to a compact polyhedron, he concludes that o-minimal topology is `tame'.

\subsection{Our approximation results in the semialgebraic setting.}
The general problem regarding approximation of maps in the semialgebraic setting can be stated as follows:

\begin{prob}\label{prob1}
Let $S\subset\R^m$ and $T\subset\R^n$ be semialgebraic sets and let $\ell,\nu\in\N\cup\{\infty\}$ be such that $S$ is locally compact and $\ell<\nu$. Is ${\mathcal S}^\nu(S,T)$ dense in ${\mathcal S}^\ell(S,T)$ endowed with the ${\mathcal S}^\ell$~topology?
\end{prob}

It is well-known that the answer is affirmative if the target space is a ${\mathcal S}^\nu$ manifold.

\begin{thm}[{\cite[I \& II]{sh}}] \label{kr1}
For each $\ell,\nu\in\N\cup\{\infty\}$ with $\ell<\nu$, if $T$ is a ${\mathcal S}^\nu$ manifold, then ${\mathcal S}^\nu(S,T)$ is dense in ${\mathcal S}^\ell(S,T)$.
\end{thm}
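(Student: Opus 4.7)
The plan is to reduce to the known approximation theorem for real-valued semialgebraic functions on a locally compact semialgebraic set, and then to transfer the approximation from the ambient Euclidean space back to $T$ using a $\mathcal{S}^\nu$ tubular retraction. I would carry this out in three steps.

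First, I would invoke the scalar version of the result (Efroymson for Nash approximation of continuous semialgebraic functions, and Shiota's ${\mathcal S}^\nu$-refinement for higher regularity): for any locally compact semialgebraic $S\subset\R^m$, ${\mathcal S}^\nu(S)$ is dense in ${\mathcal S}^\ell(S)$. Viewing $f=(f_1,\dots,f_n)\in{\mathcal S}^\ell(S,T)$ as an element of ${\mathcal S}^\ell(S,\R^n)$ and applying the scalar density componentwise, I produce, for any prescribed positive continuous semialgebraic error function $\veps:S\to\R$, a map $g=(g_1,\dots,g_n)\in{\mathcal S}^\nu(S,\R^n)$ that is ${\mathcal S}^\ell$-close to $f$. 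The issue is that $g(S)$ will generally leave $T$.

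Second, I would use a $\mathcal{S}^\nu$ tubular neighborhood of $T$ to land back on the target. Since $T$ is an affine $\mathcal{S}^\nu$ manifold, there exist an open semialgebraic neighborhood $U\supset T$ in $\R^n$ and a $\mathcal{S}^\nu$ semialgebraic retraction $\pi:U\to T$ (Shiota's semialgebraic tubular neighborhood theorem, applied after embedding $T$ as a closed $\mathcal{S}^\nu$ submanifold of an open semialgebraic subset of $\R^n$). The distance from $f(x)$ to $\R^n\setminus U$ is a positive continuous semialgebraic function on $S$, so by shrinking the scalar error $\veps$ from the previous step below this distance, I can force $g(S)\subset U$, and then the composition $\tilde g:=\pi\circ g:S\to T$ is a well-defined element of ${\mathcal S}^\nu(S,T)$.

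Third, I would verify that $\tilde g=\pi\circ g$ is ${\mathcal S}^\ell$-close to $f=\pi\circ f$. By Fa\`a di Bruno / the chain rule, each partial derivative $D^\alpha(\pi\circ g)-D^\alpha(\pi\circ f)$ of order $|\alpha|\le\ell$ is a universal polynomial expression in the derivatives of $\pi$ of orders $\le\ell$ evaluated along $g$ and $f$ and in the derivatives of $g$ and $f$ of orders $\le\ell$. Since $\pi$ is $\mathcal{S}^\nu$ with $\nu\ge\ell$, its partial derivatives up to order $\ell$ are continuous semialgebraic on $U$, so their pullbacks along $f$ are bounded by positive continuous semialgebraic functions on $S$. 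Given the target ${\mathcal S}^\ell$-neighborhood of $f$ prescribed by the Whitney topology, I would calibrate the scalar error $\veps$ from the first step to be sufficiently small relative to these functions (and to the prescribed accuracy), which is possible because ${\mathcal S}^\ell(S)$ is a topological ring and its Whitney topology is generated by arbitrary positive continuous semialgebraic weights.

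The main obstacle is the uniformity of the chain-rule estimate over the non-compact locally compact $S$: constants do not suffice, and one must absorb the derivatives of $\pi$ along $f$ into the continuous semialgebraic weight that defines the Whitney basis of neighborhoods. Once this is packaged correctly, the argument is essentially formal; the non-trivial input is the scalar density theorem of Efroymson--Shiota together with the semialgebraic tubular neighborhood.
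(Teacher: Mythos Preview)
Your proposal is correct and matches the paper's own explanation essentially verbatim: the paper does not give a full proof of this cited result but sketches exactly your argument in the paragraph following the statement, namely approximate $f:S\to T\subset\R^n$ by a ${\mathcal S}^\nu$ map $g:S\to\R^n$ via \cite[II.4.1]{sh} and then compose with a ${\mathcal S}^\nu$ (bent) tubular neighborhood retraction $\rho:U\to T$ from \cite[I.3.5 \& II.6.1]{sh}. Your third step (the chain-rule estimate) is what the paper leaves implicit; alternatively, you could invoke the continuity of left composition (Proposition~\ref{comr} in the paper) to conclude that $\rho\circ g$ is ${\mathcal S}^\ell$-close to $\rho\circ f=f$ without unpacking Fa\`a di Bruno by hand.
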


The reason why the latter result works is that each ${\mathcal S}^\ell$ map $f:S\to T\subset\R^n$ can be ${\mathcal S}^\ell$ approximated by a ${\mathcal S}^\nu$ map $g:S\to\R^n$ (see \cite[II.4.1]{sh}) and then one can use a ${\mathcal S}^\nu$~(bent) tubular neighborhood $\rho:U\to T$ of $T$ in $\R^n$ (see \cite[I.3.5 \& II.6.1]{sh}) to obtain the desired approximating map $\rho\circ g$. However, an arbitrary semialgebraic set $T\subset\R^n$ does not have ${\mathcal S}^\nu$ tubular neighborhoods in $\R^n$ if $\nu\geq1$ (see Example \ref{exa:no-retraction} below). It has only ${\mathcal S}^0$~tubular neighborhoods in $\R^n$, provided it is locally compact \cite{dk}.

The following example shows that Problem \ref{prob1} does not have an affirmative solution in general if $\ell\geq1$ and $T$ is not a ${\mathcal S}^\nu$ manifold, even if $S$ is a compact real algebraic manifold and $T$ is a compact real algebraic set.

\begin{exa}
Let $S\subset\R^m$ be a Nash manifold (for instance a compact real algebraic manifold). Assume $S$ meets transversally the hyperplane $\{x_1=0\}$ of $\R^m$ at a point $y_0\in S$. Let $T$ be the Nash set
\[
\{(x_1,\ldots,x_m,x_{m+1})\in S\times\R:\, x_1^{3\ell+1}-x_{m+1}^3=0\},
\]
where $\ell\geq1$ is a fixed positive integer. Observe that $T$ is a compact real algebraic set if $S$ is a compact real algebraic manifold. Choose the integer $\nu:=\ell+1$ and denote $x:=(x_1,\ldots,x_m)$. Observe that $T$ is a ${\mathcal S}^\ell$ manifold, but not a ${\mathcal S}^\nu$ manifold. Indeed, $S$ is not ${\mathcal S}^\nu$ smooth locally at $y_0$. Consider the ${\mathcal S}^\ell$ map $f:S\to T$, $x\mapsto(x,x_1^{\ell+1/3})$. Such a map cannot be ${\mathcal S}^\ell$ approximated by maps in ${\mathcal S}^\nu(S,T)$.

Suppose on the contrary that there exists a ${\mathcal S}^\nu$ map $g:=(g_1,\ldots,g_m,g_{m+1}):S\to T$ arbitrarily close to $f$ in the ${\mathcal S}^\ell$ topology. The projection $\pi:T\to S,\ (x,x_{m+1})\mapsto x$ is a Nash map. By \cite[Prop.2.D.1]{bfr}, the homomorphism $\pi_*:{\mathcal S}^\ell(S,T)\to{\mathcal S}^\ell(S,S),\ h\mapsto\pi\circ h$ is continuous for the ${\mathcal S}^\ell$ topologies. Thus, if $g$ is ${\mathcal S}^\ell$ close enough to $f$ (and hence ${\mathcal S}^1$ close to $f$), the map $g_*:=(g_1,\ldots,g_m):S\to S$ is a ${\mathcal S}^\nu$ diffeomorphism (using semialgebraic inverse function theorem \cite[Prop.2.9.7]{bcr}). As $g_{m+1}=g_1^{\ell+1/3}$, it follows that $(g_{m+1}\circ g_*^{-1})(x)=x_1^{\ell+1/3}$ is a ${\mathcal S}^\nu$ function on $S$, which is a contradiction. Indeed, the function $S \to \R$, $x \mapsto x_1^{\ell+1/3}$ is not ${\mathcal S}^\nu$ locally at the point $y_0\in S$. This proves that ${\mathcal S}^\nu(S,T)$ is not dense in ${\mathcal S}^\ell(S,T)$ endowed with the ${\mathcal S}^\ell$ topology. $\sqbullet$
\end{exa}

In this paper we deal with Problem \ref{prob1} in the case $\ell=0$ and $S$ is compact. 

\subsubsection*{First main result} Our first result gives a complete affirmative solution to the mentioned version of Problem \ref{prob1} for $\nu=1$. In what follows we indicate $\|x\|_n$ the Euclidean norm of the vector $x$ of $\R^n$.

\begin{thm}\label{thm:C1}
Let $S\subset\R^m$ be a compact semialgebraic set and let $T\subset\R^n$ be a semialgebraic set. Then ${\mathcal S}^1(S,T)$ is dense in ${\mathcal S}^0(S,T)$. More precisely, given any $n\in\N$, the following assertion holds: for each $f\in{\mathcal S}^0(S,\R^n)$ and each $\veps>0$, there exists $g\in{\mathcal S}^1(S,\R^n)$ such that $g(S)\subset f(S)$ and $\|g(x)-f(x)\|_n<\veps$ for every $x\in S$.
\end{thm}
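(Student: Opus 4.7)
The strategy I would pursue is to construct the approximant in the factored form $g=f\circ h$, where $h\colon S\to S$ is a $\Cc^1$ semialgebraic self-map of $S$ uniformly close to the identity. Any such $h$ automatically gives $g(S)=f(h(S))\subset f(S)$, and the uniform continuity of $f$ on the compact set $S$ turns an estimate $\sup_{x\in S}\|h(x)-x\|_m<\delta$ into the required $\|g(x)-f(x)\|_n<\veps$ once $\delta$ is small enough. All the difficulty is therefore concentrated in designing $h$ so that the composition $f\circ h$ is of class $\Cc^1$ on all of $S$, despite $f$ being only continuous.

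To locate the failure of smoothness of $f$, I would first produce, by semialgebraic cell decomposition, a filtration
\[
S=S_m\supset S_{m-1}\supset\cdots\supset S_0\supset S_{-1}=\varnothing
\]
by closed semialgebraic subsets with $\dim S_i\le i$, such that on each locally closed stratum $\Sigma_i:=S_i\setminus S_{i-1}$ the restriction $f|_{\Sigma_i}$ is of class $\Cc^1$. I would then invoke the Ohmoto--Shiota $\Cc^1$ semialgebraic triangulation theorem to obtain a $\Cc^1$ semialgebraic triangulation $\Phi\colon|K|\to S$ realizing every $S_i$ as a subcomplex, and refine it so that $\mathrm{diam}(f(\Phi(\sigma)))<\veps/2$ for every closed simplex $\sigma$ of $K$. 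This provides the combinatorial scaffolding that controls where $f$ can misbehave.

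Next I would build $h$ by induction on $i$, starting from $h=\id_S$ and successively modifying $h$ inside $\Cc^1$ semialgebraic tubular neighborhoods $N_i\supset N_i'$ of $h^{-1}(\Sigma_i)$. The modification forces $h(N_i')\subset\Sigma_i$ while arranging the rank condition
\[
\im(Dh(x))\subset T_{h(x)}\Sigma_i\qquad\text{for every }x\in N_i'\text{ with }h(x)\in\Sigma_i,
\]
and it makes the components of $Dh$ normal to $\Sigma_i$ vanish on $\partial N_i'$ to a polynomial order calibrated by the Lojasiewicz exponent of $f$ along $\Sigma_i$. The transitions between $N_i'$ and the complement of $N_i$ are effected by semialgebraic $\Cc^1$ bump functions with matching vanishing orders. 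The rank condition makes the product $Df|_{\Sigma_i}(h(x))\cdot Dh(x)$ meaningful wherever $h(x)$ lies in the non-smooth stratum $\Sigma_i$, and the prescribed vanishing orders guarantee continuity of this product across $\partial N_i'$; combined with the chain rule and the Lojasiewicz inequality this yields $g=f\circ h\in\Cc^1(S,\R^n)$.

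The main obstacle is the combinatorial bookkeeping of this induction: each modification of $h$ at stage $i$ must preserve the flatness and factorization properties already achieved at earlier stages $j<i$, and must remain coherent along the intersections of closures $\overline{\Sigma}_i\cap\overline{\Sigma}_j$, which by our choice of triangulation are again subcomplexes of $K$. The semialgebraic category is essential throughout: the Lojasiewicz inequality, in its explicit semialgebraic form, furnishes the polynomial bounds needed to balance the vanishing order of $Dh$ against the possibly unbounded tangential derivatives of $f$ along the lower strata, and it is what permits the required $\Cc^1$ bump functions and tubular neighborhoods to be produced within the semialgebraic class.
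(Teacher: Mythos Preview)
Your plan diverges from the paper's in a way worth noting: the paper applies Ohmoto--Shiota not to the domain $S$ but to the \emph{target} $T:=f(S)$. This yields a semialgebraic homeomorphism $\Psi:|L|\to T$ that is globally of class ${\mathcal S}^1$, and reduces the problem to approximating $\Psi^{-1}\circ f:S\to|L|$ by an ${\mathcal S}^1$ map $h^*:S\to|L|$; one then sets $g:=\Psi\circ h^*$, which is ${\mathcal S}^1$ simply because both factors are, and has image in $\Psi(|L|)=f(S)$. The map $h^*$ is produced in two short steps: the simplicial approximation theorem turns $\Psi^{-1}\circ f\circ\Phi:|K|\to|L|$ (with $\Phi$ an \emph{ordinary} semialgebraic triangulation of $S$, Nash on open simplices) into a simplicial map $F^*$, and then a ``shrink--widen'' partition-of-unity lemma over Nash tubular retractions smooths the piecewise-Nash composite $F^*\circ\Phi^{-1}$ on $S$ while keeping each image locally inside a single simplex of $L$. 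No stratified chain rule, no \L ojasiewicz balancing, no inductive flattening.

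Your factorization $g=f\circ h$, by contrast, forces you to manufacture differentiability of the composite from a non-differentiable $f$, and the sketch does not close this. The difficulty is not just bookkeeping. In your transition annulus $N_i\setminus N_i'$ the map $h$ must interpolate between collapsing onto $\Sigma_i$ and the identity; there $h(x)$ lies in the higher stratum and $D(f\circ h)(x)=Df(h(x))\cdot Dh(x)$ involves the full (possibly blowing-up) derivative of $f$. You want the normal component of $Dh$ to vanish to high order at $\partial N_i'$ so that this product matches the purely tangential value from inside. For a single hypersurface stratum one can indeed check, via a Puiseux-type argument, that continuity of a semialgebraic $f$ forces the normal derivative to blow up with exponent strictly less than $1$, so a sufficiently flat interpolant works. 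But you must then iterate this over a flag of strata of every codimension, make the collapsing maps land in the (non-manifold) set $S$, say what ``tubular neighborhood of $h^{-1}(\Sigma_i)$ in $S$'' means, and preserve all earlier flatness conditions at each later stage---none of which is carried out. The paper's device of moving the ${\mathcal C}^1$ structure to the target via $\Psi$ dissolves every one of these issues at once.
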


For an arbitrary positive integer $\nu\geq2$ we obtain density results in two very significant situations we are going to present.

\subsubsection*{Second main result} 
Let us recall the definition of locally ${\mathcal S}^\nu$ polyhedral semialgebraic set, which represents a polyhedral counterpart of the concept of ${\mathcal S}^\nu$ manifold. Let $\nu\geq1$ be an integer. A~semialgebraic set $T\subset\R^n$ is called \em locally ${\mathcal S}^\nu$ equivalent to a polyhedron\em, or {\em locally ${\mathcal S}^\nu$~polyhedral} for short, if for each point $x\in T$ there exist two open semialgebraic neighborhoods $U_x$ and $V_x$ of $x$ in $\R^n$, a ${\mathcal S}^\nu$ diffeomorphism $\phi_x:U_x\to V_x$ and a compact polyhedron $Q$ of $\R^n$ such that $\phi_x(U_x\cap T)=V_x\cap Q$. The term \textit{compact polyhedron of $\R^n$} means the realization of a finite simplicial complex of $\R^n$ (see \cite[\S 2]{mu4}). The importance of locally ${\mathcal S}^\nu$~polyhedral semialgebraic sets is that if they are in addition compact, they admit ${\mathcal S}^\nu$ triangulations (see \S \ref{subsec:triangulation} below). 

Our second main result asserts that, if $\ell=0$, Theorem \ref{kr1} extends to the case in which the domain of definition $S$ is compact and the target space $T$ is an arbitrary locally ${\mathcal S}^\nu$ polyhedral compact semialgebraic set.

\begin{thm}\label{main2}
Let $S\subset\R^m$ and $T\subset\R^n$ be compact semialgebraic sets. If $T$ is locally ${\mathcal S}^\nu$~polyhedral for some integer $\nu\geq1$, then ${\mathcal S}^\nu(S,T)$ is dense in ${\mathcal S}^0(S,T)$. 
\end{thm}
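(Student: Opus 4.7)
I would first reduce to the case where $T$ is an actual compact polyhedron. Because $T$ is compact and locally $\mathcal{S}^\nu$-polyhedral, the ${\mathcal C}^1$-triangulation theorem of Ohmoto--Shiota (referenced in the abstract), combined with the local $\mathcal{S}^\nu$-polyhedral hypothesis --- which upgrades simplexwise regularity from ${\mathcal C}^1$ to ${\mathcal C}^\nu$ --- yields a compact simplicial complex $K$ in some $\R^p$ and a semialgebraic homeomorphism $\Phi\colon |K|\to T$ whose restriction to each closed simplex of $K$ is a $\mathcal{S}^\nu$-embedding that extends to a $\mathcal{S}^\nu$-diffeomorphism between open semialgebraic neighborhoods (and similarly for $\Phi^{-1}$). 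Post-composition with $\Phi$ and $\Phi^{-1}$ is continuous on the ${\mathcal S}^0$-topology and sends $\mathcal{S}^\nu$ to $\mathcal{S}^\nu$, so the statement for $T$ follows from the corresponding density statement with $T$ replaced by $|K|$.

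\textbf{Intermediate $\mathcal{S}^1$-approximation.} Given $f\in \mathcal{S}^0(S,|K|)$ and $\veps>0$, I would apply Theorem \ref{thm:C1} to produce $h\in \mathcal{S}^1(S,\R^p)$ with $h(S)\subset f(S)\subset |K|$ and $\|h(x)-f(x)\|_p<\veps/2$ for all $x\in S$. The map $h$ already takes values in the polyhedron, so the remaining task is to $\veps/2$-approximate $h$, in the $\mathcal{S}^0$-topology, by some $g\in \mathcal{S}^\nu(S,|K|)$.

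\textbf{Simplex-by-simplex smoothing.} To build $g$ from $h$ I would invoke the ${\mathcal S}^\nu$-triangulation theorem a second time: after refining $K$ if necessary, choose a $\mathcal{S}^\nu$-triangulation $\Psi\colon |L|\to S$ that is compatible with the partition $\{h^{-1}(\mathrm{int}\,\sigma)\}_{\sigma\in K}$, so that every open simplex $\tau$ of $L$ has a well-defined carrier $\sigma(\tau)\in K$ with $h(\Psi(\mathrm{int}\,\tau))\subset \mathrm{int}\,\sigma(\tau)$. On the top-dimensional cells, $\mathrm{int}\,\sigma(\tau)$ is open in its affine span and a Nash approximation in the spirit of Theorem \ref{kr1} gives a $\mathcal{S}^\nu$ approximation with image still inside $\mathrm{int}\,\sigma(\tau)$. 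One then propagates the smoothing to lower-dimensional strata by induction on the codimension of the skeleton of $L$, at each stage blending the already-constructed $\mathcal{S}^\nu$ map with a new local smoothing through a $\mathcal{S}^\nu$-partition of unity on $S$, and correcting the result by a local retraction onto the closed star of the relevant carrier simplex of $K$.

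\textbf{Main obstacle.} The crux is the simultaneous design of these local star-retractions and of the blending procedure. An individual simplex $\sigma$ with corners admits no $\mathcal{S}^\nu$-retraction from an ambient Euclidean neighborhood onto $\sigma$ when $\nu\geq 1$ (this is exactly what Example \ref{exa:no-retraction} is meant to show), so one is forced to work with the open star of $\sigma$ rather than with $\sigma$ itself, and to carry out the inductive smoothing in a strict order compatible with the face relation, so that every blended piece has its support contained in the appropriate star and the smoothed map never leaves $|K|$. The combinatorial bookkeeping required for this, together with the necessity of preserving membership in $|K|$ after each step, is precisely what the ``new approximation techniques'' advertised in the abstract must provide, and I expect this to be the most technically demanding part of the argument.
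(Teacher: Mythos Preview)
Your proposal has two genuine gaps, and the paper's argument avoids both by a different mechanism.

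\textbf{The reduction step does not work as stated.} Theorem~\ref{Sr-polyhedral} only gives a semialgebraic homeomorphism $\Psi:|L|\to T$ whose restriction to each \emph{closed simplex} $\xi$ lies in $\mathcal{S}^\nu(\xi,T)$; it does \emph{not} assert that $\Psi\in\mathcal{S}^\nu(|L|,T)$ globally, nor that $\Psi^{-1}$ is simplexwise $\mathcal{S}^\nu$. Consequently post-composition with $\Psi$ does not carry $\mathcal{S}^\nu(S,|L|)$ into $\mathcal{S}^\nu(S,T)$: if $g:S\to|L|$ is $\mathcal{S}^\nu$ and its image crosses a face of $L$, then $\Psi\circ g$ need not be $\mathcal{S}^\nu$ at the preimage of that face. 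The paper never makes this reduction. Instead it arranges that the approximating map $h^*:S\to|L|$ sends an \emph{open neighborhood} $W_{\sft}$ of each piece $\sft$ of $S$ into a \emph{single} closed simplex $\xi_{\sft}\in L$; then $(\Psi\circ h^*)|_{W_{\sft}}=\Psi|_{\xi_{\sft}}\circ h^*|_{W_{\sft}}$ is a composition of $\mathcal{S}^\nu$ maps, and this is what makes $H:=\Psi\circ h^*$ globally $\mathcal{S}^\nu$.

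\textbf{No $\mathcal{S}^\nu$-triangulation of $S$ is available.} In your Step~3 you invoke a $\mathcal{S}^\nu$-triangulation of $S$, but $S$ is an \emph{arbitrary} compact semialgebraic set with no local polyhedrality hypothesis, so Theorem~\ref{Sr-polyhedral} does not apply to it. The paper uses only the ordinary semialgebraic triangulation (Theorem~\ref{semialg-triangulation}) for $S$, which gives Nash diffeomorphisms on \emph{open} simplices only.

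\textbf{What the paper actually does.} The missing idea that replaces your inductive smoothing is the combination of \emph{simplicial approximation} with the \emph{shrink--widen lemma} (Lemma~\ref{lem:approximation}). One first replaces $\Psi^{-1}\circ f\circ\Phi:|K|\to|L|$ by a nearby \emph{simplicial} map $F^*$ (after barycentric subdivision), so that $g^*:=F^*\circ\Phi^{-1}$ already sends each piece $\sft=\Phi(\sigma)$ into a single simplex $\xi_{\sft}$ and is Nash on each $\sft^0$. Lemma~\ref{lem:approximation} then builds the $\mathcal{S}^\nu$ approximation in one shot: cover $S$ by ``widenings'' $U_{\sfs^0}$ of ``shrinkings'' $V_{\sfs^0}\subset\sfs^0$ equipped with Nash retractions $r_{\sfs^0}:U_{\sfs^0}\to V_{\sfs^0}$, and set $h^*:=\sum_{\sfs^0}\theta_{\sfs^0}\cdot(g^*\circ r_{\sfs^0})$ for a subordinate $\mathcal{S}^\nu$ partition of unity. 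Because each $g^*(r_{\sfs^0}(x))$ lies in the convex set $\xi_{\sft}$ whenever $x$ is near $\sft$, the convex combination stays in $\xi_{\sft}$, and no star-retractions or skeleton-by-skeleton induction are needed.
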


As an immediate consequence we obtain:

\begin{cor}\label{cor:PQ}
Let $S\subset\R^m$ be a compact semialgebraic set and let $T\subset\R^n$ be a compact polyhedron. Then ${\mathcal S}^\nu(S,T)$ is dense in ${\mathcal S}^0(S,T)$ for each integer $\nu\geq1$.
\end{cor}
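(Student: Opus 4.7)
The plan is to obtain Corollary \ref{cor:PQ} as a direct application of Theorem \ref{main2}, by verifying that every compact polyhedron $T\subset\R^n$ trivially satisfies the defining property of being locally $\mathcal{S}^\nu$ polyhedral for every integer $\nu\geq 1$. Once this is done, the conclusion is immediate.

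Concretely, fix $\nu\geq 1$ and an arbitrary point $x\in T$. I would choose $U_x:=V_x$ to be any open semialgebraic neighborhood of $x$ in $\R^n$ --- for instance an open Euclidean ball centered at $x$, which is semialgebraic. I take $\phi_x:=\operatorname{Id}_{U_x}\colon U_x\to V_x$, which is a Nash, and hence $\mathcal{S}^\nu$, diffeomorphism for every $\nu$. Finally, I set $Q:=T$, which is itself a compact polyhedron of $\R^n$ by hypothesis. With these choices,
\[
\phi_x(U_x\cap T)=U_x\cap T=V_x\cap Q,
\]
which is precisely the defining condition of local $\mathcal{S}^\nu$ polyhedrality at $x$. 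Since $x\in T$ was arbitrary, $T$ is locally $\mathcal{S}^\nu$ polyhedral.

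Applying Theorem \ref{main2} to the compact semialgebraic set $S$ and the compact locally $\mathcal{S}^\nu$ polyhedral set $T$ then yields that $\mathcal{S}^\nu(S,T)$ is dense in $\mathcal{S}^0(S,T)$, which is the statement of the corollary. There is no genuine obstacle here: the corollary is a tautological specialization of Theorem \ref{main2}, in which the local polyhedral model is globally provided by $T$ itself and the local chart is the identity. All the substantive work --- the invocation of the Ohmoto--Shiota $\mathcal{C}^1$-triangulation and the new approximation techniques of the paper --- is absorbed into the proof of Theorem \ref{main2} proper and does not resurface at this step.
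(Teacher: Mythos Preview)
Your argument is correct and matches the paper's treatment: the corollary is presented there as an immediate consequence of Theorem~\ref{main2}, and your verification that a compact polyhedron is trivially locally $\mathcal{S}^\nu$ polyhedral (via the identity chart and $Q:=T$) is exactly the observation that justifies this.
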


Theorem \ref{main2} and Corollary \ref{cor:PQ} are sharp in the sense that they are false if the approxi\-mating maps are Nash, that is, if $\nu=\infty$.

\begin{exa}\label{sameexample}
Let $S:=[-1,1]$ and $T:=\{(x,y)\in[-2,2]\times\R:\,x^2-y^2=0\}$. Observe that $T$ is a compact polyhedron and $T':=T\cap ((-2,2)\times\R)$ is a Nash set whose irreducible components are $T'_\pm:=\{(x,y)\in(-2,2)\times\R:\,x\pm y=0\}$. Consider the ${\mathcal S}^0$ map 
\[
f:S\to T'\subset T,\ t\mapsto(t,|t|)
\] 
and suppose there exists a Nash map $g:S\to T$ close to $f$ in the ${\mathcal S}^0$ topology of ${\mathcal S}^0(S,T)$. Since $g$ is close to $f$ and $f(S)\subset T'$, we may assume $g(S)\subset T'$. As $S$ is an irreducible semialgebraic set in the sense of \cite[\S3]{fg}, the image $g(S)\subset T'$ must be an irreducible semialgebraic set \cite[3.1(iv)]{fg}, so $g(S)$ must be contained either in $T_+'$ or in $T_-'$, which is impossible because $g$ is close to $f$ and $\dim(f(S)\cap T_\pm')=1$. This proves that ${\mathcal N}(S,T)$ is not dense in ${\mathcal S}^0(S,T)$. $\sqbullet$
\end{exa}

A by-product of the argument we will use to prove Theorem \ref{main2} is the following. 

\begin{cor}\label{cor:KP}
Let $K$ be a finite simplicial complex of $\R^p$ and let $P\subset\R^p$ be the corresponding compact polyhedron $|K|$. Then, for each integer $\nu\geq1$, there exists a sequence $\{\iota^\nu_n\}_{n\in\N}$ in ${\mathcal S}^\nu(P,P)$ with the following universal property: if $f$ is a real-valued function in ${\mathcal S}^0(P)$ such that $f|_{\sigma}\in {\mathcal S}^\nu(\sigma)$ for each $\sigma\in K$, then the sequence $\{f\circ\iota^\nu_n\}_{n\in\N}$ is contained in ${\mathcal S}^\nu(P)$ and converges to $f$ in ${\mathcal S}^0(P)$. In particular, the sequence $\{\iota^\nu_n\}_{n\in\N}$ converges to the identity map in ${\mathcal S}^0(P,P)$.
\end{cor}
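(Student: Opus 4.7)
The plan is to extract the sequence $\{\iota^\nu_n\}$ as the ``skeleton-flattening'' self-map of $P$ that underlies the argument for Theorem \ref{main2}. First choose a ${\mathcal C}^\nu$ semialgebraic non-decreasing function $\chi_n:[0,1]\to[0,1]$ that vanishes identically on $[0,1/(2n)]$ and coincides with the identity on $[1/n,1]$ (so $\chi_n$ is $\nu$-flat at $0$). On each simplex $\sigma=[v_0,\dots,v_d]\in K$, expressed in barycentric coordinates $(\lambda_0,\dots,\lambda_d)$, define
\[
\iota^\nu_n\Bigl(\sum_{i=0}^{d}\lambda_i v_i\Bigr):=\sum_{i=0}^{d}\frac{\chi_n(\lambda_i)}{\sum_{j=0}^{d}\chi_n(\lambda_j)}\,v_i,
\]
which is legitimate for $n$ large enough, since some $\lambda_i\geq 1/(d+1)$ forces the denominator to be strictly positive. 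Because $\chi_n(0)=0$, the formulas on two simplices meeting at a face $\tau$ agree on $\tau$, so $\iota^\nu_n:P\to P$ is a well-defined continuous semialgebraic self-map that converges uniformly to $\id_P$ as $n\to\infty$.

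The decisive geometric property is the following: on a sufficiently thin collar of any proper face $\tau$ of any simplex $\sigma\in K$, the map $\iota^\nu_n$ takes values inside $\tau$ itself and is constant in every direction transverse to $\tau$ (the barycentric coordinates of the vertices of $\sigma$ not lying in $\tau$ are fed to $\chi_n$, which is identically zero together with all its derivatives up to order $\nu$ on a neighborhood of $0$). Hence the local formulas read from any two adjacent simplices have identical $\nu$-jets along their common face, with all normal derivatives vanishing. An inductive Whitney-type extension argument applied to the faces of $K$ in increasing codimension then produces a single ${\mathcal C}^\nu$ semialgebraic extension of $\iota^\nu_n$ to an open neighborhood of $P$ in $\R^p$, proving $\iota^\nu_n\in{\mathcal S}^\nu(P,P)$.

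For the universal property, let $f\in{\mathcal S}^0(P)$ with $f|_\sigma\in{\mathcal S}^\nu(\sigma)$ for each $\sigma\in K$, and pick ${\mathcal C}^\nu$ semialgebraic extensions $F_\tau$ of $f|_\tau$ on open neighborhoods of each face $\tau$. On the thin collar of $\tau$ inside any $\sigma\supset\tau$ we have $f\circ\iota^\nu_n=F_\tau\circ\iota^\nu_n$, because $\iota^\nu_n$ maps that collar into $\tau$. The same flatness reasoning shows that the local representatives of $f\circ\iota^\nu_n$ on adjacent simplices share a common $\nu$-jet along their common face, so a second Whitney-type extension yields $f\circ\iota^\nu_n\in{\mathcal S}^\nu(P)$; combining $\iota^\nu_n\to\id_P$ uniformly with the uniform continuity of $f$ on the compact set $P$ gives $f\circ\iota^\nu_n\to f$ in ${\mathcal S}^0(P)$. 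The main obstacle is the consistent Whitney-type gluing across the entire face poset of $K$ while retaining semialgebraicity: the $\nu$-flatness of $\chi_n$ at $0$ kills every normal jet, but orchestrating this matching simultaneously over all faces of all codimensions is the delicate point, and it is precisely the bookkeeping already carried out in the proof of Theorem \ref{main2}, from which the corollary then follows at once.
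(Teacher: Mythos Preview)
Your construction does not produce a map in ${\mathcal S}^\nu(P,P)$, and the claim that $\iota^\nu_n$ is ``constant in every direction transverse to $\tau$'' is false. Take $P=\sigma\cup\sigma'\subset\R^2$ with $\sigma=[v_0,v_1,v_2]$, $\sigma'=[v_0,v_1,v_2']$, $v_0=(0,0)$, $v_1=(1,0)$, $v_2=(0,1)$, $v_2'=(0,-1)$, and common edge $\tau=[v_0,v_1]$. In the collar $\{0<y<1/(2n)\}$ of $\tau$ inside $\sigma$ your formula gives the first component
\[
g(x,y)=\frac{\chi_n(x)}{\chi_n(1-x-y)+\chi_n(x)},
\]
because $\lambda_0=1-x-y$, $\lambda_1=x$, $\lambda_2=y$ and $\chi_n(y)=0$; this genuinely depends on $y$ through $\lambda_0$. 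On the $\sigma'$ side the same computation yields $g(x,y)=\chi_n(x)/(\chi_n(1-x+y)+\chi_n(x))$. The one-sided $y$-derivatives at $y=0$ are $\pm\,\chi_n(x)\chi_n'(1-x)/(\chi_n(1-x)+\chi_n(x))^2$, which are nonzero with opposite signs at $x=1/2$ for any $n\geq 3$. Hence your $\iota^\nu_n$ does not extend even ${\mathcal C}^1$ across $\tau$, so $\iota^\nu_n\notin{\mathcal S}^1(P,P)$. The point you missed is that the constraint $\sum_i\lambda_i=1$ forces the barycentric coordinates of the vertices \emph{in} $\tau$ to move when you move transversally, so killing the $\chi_n(\lambda_j)$ for $j\notin\tau$ does not freeze the map. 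No Whitney gluing can repair this: the jets from the two sides simply disagree.

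Your appeal to ``the bookkeeping already carried out in the proof of Theorem~\ref{main2}'' is also misplaced, since that proof uses neither barycentric formulas nor jet matching. The paper obtains $\iota^\nu_n$ by applying Lemma~\ref{lem:approximation} to $S=P$, $\Phi=\id_P$, $L=K$, $g=\id_P$, $\eta=2^{-n}$: one first builds a `shrink--widen' open cover $\{U_{\sigma^0}\}$ of $P$ with Nash retractions $r_{\sigma^0}:U_{\sigma^0}\to V_{\sigma^0}\subset\sigma^0$ coming from tubular neighborhoods in the ambient $\R^p$ (Lemma~\ref{lem:covering}), and then sets $\iota^\nu_n:=\sum_{\sigma^0}\theta_{\sigma^0}\cdot r_{\sigma^0}$ for a ${\mathcal S}^\nu$ partition of unity $\{\theta_{\sigma^0}\}$ subordinate to that cover. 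Each summand is already ${\mathcal S}^\nu$ on $P$ by construction, so $\iota^\nu_n\in{\mathcal S}^\nu(P,P)$ with no gluing needed. The lemma also yields open neighborhoods $W_\sigma\supset\sigma$ with $\iota^\nu_n(W_\sigma)\subset\sigma$; then $(f\circ\iota^\nu_n)|_{W_\sigma}=f|_\sigma\circ\iota^\nu_n|_{W_\sigma}$ is a composition of ${\mathcal S}^\nu$ maps, giving $f\circ\iota^\nu_n\in{\mathcal S}^\nu(P)$ directly.
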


\subsubsection*{Third main result} Consider the compact algebraic curve $T:=\{y^2-x^3(1-x)=0\}\subset\R^2$. It has a cusp at the origin, so it is not locally ${\mathcal S}^1$ polyhedral. Thus, Theorem~\ref{main2} does not apply if $T$ is the target space. However, Problem \ref{prob1} continues to have an affirmative solution if $\ell=0$ and $S$ is compact because $T$ is a Nash set. More precisely, we are able to prove the following result.

\begin{thm}\label{main1}
Let $S\subset\R^m$ be a compact semialgebraic set and let $T$ be an open semialgebraic subset of a Nash set $Y\subset\R^n$. Then ${\mathcal S}^\nu(S,T)$ is dense in ${\mathcal S}^0(S,T)$ for each integer $\nu\geq1$.
\end{thm}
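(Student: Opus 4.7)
The plan is to reduce the problem to an approximation problem with target a Nash manifold (where Theorem~\ref{kr1} applies) by factoring through a resolution of singularities of $Y$.

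First, since $S$ is compact and $T$ is open in $Y$, the image $K:=f(S)$ is a compact subset of $T$, so there exists $\delta>0$ such that every continuous semialgebraic map $g:S\to Y$ with $\sup_{x\in S}\|g(x)-f(x)\|_n<\delta$ satisfies $g(S)\subset T$. It therefore suffices to produce $\mathcal{S}^\nu$ approximations of $f$ with values in $Y$.

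Next, invoking Hironaka-type resolution of singularities for Nash sets, one obtains a Nash manifold $M$ together with a proper surjective Nash map $\pi:M\to Y$ that restricts to a Nash isomorphism over $\Reg(Y)$. The strategy is then to (i) lift $f$ to a continuous semialgebraic map $\tilde{f}:S\to M$ satisfying $\pi\circ\tilde{f}=f$, (ii) apply Theorem~\ref{kr1} to approximate $\tilde{f}$ by a map $\tilde{g}\in\mathcal{S}^\nu(S,M)$, which is legitimate because $M$ is a Nash manifold, and (iii) set $g:=\pi\circ\tilde{g}$. Step (iii) automatically yields $g\in\mathcal{S}^\nu(S,Y)$ because $\pi$ is Nash, and since $\pi$ is uniformly continuous on any compact set containing $\tilde{f}(S)\cup\tilde{g}(S)$, $g$ will be $\mathcal{S}^0$-close to $\pi\circ\tilde{f}=f$; by the first reduction this forces $g(S)\subset T$, completing the argument.

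The hard part is step (i): a literal continuous semialgebraic lift of $f$ need not exist in general, because at a point $x_0\in S$ with $f(x_0)\in\Sing(Y)$ the fibre $\pi^{-1}(f(x_0))$ may be positive-dimensional and the limit of $\pi^{-1}\circ f(x)$ as $x\to x_0$ can genuinely depend on the direction of approach (e.g.\ over a pinch point). To bypass this, I would first use Theorem~\ref{thm:C1} to replace $f$ by a close approximation $f_0\in\mathcal{S}^1(S,T)$ with $f_0(S)\subset f(S)$; the $\mathcal{S}^1$ directional data of $f_0$, together with a semialgebraic triangulation of $S$ adapted both to $f_0^{-1}(\Sing(Y))$ and to the strata of a Nash stratification of $Y$, should allow an inductive construction of $\tilde{f}_0$ stratum by stratum on $S$, choosing at each step a consistent branch of $\pi^{-1}$. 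The delicate point is ensuring that the branch choices match across adjacent strata of $S$ (especially over components of $\Sing(Y)$ where $\pi$ has positive-dimensional or disconnected fibres), which relies on the curve selection lemma and the properness of $\pi$; this is where I expect the bulk of the technical work to lie.
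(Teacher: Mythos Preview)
Your reduction to approximating with values in $Y$ is fine, and the idea of factoring through a resolution $\pi:M\to Y$ is natural, but step (i) is a genuine obstruction that the $\mathcal{S}^1$ upgrade via Theorem~\ref{thm:C1} does not remove. Take $Y=\{xy=0\}\subset\R^2$, whose normalization $M$ is the disjoint union of the two axes, and consider
\[
f_0:[-1,1]\to Y,\qquad f_0(t)=\big((\max(-t,0))^2,\ (\max(t,0))^2\big).
\]
This is a $\mathcal{S}^1$ semialgebraic map (each component is $\mathcal{C}^1$ and piecewise polynomial) that lies on the $x$-axis for $t\le 0$ and on the $y$-axis for $t\ge 0$. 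No continuous lift $\tilde{f}_0:[-1,1]\to M$ with $\pi\circ\tilde{f}_0=f_0$ exists, since $[-1,1]$ is connected while the two half-images are forced into different components of $M$. The $\mathcal{S}^1$ directional data at $t=0$ is $f_0'(0)=(0,0)$, so it carries no branch information; your proposed stratum-by-stratum branch selection has nothing to grab onto. The same phenomenon occurs over any normal-crossings point, and more drastically over non-normal-crossings singularities, so there is no reason to expect the inductive construction you sketch to close up.

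The paper sidesteps the lifting problem entirely rather than solving it. Following \cite{br}, one first realizes $Y$ (after an Artin--Mazur reduction to the algebraic case) as the \emph{singular locus} of an irreducible algebraic set $Z\subset\R^{n+2}$ that is semialgebraically homeomorphic to $\R^{n+1}$ via a projection $\psi$. One then resolves $Z$ (not $Y$) by $\phi:Z'\to Z$, so that $\phi^{-1}(\Sing(Z))$ is a normal-crossings divisor $Y'\subset Z'$. The key move is to perturb $f$ to $f'=(f,\tfrac{\veps'}{3}):S\to\R^{n+1}$, which lands in $\psi(Z\setminus\Sing(Z))$; now the lift $f''=(\phi|_{Z'\setminus Y'})^{-1}\circ\psi^{-1}\circ f'$ is automatic because $\phi$ is a diffeomorphism over the regular part. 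After approximating $f''$ by a Nash map $h_0$ into the Nash manifold $Z'$, one needs to return to $T'\subset Y'$: this is done not by a retraction (which does not exist in class $\mathcal{S}^1$, cf.\ Example~\ref{exa:no-retraction}) but by the $\mathcal{S}^\nu$ \emph{weak retraction} $\rho:W\to T'$ of Proposition~\ref{wr}, which is merely $\mathcal{S}^0$-close to the identity on $T'$. The composition $\psi\circ\phi\circ\rho\circ h_0$ then lands in $T\times\{0\}$ and is $\mathcal{S}^\nu$. The two ingredients you are missing are the embedding of $Y$ as a singular locus (which makes the lift trivial after a small perturbation) and the weak-retraction machinery of Section~\ref{s3} (which replaces the nonexistent $\mathcal{S}^\nu$ tubular neighborhood of $Y'$).
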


Theorem \ref{main1} is again sharp in the sense that it is false if the approximating maps are Nash, that is, if $\nu=\infty$: consider the ${\mathcal S}^0$ map $S\to T'$, $t\mapsto (t,|t|)$ of Example \ref{sameexample} and observe that it cannot be approximated by Nash maps between $S$ and $T'$.

\begin{rem}
In the statements of Theorems \ref{thm:C1}, \ref{main2} and \ref{main1} and of Corollary \ref{cor:PQ}, we may assume that the ${\mathcal S}^\nu$ approximating maps are ${\mathcal S}^0$ homotopic to the original ${\mathcal S}^0$ maps. This holds because in Theorem \ref{thm:C1} $f(S)$ is compact and in the remaining results $T$ is locally compact. Hence $f(S)$ and $T$ admit by \cite{dk} ${\mathcal S}^0$ tubular neighborhoods in $\R^n$.~$\sqbullet$
\end{rem}

\subsubsection*{Involved tools} 
The proofs of Theorems \ref{thm:C1} and \ref{main2} are based on the use of ${\mathcal S}^1$ triangulations of arbitrary compact semialgebraic sets \cite[Thm.1.1]{os} and ${\mathcal S}^\nu$ triangulations of locally ${\mathcal S}^\nu$ polyhedral compact semialgebraic sets \cite[Prop.I.3.13 \& Rmk.I.3.22]{sh2} combined with simplicial approximation of continuous maps between compact polyhedra \cite[Ch.2]{mu4} and with a `shrink-widen' approximation technique introduced in Section \ref{s2'}. Corollary \ref{cor:KP} is a consequence of such technique.

The proof of Theorem \ref{main1} involves the use of \em ${\mathcal S}^\nu$ weak retractions \em that are developed in Section \ref{s3}. Let $M\subset\R^m$ be a Nash manifold and let $X\subset M$ be a Nash normal-crossings divisor. Let $W\subset M$ be an open semialgebraic neighborhood of~$X$. ${\mathcal S}^\nu$ weak retractions are ${\mathcal S}^\nu$ maps $\rho:W\to X$ that are ${\mathcal S}^0$ close to the identity on~$X$. In Proposition \ref{wr} we prove the existence of ${\mathcal S}^\nu$ weak retractions. We combine this tool with a strategy employed in \cite[Lem.2.2]{br} that involves resolution of singularities. The use of ${\mathcal S}^\nu$ weak retractions instead of usual retractions is justified by the following example.

\begin{exa}\label{exa:no-retraction}
There exists no ${\mathcal S}^1$ retraction from a semialgebraic neighborhood $U$ of $T:=\{xy=0\}\subset\R^2$ onto $T$. Suppose that $\rho:U\to T$ is such a ${\mathcal S}^1$ retraction. As $\rho|_T$ is equal to the identity map $\id_T$ on $T$, we deduce that $d_0\rho=\id_{\R^2}$. Consequently, there exist open semialgebraic neighborhoods $U_1$ and $U_2$ of the origin in $\R^2$ such that the restriction $\rho|_{U_1}:U_1\to U_2 \subset T$ is a ${\mathcal S}^1$ diffeomorphism, which is a contradiction. $\sqbullet$
\end{exa}


\subsection*{Structure of the article}
All basic notions and preliminary results used in this paper are presented in Section \ref{s2}. The reader can proceed directly to Section \ref{s2'} and refer to Section~\ref{s2} when needed. In Section \ref{s2'} we describe our `shrink-widen' approximation technique and we prove Theorems \ref{thm:C1} and \ref{main2}, and Corollary \ref{cor:KP}. Section \ref{s3} is devoted to the proof of the existence of ${\mathcal S}^\nu$ weak retractions, that we use in Section \ref{s4} to prove Theorem \ref{main1}.


\section{Preliminaries}\label{s2}

In this section we introduce many concepts and notations needed in the article. We will employ the following conventions: $M\subset\R^m$ and $N\subset\R^n$ are either Nash manifolds or non-singular algebraic sets. Nash subsets of a Nash manifold or algebraic subsets of $\R^n$ are denoted with $X,Y,Z,\ldots$ The semialgebraic sets are denoted with $S,T,R,\ldots$ In addition, ${\mathcal S}^\nu$ and Nash functions on a semialgebraic set are denoted with $f,g,h,\ldots$

Recall some general properties of semialgebraic sets. Semialgebraic sets are closed under Boolean combinations and, by means of quantifier elimination, they are also closed under projections. Any set $S\subset\R^m$ defined by a first order formula in the language of ordered fields is a semialgebraic set \cite[pp.\hspace{2pt}28,\hspace{2pt}29]{bcr}. Thus, the basic topological constructions as the closure of $S$, the interior of $S$ and the boundary of $S$ in $\R^m$ (denoted by $\cl(S)$, $\Int(S)$ and $\partial S$ respectively) are semialgebraic if $S$ is. Also images and preimages of semialgebraic sets by semialgebraic maps are again semialgebraic. The \em dimension \em $\dim(S)$ of a semialgebraic set $S$ is the dimension of its Zariski closure in $\R^m$ \cite[\S2.8]{bcr}. The \em local dimension $\dim(S_x)$ of $S$ at a point $x\in\cl(S)$ \em is the dimension of $U\cap S$ for a small enough open semialgebraic neighborhood $U$ of $x$ in $\R^m$. The dimension of $S$ coincides with the maximum of these local dimensions. For any fixed integer $k$ the set of points $x\in S$ such that $\dim(S_x)=k$ is a semialgebraic subset of $S$. If the function $S \to \N,\ x \mapsto \dim(S_x)$ is constant, then $S$ is said to be \textit{pure dimensional}. 


\subsection{Simplicial approximation and ${\mathcal S}^\nu$ triangulations} \label{subsec:triangulation}

Let $K$ be a finite simplicial complex of $\R^p$. Given any simplex $\sigma\in K$, we denote $\mathrm{Bd}(\sigma)$ the \textit{relative boundary of $\sigma$} defined as the union of proper faces of $\sigma$ and $\sigma^0:=\sigma\,\setminus\, \mathrm{Bd}(\sigma)$ the \textit{relative interior of $\sigma$}, which is equal to the set of points of $\sigma$ whose barycentric coordinates are all strictly positive. The sets $\sigma^0$ are called \textit{open simplexes of $K$}. We indicate $|K|$ the subset $\bigcup_{\sigma\in K}\sigma$ of $\R^p$ equipped with the topology inherited from the Euclidean one of~$\R^p$. Let $K_*$ be the set of vertices of $K$ and for each $v\in K_*$ let $\mathrm{Star}(v,K)$ be the star of $v$ in $K$, that is, the open neighborhood $\bigcup_{\sigma\in K,v\in\sigma}\sigma^0$ of $v$ in~$|K|$. For each positive integer $k$ denote $K^{(k)}$ the $k^{\mathrm{th}}$-iterated barycentric subdi\-vision of $K$. If we fix $\veps>0$ and pick $k$ large enough, then every simplex $\tau\in K^{(k)}$ has diameter $\mathrm{diam}_p(\tau):=\max_{x,y\in\tau}\{\|x-y\|_p\}<\veps$ (see \cite[Thm.15.4]{Mu}).

Let $L$ be a finite simplicial complex of some $\R^q$ and let $g:K_*\to L_*$ be a map between the sets of vertices of $K$ and $L$ satisfying the following condition: \em if $v_1,\ldots,v_r$ are vertices of $K$ that span a simplex of $K$, then $g(v_1),\ldots,g(v_r)$ are vertices of $L$ that span a simplex of $L$\em. Then, $g$ extends uniquely to a continuous map from $|K|$ to $|L|$ whose restriction to each simplex $\sigma$ of $K$ is the restriction to $\sigma$ of an affine map $\R^p\to\R^q$. We denote this extension again $g:|K|\to|L|$ and we say that $g$ is a \em simplicial map\em. Let $f:|K|\to|L|$ be a continuous map. A simplicial map $g:|K|\to|L|$ is called a \em simplicial approximation of~$f$ \em if $f(\mathrm{Star}(v,K))\subset\mathrm{Star}(g(v),L)$ for each vertex $v\in K_*$. If $g$ is a simplicial approximation of $f$, then for each $x\in |K|$ there exists $\xi_x\in L$ such that $\{g(x),f(x)\}\subset\xi_x$, so $\|g(x)-f(x)\|_q\leq\mathrm{diam}_q(\xi_x)$ (see \cite[Cor.14.2]{Mu}). The finite simplicial approximation theorem \cite[Thm.16.1]{Mu} assures that: \em given a continuous map $f:|K|\to|L|$, there exists a positive integer $k$ and a simplicial approximation $g:|K^{(k)}|\to|L|$ of $f$\em. 

\begin{thm}[{\cite[\S 14, 15 \& 16]{Mu}}] \label{s-a}
Let $K$ and $L$ be two finite simplicial complexes and let $f:|K|\to|L|$ be a continuous map. Suppose $|L|\subset\R^q$. Then, for each $\veps>0$, there exist two positive integers $k,\ell$ and a simplicial map $g:|K^{(k)}|\to|L^{(\ell)}|$ such that $\|g(x)-f(x)\|_q<\veps$ for each $x\in|K^{(k)}|=|K|$. 
\end{thm}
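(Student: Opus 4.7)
The plan is to reduce the statement to the finite simplicial approximation theorem already quoted in the excerpt, using the fact that iterated barycentric subdivision of $L$ shrinks the diameter of its simplexes without changing the underlying polyhedron. The key observation is that the diameter bound $\|g(x)-f(x)\|_q\leq\mathrm{diam}_q(\xi_x)$ valid for any simplicial approximation is only useful if the target complex has small enough simplexes; so I would first make the target finer, and only afterwards subdivide the source.

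More precisely, given $\veps>0$, I would first choose a positive integer $\ell$ so large that every simplex $\xi$ of $L^{(\ell)}$ satisfies $\mathrm{diam}_q(\xi)<\veps$; the existence of such $\ell$ is recalled in Subsection~\ref{subsec:triangulation} via \cite[Thm.15.4]{Mu}. Since $|L^{(\ell)}|=|L|$ as subsets of $\R^q$, the same map $f$ can be regarded as a continuous map $f:|K|\to|L^{(\ell)}|$. At this point I invoke the finite simplicial approximation theorem, which asserts the existence of a positive integer $k$ and a simplicial approximation $g:|K^{(k)}|\to|L^{(\ell)}|$ of $f$ in the sense that $f(\mathrm{Star}(v,K^{(k)}))\subset\mathrm{Star}(g(v),L^{(\ell)})$ for every vertex $v$ of $K^{(k)}$.

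It then remains to turn the simplicial approximation property into the uniform estimate. By the standard property of simplicial approximations recalled in the excerpt, for each $x\in|K^{(k)}|=|K|$ there exists a simplex $\xi_x\in L^{(\ell)}$ with $\{g(x),f(x)\}\subset\xi_x$, and therefore
\[
\|g(x)-f(x)\|_q\leq\mathrm{diam}_q(\xi_x)<\veps,
\]
by the choice of $\ell$. This gives exactly the desired $g:|K^{(k)}|\to|L^{(\ell)}|$ with the required uniform bound.

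There is no substantial obstacle here: the entire content has been absorbed into the two inputs quoted from Munkres, namely the diameter estimate for iterated barycentric subdivision and the finite simplicial approximation theorem. The only minor point one has to be careful about is the order of subdivisions: one must refine the target complex \emph{first} (to control $\mathrm{diam}_q(\xi_x)$), and then refine the source, since further barycentric subdivisions of $K$ do not affect the diameter bound in the target, whereas the number of subdivisions $k$ needed in the source generally grows with $\ell$.
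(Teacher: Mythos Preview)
Your proposal is correct and follows essentially the same approach as the paper: first subdivide $L$ to make all simplexes of $L^{(\ell)}$ have diameter less than $\veps$, then apply the finite simplicial approximation theorem to $f:|K|\to|L^{(\ell)}|$, and conclude via the bound $\|g(x)-f(x)\|_q\leq\mathrm{diam}_q(\xi_x)$. The paper's proof is more terse but structurally identical.
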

\begin{proof}
Choose an integer $\ell\geq1$ such that $\mathrm{diam}_q(\xi)<\veps$ for each $\xi\in L^{(\ell)}$. Now, apply the finite simplicial approximation theorem to the continuous function $f:|K|\to|L^{(\ell)}|=|L|$ and the proof is concluded.
\end{proof}

In the semialgebraic setting we have the following triangulation result, see \cite[Thm.9.2.1 \& Rmk.9.2.3.a)]{bcr}.

\begin{thm}\label{semialg-triangulation}
Given any compact semialgebraic set $S\subset\R^m$, there exist a finite simplicial complex $K$ and a semialgebraic homeomorphism $\Phi:|K|\to S$. In addition, for each $\sigma\in K$ the set $\Phi(\sigma^0)\subset\R^m$ is a Nash manifold and the restriction $\Phi|_{\sigma^0}:\sigma^0\to \Phi(\sigma^0)$ is a Nash diffeomorphism.
\end{thm}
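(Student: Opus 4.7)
The plan is to proceed by induction on the ambient dimension $m$, with the base case $m=0$ being trivial. For the inductive step, I would combine a cylindrical algebraic decomposition (CAD) of $\R^m$ adapted to $S$ with a Nash-regularity enhancement via a suitable change of coordinates in the last variable.

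First, after a generic linear change of coordinates in $\R^m$ (a so-called "good direction" for $S$, whose existence follows from a standard dimension-count of projections), I would assume the projection $\pi\colon\R^m\to\R^{m-1}$ onto the first $m-1$ coordinates is in general position with respect to $S$. Quantifier elimination then yields a finite family of semialgebraic functions $\xi_1<\xi_2<\cdots<\xi_k$ defined on semialgebraic pieces of $\R^{m-1}$, whose graphs together with the open bands between consecutive graphs produce a cylindrical decomposition of $\R^m$ compatible with $S$: every cell is a Nash submanifold of $\R^m$ and $S$ is a finite union of such cells. After refining, the good-direction assumption ensures each $\xi_i$ is analytic, hence Nash, on the (open part of the) cell on which it is defined.

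Next, the inductive hypothesis applied to the compact semialgebraic set $\pi(S)\subset\R^{m-1}$ yields a finite simplicial complex $L$ and a semialgebraic homeomorphism $\Psi\colon|L|\to\pi(S)$ restricting to a Nash diffeomorphism on each open simplex onto its Nash-manifold image, and furthermore compatible in the sense that the domains of definition of the $\xi_i$ in $\pi(S)$ are unions of $\Psi(\tau^0)$ for $\tau\in L$ (this compatibility is achieved by adding the boundary data of the $\xi_i$ to the semialgebraic sets we triangulate). I would then build $K$ over $L$ by a standard "cone and cylinder" construction: over each closed simplex $\tau\in L$ and each band $\xi_i<y<\xi_{i+1}$ available on $\tau^0$, one introduces the graph points $\Psi(v,\xi_i\circ\Psi(v))$ for vertices $v$ of $\tau$ and, after a barycentric subdivision in the fiber direction, triangulates the resulting prism into simplices joining the upper and lower graphs. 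The map $\Phi\colon|K|\to S$ is then the unique simplicial lift whose restriction to each vertical edge is affine and whose projection to $\R^{m-1}$ agrees with $\Psi$.

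The verification breaks into three items: (i) $\Phi$ is a well-defined semialgebraic homeomorphism, which follows from the fact that the cylindrical decomposition is a stratification of $S$ and the prism triangulation is compatible along shared faces; (ii) the image $\Phi(\sigma^0)$ of each open simplex lies inside one cell of the CAD and is therefore a Nash manifold; and (iii) on each $\sigma^0$ the map $\Phi$ is a Nash diffeomorphism onto its image — this is where the proof really uses Nashness of the $\xi_i$, because the fiber coordinate of $\Phi$ is an affine combination (in the barycentric coordinates of $\sigma$) of values of the $\xi_i\circ\Psi$, and composition of Nash maps with a Nash diffeomorphism on the base $\tau^0$ remains Nash. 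The main obstacle I expect is step (iii): the naive "stack of prisms" gives only piecewise-linear behavior in the vertical direction at the boundary faces of each $\sigma$, so I must verify carefully that on the \emph{open} simplex $\sigma^0$ the formula for $\Phi$ is genuinely a Nash diffeomorphism, and that the Jacobian is non-degenerate; here the good-direction assumption (ensuring $\xi_i<\xi_{i+1}$ strictly on the open base cell) is crucial to exclude collapse of the fiber.
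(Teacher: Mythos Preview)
The paper does not give its own proof of this statement; it is quoted as a known result with the citation \cite[Thm.9.2.1 \& Rmk.9.2.3.a)]{bcr}. Your sketch is precisely the standard argument from that reference: induction on the ambient dimension via a cylindrical algebraic decomposition after a good-direction change of coordinates, triangulation of the base by induction, and a prism-triangulation lift to the cylinders. So there is no alternative approach to compare; you are reconstructing the cited proof.

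There is, however, one structural gap in your induction. The inductive hypothesis you invoke---a triangulation of the single compact set $\pi(S)$---is too weak to deliver the compatibility you need in the next paragraph (that the domains of the $\xi_i$ and their incidence loci are unions of images $\Psi(\tau^0)$). For the induction to close, the statement being proved must already be the \emph{relative} version: given a compact semialgebraic set $S$ together with finitely many semialgebraic subsets $S_1,\ldots,S_r$, there is a semialgebraic triangulation of $S$ in which each $S_j$ is a union of images of open simplices. This is exactly how \cite[Thm.9.2.1]{bcr} is stated, and it is not optional: without it you cannot force the base triangulation to respect the CAD cells and their closures, and the prism lift breaks down at cell boundaries. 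Your parenthetical ``adding the boundary data of the $\xi_i$'' shows you see the issue, but you must strengthen the theorem you are inducting on, not just the input to one step.

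Apart from that, your outline is correct. In particular your item (iii) is fine: the good-direction hypothesis forces the section functions $\xi_i$ to be simple roots of a polynomial in the last variable over each open base cell, hence Nash there by the implicit function theorem; the vertical coordinate of $\Phi$ on an open simplex is then an affine interpolation of Nash functions with strictly separated values, so the differential is invertible on $\sigma^0$ as you say.
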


Recently Ohmoto and Shiota proved a remarkable global ${\mathcal S}^1$ version of the latter theorem. We state their result in the compact case only, see \cite[Thm.1.1]{os}.

\begin{thm}\label{C1}
Given any compact semialgebraic set $T\subset\R^n$, there exist a finite simplicial complex $L$ and a semialgebraic homeomorphism $\Psi:|L|\to T$ such that $\Psi\in {\mathcal S}^1(|L|,T)$.
\end{thm}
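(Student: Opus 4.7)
The plan is to upgrade the classical semialgebraic triangulation of Theorem~\ref{semialg-triangulation} to one of class $\mathcal{S}^1$ by a stratum-by-stratum smoothing procedure. First I would apply Theorem~\ref{semialg-triangulation} to obtain a finite simplicial complex $K$ and a semialgebraic homeomorphism $\Phi_0:|K|\to T$ whose restriction to each open simplex $\sigma^0$ is a Nash diffeomorphism onto its image. The remaining obstacle is that the differentials $d(\Phi_0|_{\sigma^0})_x$ may fail to admit a continuous extension across the interface between two adjacent simplices, and may even blow up as $x$ approaches a proper face of $\sigma$.

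Second, I would refine the triangulation (if needed, via iterated barycentric subdivision and a small perturbation of the vertices) until the images $\Phi_0(\sigma^0)$ form a Whitney $(b)$-regular semialgebraic stratification of $T$ by Nash strata. On the pairs of adjacent strata this yields semialgebraic control data in the sense of Mather--Thom, namely semialgebraic tubular neighborhoods consisting of a Nash retraction $\pi_\sigma$ onto $\Phi_0(\sigma^0)$ and a squared-distance function $\rho_\sigma$, with the compatibility $\pi_\sigma\circ\pi_\tau=\pi_\sigma$ and $\rho_\sigma\circ\pi_\tau=\rho_\sigma$ satisfied in a neighborhood of the lower stratum.

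Third, I would progressively modify $\Phi_0$ into maps $\Phi_1,\Phi_2,\ldots,\Phi_m$ (with $m=\dim|K|$) by induction on the dimension $k$ of strata. Starting from the $0$-skeleton, at stage $k$ one uses the control data together with a semialgebraic bump function of $\rho_\sigma$ to interpolate between $\Phi_{k-1}$ (kept untouched near the $(k-1)$-skeleton) and a locally constructed $\mathcal{S}^1$ model map obtained by composing the tubular projection with a Nash straightening of $\Phi_0$ on the $k$-dimensional strata. At the end of the induction, a further passage to an iterated barycentric subdivision $L$ and a reparametrization of $|L|$ by a simplicial homeomorphism yields the desired $\Psi\in\mathcal{S}^1(|L|,T)$.

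The hard part will be the inductive step: one has to arrange the interpolation near a $k$-dimensional stratum so that the $\mathcal{S}^1$ regularity already achieved in a neighborhood of the $(k-1)$-skeleton is preserved, while simultaneously taming the possible blow-up of $d\Phi_{k-1}$ as one approaches the $k$-stratum from inside a higher-dimensional open simplex. Whitney's condition~$(b)$, together with its semialgebraic reinforcement by Verdier's $(w)$-regularity, is what makes this feasible: it yields uniform estimates on the angle between the tangent spaces of higher strata and the lower one, which translate, once composed with the tubular projection and the bump function, into the $\mathcal{S}^1$ bounds required for the glued map $\Phi_k$ to remain continuously differentiable across the interface.
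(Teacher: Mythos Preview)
The paper does not prove Theorem~\ref{C1}; it is quoted verbatim from Ohmoto--Shiota \cite[Thm.~1.1]{os} and used as a black box in the proof of Theorem~\ref{thm:C1}. So there is no ``paper's own proof'' for me to compare your proposal against.

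That said, your outline is in the right spirit and, at the level of a roadmap, resembles the strategy of \cite{os}: start from a semialgebraic triangulation whose open simplices map to Nash strata, organize these strata into a Whitney-regular stratification, and then repair the differentials across faces by an induction on strata dimension using tubular/control data. Where your sketch is thin is exactly where the real work lies. First, the ``small perturbation of the vertices'' you invoke to obtain Whitney $(b)$-regularity of the images is not available in general: the strata are determined by $T$, not by the simplicial side, and one must instead refine/adapt the triangulation to a pre-existing Whitney stratification of $T$, not the other way around. Second, the interpolation you describe---bump in $\rho_\sigma$ between $\Phi_{k-1}$ and a ``Nash straightening''---does not by itself prevent the derivative from blowing up as you approach a face from a higher-dimensional open simplex; Whitney/Verdier regularity controls angles between tangent spaces, but the norm of $d\Phi_{k-1}$ can still diverge, and you need an explicit reparametrization on the simplicial side (a carefully chosen semialgebraic self-homeomorphism of $|K|$ that flattens the map near faces) to kill that divergence. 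Third, ensuring that the glued map remains a \emph{homeomorphism} after each modification, and that the final composite is globally $\mathcal{S}^1$ (i.e.\ extends to a $\mathcal{C}^1$ semialgebraic map on an open neighborhood of $|L|$), requires more than local $\mathcal{C}^1$ bounds; one needs coherent choices of the control data and a careful bookkeeping of supports. Ohmoto--Shiota handle all of this, and their argument is substantially more delicate than your sketch suggests; I would treat your proposal as a plausible plan rather than a proof.
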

It is not known if the semialgebraic homeomorphism $\Psi$ can be chosen of class ${\mathcal C}^2$ (see \cite[Sect.1]{os}). However, for a locally ${\mathcal S}^\nu$ polyhedral semialgebraic set we have in addition the following result, see \cite[Prop.I.3.13 \& Rmk.I.3.22]{sh2}.

\begin{thm}\label{Sr-polyhedral}
Let $T\subset\R^n$ be a compact semialgebraic set. If $T$ is locally ${\mathcal S}^\nu$ polyhedral for some integer $\nu\geq1$, then there exist a finite simplicial complex $L$ and a semialgebraic homeomorphism $\Psi:|L|\to T$ such that the restriction of $\Psi$ to $\xi$ belongs to ${\mathcal S}^\nu(\xi,T)$ for each $\xi\in L$.
\end{thm}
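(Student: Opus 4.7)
The strategy is to reduce, via a finite $\mathcal{S}^\nu$-polyhedral atlas and compactness, to building a \emph{semialgebraic} triangulation of $T$ compatible with all chart polyhedra, and then to upgrade the simplex-wise regularity from $\mathcal{S}^0$ to $\mathcal{S}^\nu$ by pulling back piecewise linear maps through the $\mathcal{S}^\nu$ charts.

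First I would, by compactness of $T$ and the locally $\mathcal{S}^\nu$-polyhedral hypothesis, extract a finite open semialgebraic cover $\{U_i\}_{i=1}^r$ of $T$, together with $\mathcal{S}^\nu$ diffeomorphisms $\phi_i\colon U_i\to V_i\subset\R^n$ and compact polyhedra $Q_i$ with $\phi_i(U_i\cap T)=V_i\cap Q_i$. After shrinking, I may assume there is an open semialgebraic refinement $\{W_i\}$ with $\cl(W_i)\subset U_i$ which still covers $T$, and I may fix a finite simplicial structure $K_i$ on a neighbourhood of $\phi_i(\cl(W_i)\cap T)$ inside $Q_i$.

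Next I would apply the compatibility version of the semialgebraic triangulation theorem (Theorem \ref{semialg-triangulation} together with \cite[Thm.9.2.1 \& Rmk.9.2.3]{bcr}) to produce a semialgebraic homeomorphism $\Phi_0\colon |K|\to T$ adapted to the finite semialgebraic family
\[
\mathcal{F}:=\big\{\phi_i^{-1}(V_i\cap\sigma^0):\sigma\in K_i,\ \sigma^0\subset V_i,\ i=1,\ldots,r\big\},
\]
meaning that every open simplex $\Phi_0(\tau^0)$ is contained in a single member of $\mathcal{F}$. After sufficiently many barycentric subdivisions I can additionally arrange that each closed simplex $\Phi_0(\tau)$ lies inside some $U_{i(\tau)}$. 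The core step is then to replace $\Phi_0$, simplex by simplex, by a map $\Psi$ whose restriction to each closed simplex is $\mathcal{S}^\nu$: for a fixed simplex $\tau\in K$ with chart $i=i(\tau)$, the composition $\phi_i\circ\Phi_0|_\tau\colon\tau\to\phi_i(\Phi_0(\tau))$ is a semialgebraic homeomorphism onto a union of closed simplices of $K_i$, which by a further refinement of $K$ can be replaced by an affine simplexwise homeomorphism $\lambda_\tau\colon\tau\to\phi_i(\Phi_0(\tau))$; I then set $\Psi|_\tau:=\phi_i^{-1}\circ\lambda_\tau$, which is $\mathcal{S}^\nu$ on $\tau$ by composition.

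The main obstacle is guaranteeing that the individual choices $\lambda_\tau$ match on every common face to yield a single homeomorphism $\Psi\colon|K|\to T$. I would handle this by an induction on the skeleta of $K$: at each stage a PL choice already fixed on $\partial\tau$ extends to a PL choice on $\tau$ by the affine cone construction (after one more barycentric subdivision if necessary), and compatibility between neighbouring simplices that happen to have been treated in different charts is preserved because the transition maps $\phi_j\circ\phi_i^{-1}$ are $\mathcal{S}^\nu$ on $\phi_i(U_i\cap U_j)$, so being $\mathcal{S}^\nu$ in one chart is the same as being $\mathcal{S}^\nu$ in the other. Once the skeletal induction closes, the resulting map $\Psi\colon|K|\to T$ is a semialgebraic homeomorphism with $\Psi|_\xi\in\mathcal{S}^\nu(\xi,T)$ for every $\xi\in K$, which is exactly the statement sought.
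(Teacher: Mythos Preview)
First, note that the paper does not actually prove Theorem~\ref{Sr-polyhedral}; it is quoted from \cite[Prop.~I.3.13 \& Rmk.~I.3.22]{sh2}, so there is no in-paper argument to compare your proposal against.

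Your proposal has a genuine gap at precisely the step you flag as the ``main obstacle.'' The affine cone extension from $\partial\tau$ to $\tau$ requires that $\phi_{i(\tau)}\circ\Psi|_{\partial\tau}$ be piecewise linear in the chart $\phi_{i(\tau)}$. But a face $\tau'\subset\partial\tau$ may have been handled in a different chart $j=i(\tau')$, where you set $\Psi|_{\tau'}=\phi_j^{-1}\circ\lambda_{\tau'}$; in the chart of $\tau$ this reads $\phi_{i(\tau)}\circ\Psi|_{\tau'}=(\phi_{i(\tau)}\circ\phi_j^{-1})\circ\lambda_{\tau'}$. Since the transition $\phi_{i(\tau)}\circ\phi_j^{-1}$ is only $\mathcal{S}^\nu$ and not PL, the boundary data are merely $\mathcal{S}^\nu$ in the chart of $\tau$, and the PL cone construction is unavailable. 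Your remark that $\mathcal{S}^\nu$-regularity transfers between charts is correct but does not help: it is the \emph{PL structure}, not the regularity, that your extension mechanism consumes. Forcing $i(\tau')=i(\tau)$ whenever $\tau'\subset\tau$ is not an option either, as it would propagate a single chart across each connected component of $|K|$.

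There is also an earlier slip: compatibility of $\Phi_0$ with $\mathcal{F}$ only gives $\phi_i(\Phi_0(\tau^0))\subset\sigma^0$ for a single open simplex $\sigma^0$ of $K_i$; the image $\phi_i(\Phi_0(\tau))$ is then an arbitrary compact semialgebraic subset of $\sigma$, not ``a union of closed simplices of $K_i$,'' and subdividing $K$ alone does not change that. Finally, even if the local pieces $\Psi|_\tau$ could be made compatible, you still owe an argument that the resulting glued map is a bijection onto $T$.
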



\subsection{Spaces of differentiable semialgebraic maps.}\label{smapp}

We will make use in the proof of Theorem \ref{main1} of the following result \cite[Prop.2.D.1]{bfr}. 

\begin{prop}\label{comr}
Let $S\subset\R^m$ and $T\subset\R^n$ be locally compact semialgebraic sets, let $\nu\in\N$ and let $f:T\to T'\subset\R^p$ be an ${\mathcal S}^\nu$ map between semialgebraic sets. Then the map
\[
f_*:{\mathcal S}^\nu(S,T)\to{\mathcal S}^\nu(S,T'),\ g\mapsto f\circ g
\]
is continuous for the ${\mathcal S}^\nu$ topologies.
\end{prop}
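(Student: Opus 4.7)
The plan is to reduce to a scalar target, lift composition to open semialgebraic neighborhoods of $S$ and $T$ via extensions of $f$ and of the input map $g$, apply the Fa\`a di Bruno chain rule there, and finally descend via the quotient/subspace topologies involved. First, I reduce to the case $T' = \R$. Since $\mathcal{S}^\nu(S, T')$ carries the subspace topology from $\mathcal{S}^\nu(S, \R^p) = \mathcal{S}^\nu(S, \R)^p$ (with product topology), and $f = (f_1, \ldots, f_p)$ with each $f_i \in \mathcal{S}^\nu(T)$, continuity of $f_*$ reduces to that of each scalar map $g \mapsto f_i \circ g$. So I assume $p = 1$ and $T' = \R$. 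Since $T$ is locally compact, I fix an open semialgebraic $U \subset \R^n$ in which $T$ is closed, and extend $f$ to some $F \in \mathcal{S}^\nu(U)$. Fix also an open semialgebraic $\Omega \subset \R^m$ with $S$ closed in $\Omega$; the $\mathcal{S}^\nu$ topology on $\mathcal{S}^\nu(S, \R^k)$ is the quotient induced by the restriction $\rho_\Omega$, and is independent of $\Omega$. To check continuity of $f_*$ at a given $g_0 \in \mathcal{S}^\nu(S, T)$, I extend $g_0$ to $G_0 \in \mathcal{S}^\nu(\Omega, \R^n)$ and replace $\Omega$ by $\Omega_0 := G_0^{-1}(U)$, still an open semialgebraic neighborhood of $S$ in which $S$ is closed.

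Next I consider the ambient composition map $\Phi \colon \mathcal{W} \to \mathcal{S}^\nu(\Omega_0)$, $G \mapsto F \circ G$, where $\mathcal{W} := \{ G \in \mathcal{S}^\nu(\Omega_0, \R^n) : G(\Omega_0) \subset U \}$ carries the subspace topology. The key is to prove that $\Phi$ is continuous at $G_0$ in the Whitney $\mathcal{S}^\nu$ topology. By Fa\`a di Bruno, each derivative $D_\alpha(F \circ G)$ with $|\alpha| \le \nu$ is a universal polynomial in the compositions $(D_\beta F) \circ G$ and the derivatives $D_\gamma G$ with $|\beta|, |\gamma| \le \nu$. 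Given a positive continuous semialgebraic $\delta \colon \Omega_0 \to \R$, I produce a positive continuous semialgebraic $\varepsilon \colon \Omega_0 \to \R$ by first shrinking so that $|G - G_0| < \varepsilon$ forces $G(\Omega_0) \subset U$ (using that $U$ is open and $G_0(\Omega_0) \subset U$), and then using continuity and semialgebraicity of the functions $D_\beta F$, controlled on a tube around $G_0(\Omega_0)$, together with the triangle inequality applied to the polynomial chain-rule expansion, to arrange that $|D_\gamma(G - G_0)| < \varepsilon$ for $|\gamma| \le \nu$ entails $|D_\alpha(F \circ G - F \circ G_0)| < \delta$ for $|\alpha| \le \nu$.

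Finally, I descend to $S$. Because $\rho_{\Omega_0}$ is an open continuous surjection and the identity $\rho_{\Omega_0} \circ \Phi = f_* \circ \rho_{\Omega_0}$ holds on $\rho_{\Omega_0}^{-1}(\mathcal{S}^\nu(S, T)) \cap \mathcal{W}$, the continuity of $\Phi$ at $G_0$ transfers to continuity of $f_*$ at $g_0$: an $\mathcal{S}^\nu$ neighborhood of $f \circ g_0$ pulls back via the quotient to an $\mathcal{S}^\nu$ neighborhood of $F \circ G_0$ in $\mathcal{S}^\nu(\Omega_0)$; via $\Phi$ to an $\mathcal{S}^\nu$ neighborhood of $G_0$ in $\mathcal{W}$; and via openness of $\rho_{\Omega_0}$ to an $\mathcal{S}^\nu$ neighborhood of $g_0$ in $\mathcal{S}^\nu(S, T)$. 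The main obstacle is the chain-rule estimate: the delicate point is assembling a single continuous semialgebraic $\varepsilon$ defined on all of $\Omega_0$ from the pointwise polynomial chain-rule bounds, while simultaneously forcing $G(\Omega_0) \subset U$; the rest is a formal consequence of the subspace and quotient topology definitions, together with the product structure used in the initial reduction.
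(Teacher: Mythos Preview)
Your approach is correct and follows the same overall strategy as the paper: extend $f$ to an $\mathcal{S}^\nu$ map $F$ on an open semialgebraic neighborhood $V$ of $T$, establish continuity of the ambient composition map on open semialgebraic neighborhoods of $S$, and descend via the quotient topology. The organizational difference is that the paper works globally, writing $\mathcal{S}^\nu(S,V)$ as the union $\bigcup_{U'}\theta_{U'}(\mathcal{S}^\nu(U',V))$ over all open semialgebraic neighborhoods $U'\subset U$ of $S$ and citing Shiota \cite[II.1.5]{sh} for the continuity of $F_*$ between spaces of $\mathcal{S}^\nu$ maps on Nash manifolds, whereas you fix $g_0$, choose a single $\Omega_0=G_0^{-1}(U)$ adapted to it, and sketch the Fa\`a di Bruno estimate directly. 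Your pointwise route is a legitimate alternative and is arguably more self-contained; the paper's formulation avoids re-deriving the ambient continuity and turns the descent into a pure diagram chase (using that $\mathcal{S}^\nu(U',V)$ is open in $\mathcal{S}^\nu(U',\R^n)$), at the cost of invoking an external reference for the analytic core you outline.
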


The proof of the previous result proposed in \cite{bfr} presents some difficulties concerning the extension of ${\mathcal S}^\nu$ maps whose target is an open subset of an affine space. The proof for the case $\nu=0$, which is the one we need in this paper, is easily arranged and presented next. The proof for the general case is more involved and we include it in Appendix \ref{A}.

\begin{proof}[Proof of Proposition \em \ref{comr} \em for $\nu=0$]
As $S\subset\R^m$ is locally compact it is closed in some open semialgebraic set $U\subset\R^m$. By \cite{dk} we may assume after shrinking $U$ that there exists a ${\mathcal S}^0$ retraction $\tau:U\to S$. Since $T$ is locally compact, it is closed in some open semialgebraic set $V\subset\R^n$ and there exists a semialgebraic ${\mathcal S}^0$ map $F:V\to\R^p$ that extends $f$. We have the following commutative diagram:
\begin{equation*}
\xymatrix{
{\mathcal S}^0(S,T)\ar[r]^{f_*}\ar@{^{(}->}[d]_{\tt j}&{\mathcal S}^0(S,T')\ar@{^{(}->}[d]^{{\tt j}'}\\
{\mathcal S}^0(S,V)\ar[r]^{F_*}&{\mathcal S}^0(S,\R^p)\\
{\mathcal S}^0(U,V)\ar[u]^{\rho_{U}'}\ar[r]^{F_*}&{\mathcal S}^0(U,\R^p)\ar[u]_{\rho_{U}}
}
\end{equation*}
where ${\tt j}$ and ${\tt j}'$ stand for the canonical embeddings and $\rho_U$ and $\rho_U'$ for the restriction homomorphisms. First we explore the lower square. We know that the lower $F_*$ is continuous (it is the Nash manifolds case \cite[II.1.5, p.\hspace{1.5pt}83]{sh}), hence the composition $\rho_U\circ F_*$ is continuous too. The latter map coincides with $F_*\circ\rho_U'$, which is thus continuous. The retraction $\tau:U\to S$ guarantees that $\rho_U'$ is a surjective map. As the target space $V$ is an open semialgebraic subset of an affine space, $\rho_U'$ is an open quotient map by \cite[\S2.C-D]{bfr}, hence the middle $F_*$ is continuous. Now we turn to the upper square. As we have seen that $F_*$ is continuous, the composition $F_*\circ{\tt j}$ is continuous. But this map coincides with ${\tt j}'\circ f_*$, which is consequently continuous. As ${\tt j}'$ is a homeomorphism onto its image, $f_*$ is continuous.
\end{proof}


\subsection{Sets of regular and singular points of a semialgebraic set}\label{s2a}

Let $Z\subset\C^n$ be a complex algebraic set and let $I_\C(Z)$ be the ideal of all polynomials $F\in\C[\x]$ such that $F(z)=0$ for each $z\in Z$. A point $z\in Z$ is \em regular \em if the localization of the polynomial ring $\C[\x]/I_\C(Z)$ at the maximal ideal $\gtM_z$ associated to $z$ is a regular local ring. In this complex setting the Jacobian criterion and Hilbert's Nullstellensatz imply that $z\in Z$ is regular if and only if there exists an open neighborhood $U\subset\C^n$ of $z$ such that $U\cap Z$ is an analytic manifold. We denote $\Reg(Z)$ the set of regular points of $Z$ and it is an open dense subset of $Z$. If $Z$ is irreducible, it is pure dimensional and $\Reg(Z)$ is a connected analytic manifold. In case $Z$ is not irreducible, then the connected components of $\Reg(Z)$ are finitely many analytic manifolds (possibly of different dimensions).

Let $X\subset\R^n$ be a (real) algebraic set and let $I_\R(X)$ be the ideal of all polynomials $f\in\R[\x]$ such that $f(x)=0$ for each $x\in X$. A point $x\in X$ is \em regular \em if the localization of $\R[\x]/I_\R(X)$ at the maximal ideal $\gtm_x$ associated to $x$ is a regular local ring \cite[\S3.3]{bcr}. In addition, $x\in X$ is \em smooth \em if there exists an open neighborhood $U\subset\R^n$ such that $U\cap X$ is a Nash manifold. It holds that each regular point is a smooth point, but in the real case the converse is not always true as it shows the following example.

\begin{exa}\label{snr}
Let $X:=\{(x^2+y^2)xz-y^4=0\} \subset \R^3$. The set of regular points of $X$ is $\Reg(X)=X\setminus\{x=0,y=0\}$. However, the set of smooth points of $X$ is $X\setminus\{0\}$. To prove this fact it suffices to observe that the maps $\varphi_\veps:\{(t,s) \in \R^2: \ t>0\} \to \R^3$ for $\veps=\pm 1$ defined by
\[
\varphi_\veps(s,t):=\veps((s^2+t^2)s^2,(s^2+t^2)st,t^4)
\]
are Nash embeddings, whose images cover the difference $X\setminus\{z=0\}$. It follows that each point $(0,0,a) \in X$ with $a \neq 0$ is smooth. $\sqbullet$
\end{exa}

Let $\widetilde{X}\subset\C^n$ be the complex algebraic set that is the zero set of the extended ideal $I_\R(X)\C[\x]$. We call $\widetilde{X}$ the \em complexification of $X$\em. The ideal $I_\C(\widetilde{X})$ coincides with the ideal $I_\R(X)\otimes_\R\C$, so $\widetilde{X}$ is the smallest complex algebraic subset of $\C^n$ that contains $X$ and
\[
\C[\x]/I_\C(\widetilde{X})\cong(\R[\x]/I_\R(X))\otimes_\R\C.
\]
It holds that the localization $(\R[\x]/I_\R(X))_{\gtm_x}$ is a regular local ring if and only so is its complexification 
\[
(\R[\x]/I_\R(X))_{\gtm_x}\otimes_\R\C\cong(\C[\x]/I_\C(\widetilde{X}))_{\gtM_x}.
\]
Consequently, the \em set of regular points \em of $X$ is $\Reg(X)=\Reg(\widetilde{X})\cap X$ and its \em set of singular points \em is $\Sing(X):=X\setminus\Reg(X)$. The connected components of the open subset $\Reg(X)$ of $X$ is a finite union of Nash manifolds (possibly of different dimensions).

We turn out next to Nash sets. Let $X\subset\R^n$ be a Nash set. A point $x\in X$ is \em regular \em if the localization ${\mathcal N}(X)_{\gtn_x}$ at the maximal ideal $\gtn_x$ of ${\mathcal N}(X)$ associated to $x$ is a regular local ring. Denote $\Reg(X)$ the set of regular points of $X$. Again a point $x\in X$ is \em smooth \em if there exists an open neighborhood $U\subset\R^n$ of $x$ such that $U\cap X$ is a Nash manifold. As before each regular point is a smooth point but Example \ref{snr} shows that the converse is not true in general. The Nash set $X\subset\R^n$ is said to be \em non-singular \em if $X=\Reg(X)$. Assume that $X$ is irreducible. It holds that $X$ is a non-singular Nash set if and only if it is a connected Nash manifold \cite[Def.II.1.12 and Prop.II.5.6]{sh}. Alternatively, this can be shown as an application of Artin-Mazur's Theorem \cite[Thm.8.4.4]{bcr}.

A natural question arises when confronting the definitions of regular point of a real algebraic set $X\subset\R^n$ from the algebraic and Nash viewpoints. Using the properties of completions and henselization \cite[Prop. VII.2.2 and Prop. VII.3.1]{abr} one shows that a point $x\in X$ is regular from the algebraic point of view if and only if it is regular from the Nash point of view.

Note in addition that if the irreducible components of a Nash set $X$ are non-singular, then a point $x\in X$ is regular if and only if it is smooth.


\subsection{Desingularization of algebraic sets}
Let $X\subset Y\subset\R^n$ be algebraic sets such that $Y$ is non-singular. Recall that $X$ is a \em normal-crossings divisor of $Y$ \em if for each point $x\in X$ there exists a regular system of parameters $x_1,\ldots,x_d$ such that $X$ is given in a Zariski neighborhood of $x$ in $Y$ by the equation $x_1\cdots x_k=0$ for some $k=1,\ldots,d$. In particular, the irreducible components of $X$ are non-singular and have codimension $1$ in $Y$. A map $f:=(f_1,\ldots,f_n):Z\to\R^n$ on a (non-empty) subset $Z$ of $\R^m$ is said to be \em regular \em if its components are quotients of polynomials $f_k:=\frac{g_k}{h_k}$ such that $Z\cap\{h_k=0\}=\varnothing$.

The following is a version of Hironaka's desingularization theorems \cite{hi} we will use fruitfully in the sequel.

\begin{thm}[Desingularization]\label{hi1}
Let $X\subset\R^n$ be an algebraic set. Then there exist a non-singular algebraic set $X'\subset\R^m$ and a proper regular map $\phi:X'\to X$ such that $\phi^{-1}(\Sing(X))$ is a normal-crossings divisor of $X'$ and
\[
\phi|_{X'\setminus\phi^{-1}(\Sing(X))}:X'\setminus\phi^{-1}(\Sing(X))\rightarrow X\setminus\Sing X
\]
is a Nash diffeomorphism whose inverse map is regular.
\end{thm}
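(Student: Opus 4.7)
The plan is to reduce the statement to a classical resolution of singularities in characteristic zero, applied in an $\R$-equivariant form so that the real structure is preserved throughout. Concretely, I would invoke a constructive (algorithmic) version of Hironaka's theorem in the style of Bierstone--Milman or Villamayor, applied to the ideal $I_\R(X)\subset\R[\x]$. Such a version produces a finite sequence of blow-ups of $\R^n$ along non-singular $\R$-defined centers, each contained in the successive strict transforms of $\Sing(X)$, and ending at a non-singular real algebraic set $X'\subset\R^m$ together with a proper regular map $\phi:X'\to X$ obtained by composing the blow-up projections. Built into the algorithm are the facts that $\phi^{-1}(\Sing(X))$ is a simple normal-crossings divisor of $X'$ and that $\phi$ restricts to a biregular isomorphism from $X'\setminus\phi^{-1}(\Sing(X))$ onto $X\setminus\Sing(X)$.

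Next I would check that the conclusions translate to the setting of the excerpt. That $X'$ is non-singular as a real algebraic set is immediate; combined with the identification of real-algebraic regular points with Nash smooth points recalled in \S\ref{s2a}, this gives that $X'$ is a Nash manifold. Properness of $\phi$ in the Euclidean topology follows from the fact that each blow-up along a non-singular centre in a non-singular ambient real variety is proper in the Euclidean topology (the exceptional fibres are real projective spaces and hence compact), and compositions of proper maps are proper. The biregular isomorphism over $X\setminus\Sing(X)$, being a regular bijection with regular inverse between non-singular real algebraic sets, is automatically a Nash diffeomorphism (again by \S\ref{s2a}).

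It remains to verify the normal-crossings condition in the precise form required in the excerpt, namely that at each $x\in\phi^{-1}(\Sing(X))$ one can choose a regular system of parameters $x_1,\ldots,x_d$ of $X'$ at $x$ such that $\phi^{-1}(\Sing(X))$ is locally cut out by $x_1\cdots x_k=0$. This is the standard output of the resolution algorithm provided one takes care that all blow-up centres are chosen to be $\R$-defined, non-singular, and transverse to the exceptional divisor accumulated at each step; that this can always be arranged is part of the algorithmic statement.

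The main obstacle is simply \emph{finding} the right formulation of Hironaka's theorem: one needs a version that (i) works over $\R$, (ii) is realised by an actual sequence of blow-ups with $\R$-defined centres rather than an abstract birational modification, and (iii) produces an honest proper regular morphism $\phi:X'\to X$ between affine real algebraic sets, not merely a proper map in the schematic sense. Modern constructive proofs of resolution are equivariant under automorphisms of the base field and therefore yield (i)--(iii) directly; once one of these is cited, the verification above is essentially bookkeeping.
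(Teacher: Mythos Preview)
The paper does not prove this theorem: it is stated in the preliminaries as ``a version of Hironaka's desingularization theorems \cite{hi}'' and used as a black box, with no argument supplied beyond the citation. Your proposal is therefore not to be compared against a proof in the paper but against the implicit claim that the statement is a direct consequence of \cite{hi}; on that score your sketch is correct and is exactly the kind of unpacking one would do to justify the citation---invoke a constructive, field-equivariant resolution (Bierstone--Milman or Villamayor) so that centres are $\R$-defined and non-singular, compose the blow-ups to get a proper regular $\phi$, and read off the normal-crossings and biregular-over-the-regular-locus conclusions from the algorithm's output.
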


\begin{rem}
If $X$ is pure dimensional, $X\setminus\Sing X=\Reg(X)$ is dense in $X$. Consequently, as $\phi$ is proper, $\phi$ is also surjective. Furthermore $X'$ is a real algebraic manifold, that is, it is non-singular and pure dimensional. $\sqbullet$
\end{rem}

\section{Proof of Theorems \ref{thm:C1} and \ref{main2}}\label{s2'}

The purpose of this section is to prove Theorems \ref{thm:C1} and \ref{main2}, and Corollary \ref{cor:KP}. We begin developing our tools to approach those proofs.

\subsection{The `shrink-widen' covering and approximation lemmas}

Let $\sigma$ be a simplex of $\R^p$, let $\sigma^0$ be the simplicial interior of $\sigma$ and let $b_\sigma$ be the barycenter of $\sigma$. Given $\veps\in(0,1)$ denote $h_\veps:\R^p\to\R^p,\ x\mapsto b_\sigma+(1-\veps)(x-b_\sigma)$ the homothety of $\R^p$ of center $b_\sigma$ and ratio $1-\veps$. We define the \em $(1-\veps)$-shrinking $\sigma^0_\veps$ of $\sigma^0$ \em by $\sigma^0_\veps:=h_\veps(\sigma^0)$. Observe that $\cl(\sigma^0_\veps)=h_\veps(\sigma)\subset\sigma^0$ for every $\veps\in(0,1)$ and $\sigma^0_\veps$ tends to $\sigma_0$ when $\veps\to 0$. In addition, $\sigma^0=\bigcup_{\veps\in (0,1)}\sigma^0_\veps$ and $\sigma^0_{\veps_2}\subset\sigma^0_{\veps_1}$ if $0<\veps_1\leq\veps_2<1$.

We fix the following notations for the rest of the subsection. Let $S\subset\R^m$ be a compact semialgebraic set, let $K$ be a finite simplicial complex of $\R^p$ and let $\Phi:|K|\to S$ be a semialgebraic homeomorphism such that for each open simplex $\sigma^0$ of $K$ the set $\Phi(\sigma^0)\subset\R^m$ is a Nash manifold and the restriction $\Phi|_{\sigma^0}:\sigma^0\to\Phi(\sigma^0)$ is a Nash diffeomorphism. Define $\pol^0:=\{\Phi(\sigma^0)\}_{\sigma\in K}$ and $\pol:=\{\Phi(\sigma)\}_{\sigma\in K}$. To lighten the notation the elements of $\pol$ will be denoted with the letters $\sfs,\sft,\ldots$ while those of $\pol^0$ with the letters $\sfs^0,\sft^0,\ldots$ in such a way that $\cl(\sfs^0)=\sfs$ and $\sfs^0$ is the interior of $\sfs$ as a semialgebraic manifold-with-boundary. In other words, if $\sfs=\Phi(\sigma)$, then $\sfs^0=\Phi(\sigma^0)$. Moreover, we indicate $\sfs^0_\veps$ the shrinking of $\sfs^0=\Phi(\sigma^0)$ corresponding to $\sigma^0_\veps$ via $\Phi$, that is, $\sfs^0_\veps:=\Phi(\sigma^0_\veps)$.

Consider a Nash tubular neighborhood $\rho_{\sfs^0}:T_{\sfs^0}\to\sfs^0$ of $\sfs^0$ in $\R^m$ and the family of open semialgebraic sets $T_{\sfs^0,\delta}:=\{x\in T_{\sfs^0}:\ \|x-\rho_{\sfs^0}(x)\|_m<\delta\}$ where $\delta>0$. We write $\sfs^0_{\veps,\delta}$ to denote the \em $\delta$-widening of $\sfs^0_\veps$ with respect to $\rho_{\sfs^0}$\em, which is the open neighborhood $\sfs^0_{\veps,\delta}:=(\rho_{\sfs^0})^{-1}(\sfs^0_\veps)\cap T_{\sfs^0,\delta}$ of $\sfs^0_\veps$ in $\R^m$. If $C$ is a closed subset of $\R^m$ such that $C\cap\cl(\sfs^0_\veps)=\varnothing$, there exists $\delta>0$ such that $C\cap\cl(\sfs^0_{\veps,\delta})=\varnothing$. Denote $\rho_{\sfs^0,\veps,\delta}:=\rho_{\sfs^0}|_{\sfs^0_{\veps,\delta}}:\sfs^0_{\veps,\delta}\cap S\to\sfs^0_\veps$ the Nash retraction obtained restricting $\rho_{\sfs^0}$ from $\sfs^0_{\veps,\delta}\cap S$ to $\sfs^0_\veps$.

\begin{lem}\label{lem:covering}
Fix~$\delta>0$. Then, for each $\sfs^0\in\pol^0$ there exist a non-empty open semialgebraic subset $V_{\sfs^0}$ of~$\sfs^0$ (a `shrinking' of $\sfs^0$), an open semialgebraic neighborhood $U_{\sfs^0}$ of $V_{\sfs^0}$ in $S$ (a `widening' of~$V_{\sfs^0}$) satisfying $V_{\sfs^0}=U_{\sfs^0}\cap\sfs^0$ and a Nash retraction $r_{\sfs^0}:U_{\sfs^0}\to V_{\sfs^0}$ such that:
\begin{itemize}
\item[(i)] $\{U_{\sfs^0}\}_{\sfs^0\in\pol^0}$ is an open covering of $S$.
\item[(ii)] $\cl(U_{\sfs^0})\cap\sft=\emptyset$ for each pair $(\sfs^0,\sft)\in\pol^0\times\pol$ satisfying ${\sfs^0}\cap\sft=\emptyset$.
\item[(iii)] $\sup_{x\in U_{\sfs^0}}\{\|x-r_{\sfs^0}(x)\|_m\}<\delta$ for each $\sfs^0\in\pol^0$.
\end{itemize}
\end{lem}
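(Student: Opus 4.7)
The plan is to set, for each $\sfs^0 \in \pol^0$,
\[
V_{\sfs^0} := \sfs^0_{\veps_\sfs}, \qquad U_{\sfs^0} := \sfs^0_{\veps_\sfs,\delta'_\sfs} \cap S, \qquad r_{\sfs^0} := \rho_{\sfs^0}|_{U_{\sfs^0}},
\]
for parameters $\veps_\sfs \in (0,1)$ and $\delta'_\sfs \in (0,\delta)$ to be chosen in a prescribed order. The equality $V_{\sfs^0} = U_{\sfs^0} \cap \sfs^0$ and the fact that $r_{\sfs^0}$ is a (Nash) retraction onto $V_{\sfs^0}$ are immediate from the inclusion $\rho_{\sfs^0}(\sfs^0_{\veps,\delta'} \cap S) \subset \sfs^0_\veps$ and the identity $\rho_{\sfs^0}|_{\sfs^0} = \id_{\sfs^0}$.

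Properties (ii) and (iii) will hold as soon as $\delta'_\sfs$ is small enough: (iii) is automatic since $\|x - \rho_{\sfs^0}(x)\|_m < \delta'_\sfs < \delta$ by the very definition of $T_{\sfs^0,\delta'_\sfs}$; for (ii), note that $\cl(\sfs^0_{\veps_\sfs}) = h_{\veps_\sfs}(\sfs)$ is a compact subset of $\sfs^0$, hence disjoint from the compact set $\bigcup\{\sft\in\pol:\sfs^0 \cap \sft=\emptyset\}$, and this disjointness persists under any sufficiently small widening in the normal direction of $\rho_{\sfs^0}$.

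The main work concerns property (i), the covering condition, which couples the shrinking parameters $\veps_\sfs$ across all simplices. I would argue by induction on $d = 0, 1, \ldots, \dim S$, carrying the hypothesis that after all $\sfs \in \pol$ with $\dim \sfs \leq d$ have been treated, the union $\bigcup_{\dim \sfs \leq d} U_{\sfs^0}$ contains the closed $d$-skeleton $\bigcup_{\dim \sfs \leq d}\sfs$. The base case $d = 0$ is trivial: each $0$-dimensional element of $\pol$ is a singleton $\sfs^0 = \{v\}$ with $V_{\sfs^0} = \{v\}$, and $U_{\sfs^0}$ may be chosen as a sufficiently small open semialgebraic ball of $S$ around $v$. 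For the inductive step, fix $\sft \in \pol$ of dimension $d+1$; its frontier $\Fr(\sft) = \bigcup_{\sfs < \sft}\sfs$ is compact and, by the hypothesis, lies in the open subset $W_\sft := \bigcup_{\sfs < \sft} U_{\sfs^0}$ of $S$. The key observation is that the collar $\sft^0 \setminus \sft^0_{\veps_\sft}$ shrinks into every prescribed $S$-neighborhood of $\Fr(\sft)$ as $\veps_\sft \to 0^+$: indeed, for the simplex $\sigma \in K$ with $\Phi(\sigma) = \sft$, the set $\sigma^0 \setminus \sigma^0_{\veps_\sft}$ is the zone of points of $\sigma$ with some barycentric coordinate below a constant multiple of $\veps_\sft$, which collapses onto $\Fr(\sigma)$, while $\Phi$ is uniformly continuous on the compact simplex $\sigma$. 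Hence $\veps_\sft$ may be chosen small enough that $\sft^0 \setminus \sft^0_{\veps_\sft} \subset W_\sft$, which gives $\sft \subset V_{\sft^0} \cup W_\sft$; then $\delta'_\sft \in (0,\delta)$ is picked small enough to secure (ii) and (iii) for $\sft$, completing the inductive step.

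The principal obstacle is precisely this mutual dependence between the shrinking parameters $\veps_\sfs$: a single uniform $\veps$ fails, since a point of $S$ close to $\Fr(\sfs^0)$ has its $\rho_{\sfs^0}$-image close to $\Fr(\sfs^0)$ and thus generally outside $\sfs^0_\veps$, so the widening of $\sfs^0$ alone cannot absorb nearby points of $S$. The dimensional induction circumvents this by delegating the coverage of such near-frontier points to the widenings of lower-dimensional open simplices, which have been fixed before $\veps_\sft$ is chosen.
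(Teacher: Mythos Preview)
Your proposal is correct and follows essentially the same approach as the paper: induction on the dimension of the open simplices, with the base case handled by small balls around vertices and the inductive step by observing that the compact collar $\sft\setminus\sft^0_{\veps_\sft}$ is absorbed by the already-constructed widenings of the proper faces of $\sft$ once $\veps_\sft$ is small enough. The paper phrases the key step contrapositively (the set $C_\sigma:=\sigma\setminus\Phi^{-1}\big(\bigcup_{\tau\subset\mathrm{Bd}(\sigma)}U_{\Phi(\tau^0)}\big)$ is compact in $\sigma^0$, hence contained in some $\sigma^0_\veps$), and chooses a single $\veps$ per dimension rather than one per simplex, but these are cosmetic differences.
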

\begin{proof}
Write $d:=\dim(S)$ and $\pol^0_e:=\{\sfs^0\in\pol^0:\, \dim(\sfs^0)\leq e\}$ for $e=0,\ldots,d$. Let us prove by induction on $e\in\{0,1,\ldots,d\}$ that for each $\sfs^0\in\pol^0_e$ there exist an open semialgebraic subset $U_{\sfs^0}^e$ of $S$ and a Nash retraction $r_{\sfs^0}^e:U_{\sfs^0}^e\to V_{\sfs^0}^e:=U_{\sfs^0}^e\cap\sfs^0\neq\emptyset$ such that:
\begin{itemize}
\item[(1)] $\bigcup_{\sfs^0\in\pol^0_e}\sfs^0\subset\bigcup_{\sfs^0\in\pol^0_e}U_{\sfs^0}^e$.
\item[(2)] $\cl(U_{\sfs^0}^e)\cap\sft=\emptyset$ for each pair $(\sfs^0,\sft)\in\pol^0_e\times\pol$ satisfying ${\sfs^0}\cap\sft=\emptyset$.
\item[(3)] $\sup_{x\in U_{\sfs^0}^e}\{\|x-r_{\sfs^0}^e(x)\|_m\}<\delta$ for each $\sfs^0\in\pol^0_e$.
\end{itemize}
Obsviously, the sets $U_{\sfs^0}:=U_{\sfs^0}^d$ and the maps $r_{\sfs^0}:=r_{\sfs^0}^d$ with $\sfs^0\in\pol^0_d=\pol^0$ will be the desired open semialgebraic sets and Nash retractions.

Consider first the case $e=0$. Choose $\delta'\in(0,\delta)$ such that the open ball $B(v,2\delta')$ of~$\R^m$ of center $v$ and radius $2\delta'$ does not meet $\bigcup_{\sft\in\pol,v\not\in\sft}\sft$ for each $\{v\}\in\pol^0_0$. Take~$U_{\{v\}}^0:=B(v,\delta')\cap S$, $V_{\{v\}}^0:=\{v\}$ and $r_{\{v\}}^0:U_{\{v\}}^0\to V_{\{v\}}^0,\ x\mapsto v$ the constant map for each $\{v\}\in\pol^0_0$.

Fix $e\in\{0,\ldots,d-1\}$ and suppose that the assertion is true for such an $e$. As
\[
C_\sigma:=\sigma\setminus\Phi^{-1}\Big(\bigcup_{\tau\in K,\tau\subset\mathrm{Bd}(\sigma)}U^e_{\Phi(\tau^0)}\Big)
\]
is a compact subset of $\sigma^0=\bigcup_{\veps\in (0,1)}\sigma^0_\veps$, there exists $\veps\in (0,1)$ such that $C_\sigma\subset\sigma^0_\veps$ for each $\sigma\in K$ of dimension $e+1$. We have
\[
\textstyle
\bigcup_{\sfs^0\in\pol^0_{e+1}}\sfs^0\subset\bigcup_{\sfs^0\in\pol^0_e}U_{\sfs^0}^e\cup\bigcup_{\sfs^0\in\pol^0_{e+1}\setminus\pol^0_e}\sfs^0_\veps.
\]
If $(\sfs^0,\sft)\in(\pol^0_{e+1}\setminus\pol^0_e)\times\pol$ satisfies $\sfs^0\cap\sft=\emptyset$, then $\cl(\sfs^0_\veps)\cap\sft=\varnothing$ because $\cl(\sfs^0_\veps)\subset\sfs^0$. Pick $\delta'\in(0,\delta)$ such that $\cl(\sfs^0_{\veps,\delta'}\cap S)\cap\sft=\emptyset$ for each pair $(\sfs^0,\sft)\in(\pol^0_{e+1}\setminus\pol^0_e)\times\pol$ satisfying $\sfs^0\cap\sft=\emptyset$. For each $\sfs^0\in\pol^0_{e+1}$ define:
\begin{itemize}
\item $V_{\sfs^0}^{e+1}:=V_{\sfs^0}^e$, $U_{\sfs^0}^{e+1}:=U_{\sfs^0}^e$ and $r_{\sfs^0}^{e+1}:=r_{\sfs^0}^e$ if $\sfs^0\in\pol^0_e$, and \vspace{.3em}
\item $V_{\sfs^0}^{e+1}:=\sfs^0_\veps$, $U_{\sfs^0}^{e+1}:=\sfs^0_{\veps,\delta'}\cap S$ and $r_{\sfs^0}^{e+1}:=\rho_{\sfs^0,\veps,\delta'}$ if $\sfs^0\in\pol^0_{e+1}\setminus\pol^0_e$ (recall that the retraction $\rho_{\sfs^0,\veps,\delta'}$ was defined above the statement of this lemma).
\end{itemize}

The open semialgebraic sets $U_{\sfs^0}^{e+1}$, the non-empty semialgebraic sets $V_{\sfs^0}^{e+1}$ and the retractions $r_{\sfs^0}^{e+1}:U_{\sfs^0}^{e+1}\to V_{\sfs^0}^{e+1}$ for $\sfs^0\in\pol^0_{e+1}$ satisfy conditions $(1)$ to $(3)$, as required. 
\end{proof}

As a consequence of the previous result we obtain the following approximation lemma.

\begin{lem}\label{lem:approximation}
Let $L$ be a finite simplicial complex of $\R^q$, let $g\in {\mathcal S}^0(S,|L|)$ and let $\nu\geq1$ be a positive integer. Suppose that for each $\sft\in\pol$ the restriction $g|_{\sft^0}$ belongs to ${\mathcal S}^\nu(\sft^0,|L|)$ and there exists $\xi_\sft\in L$ such that $g(\sft)\subset\xi_\sft$. Fix $\eta>0$. Then there exists $h\in {\mathcal S}^\nu(S,|L|)$ with the following properties:
\begin{itemize}
\item[(i)] For each $\sft\in\pol$, there exists an open semialgebraic neighborhood $W_\sft$ of $\sft$ in $S$ such that $h(W_\sft)\subset\xi_\sft$.
\item[(ii)] $\|h(x)-g(x)\|_q<\eta$ for each $x\in S$.
\end{itemize} 
\end{lem}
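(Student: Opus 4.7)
The strategy is the classical partition-of-unity gluing: smoothen $g$ piece by piece via the Nash retractions produced in Lemma \ref{lem:covering}, and glue the pieces with a ${\mathcal S}^\nu$ partition of unity. The subtlety is that the target is the polyhedron $|L|$, not an ambient affine space, so we have to make sure the gluing lands inside $|L|$.

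First, since $S$ is compact and $g\in{\mathcal S}^0(S,\R^q)$, the map $g$ is uniformly continuous, so we may pick $\delta>0$ such that $\|x-y\|_m<\delta$ implies $\|g(x)-g(y)\|_q<\eta$ whenever $x,y\in S$. Apply Lemma \ref{lem:covering} with this $\delta$ to obtain, for each $\sfs^0\in\pol^0$, an open semialgebraic neighborhood $U_{\sfs^0}$ of $V_{\sfs^0}\subset\sfs^0$ in $S$ together with a Nash retraction $r_{\sfs^0}:U_{\sfs^0}\to V_{\sfs^0}$ satisfying (i)--(iii) of that lemma. Since $\{U_{\sfs^0}\}_{\sfs^0\in\pol^0}$ is a finite open semialgebraic cover of the locally compact semialgebraic set $S$, there exists a ${\mathcal S}^\nu$ partition of unity $\{\theta_{\sfs^0}\}_{\sfs^0\in\pol^0}$ on $S$ subordinate to it (see \cite[II.2]{sh}).

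Next I would define
\[
h(x):=\sum_{\sfs^0\in\pol^0}\theta_{\sfs^0}(x)\,g(r_{\sfs^0}(x)),\qquad x\in S,
\]
with the convention that the $\sfs^0$-summand is $0$ outside $U_{\sfs^0}$. Each summand is ${\mathcal S}^\nu$ on $S$: on $U_{\sfs^0}$, the map $r_{\sfs^0}$ is Nash and $g$ is ${\mathcal S}^\nu$ on the open set $V_{\sfs^0}\subset\sfs^0$ containing $r_{\sfs^0}(U_{\sfs^0})$, so $g\circ r_{\sfs^0}\in{\mathcal S}^\nu(U_{\sfs^0},\R^q)$; multiplying by $\theta_{\sfs^0}$, which has support inside $U_{\sfs^0}$, yields a globally ${\mathcal S}^\nu$ map on $S$. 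Using $\sum\theta_{\sfs^0}=1$ and $\|x-r_{\sfs^0}(x)\|_m<\delta$ on $U_{\sfs^0}$, one estimates
\[
\|h(x)-g(x)\|_q\leq\sum_{\sfs^0}\theta_{\sfs^0}(x)\|g(r_{\sfs^0}(x))-g(x)\|_q<\eta,
\]
giving (ii).

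The main obstacle is property (i), which also forces $h(S)\subset|L|$. The key combinatorial observation is that for two elements $\sfs,\sft$ of the simplicial-like family $\pol$, one has $\sfs^0\cap\sft\neq\emptyset$ if and only if $\sfs$ is a face of $\sft$ (because any two simplices of a simplicial complex meet in a common face, and the open simplex $\sfs^0$ meets that common face exactly when it equals $\sfs$). Now for each $\sft\in\pol$ set
\[
W_\sft:=S\setminus\bigcup\bigl\{\cl(U_{\sfs^0}):\sfs^0\in\pol^0,\ \sfs^0\cap\sft=\emptyset\bigr\},
\]
which by Lemma \ref{lem:covering}(ii) is an open semialgebraic neighborhood of $\sft$ in $S$. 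For $y\in W_\sft$, the summands with $\theta_{\sfs^0}(y)\neq 0$ all satisfy $\sfs^0\cap\sft\neq\emptyset$, hence $\sfs\subset\sft$; thus $r_{\sfs^0}(y)\in V_{\sfs^0}\subset\sfs\subset\sft$, and by the hypothesis $g(\sft)\subset\xi_\sft$ we get $g(r_{\sfs^0}(y))\in\xi_\sft$. Since $\xi_\sft$ is a convex simplex of $L$, the convex combination $h(y)$ lies in $\xi_\sft$, yielding (i). Finally, applied to the unique simplex $\sft$ of $\pol$ whose relative interior contains a given $x\in S$, the same argument (noting $x\in W_\sft$ since $\sft^0\subset W_\sft$) gives $h(x)\in\xi_\sft\subset|L|$, so indeed $h\in{\mathcal S}^\nu(S,|L|)$.
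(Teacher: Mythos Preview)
Your proof is correct and follows essentially the same route as the paper's: apply Lemma~\ref{lem:covering} with a $\delta$ coming from uniform continuity of $g$, take a ${\mathcal S}^\nu$ partition of unity $\{\theta_{\sfs^0}\}$ subordinate to $\{U_{\sfs^0}\}$, define $h=\sum\theta_{\sfs^0}\,(g\circ r_{\sfs^0})$, and set $W_\sft:=S\setminus\bigcup_{\sfs^0\cap\sft=\emptyset}\cl(U_{\sfs^0})$. The combinatorial step you isolate (that $\sfs^0\cap\sft\neq\emptyset$ forces $\sfs\subset\sft$) is exactly what the paper uses implicitly to conclude that all contributing terms $g(r_{\sfs^0}(y))$ lie in the convex simplex $\xi_\sft$, so that $h(y)\in\xi_\sft$; your argument and the paper's are the same in substance and organisation.
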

\begin{proof}
As $g$ is uniformly continuous, there exists $\delta>0$ such that 
\begin{equation}\label{eq:uc-g}
\|g(x)-g(x')\|_q<\eta\;\text{ for each pair $x,x'\in S$ satisfying $\|x-x'\|_m<\delta$}.
\end{equation}
By Lemma \ref{lem:covering} for each $\sfs^0\in\pol^0$ there exist an open semialgebraic subset $U_{\sfs^0}$ of $S$ with $V_{\sfs^0}:=U_{\sfs^0}\cap {\sfs^0}\neq\emptyset$ and a Nash retraction $r_{\sfs^0}:U_{\sfs^0}\to V_{\sfs^0}$ such that $\{U_{\sfs^0}\}_{\sfs^0\in\pol^0}$ is a covering of $S$ satisfying:
\begin{align}\label{eq:cl}
&\text{$\cl(U_{\sfs^0})\cap\sft=\emptyset\,$ for each pair $(\sfs^0,\sft)\in\pol^0\times\pol$ satisfying ${\sfs^0}\cap\sft=\emptyset$ and}\\
\label{eq:max}
&\text{$\textstyle\sup_{x\in U_{\sfs^0}}\{\|x-r_{\sfs^0}(x)\|_m\}<\delta\,$ for each $\sfs^0\in\pol^0$.}
\end{align}

Let $\{\theta_{\sfs^0}:S\to[0,1]\}_{\sfs^0\in\pol^0}$ be a ${\mathcal S}^\nu$ partition of unity subordinated to the finite open semialgebraic covering $\{U_{\sfs^0}\}_{\sfs^0\in\pol^0}$ of $S$. For each $\sfs^0\in\pol^0$, the semialgebraic map
\[
g\circ r_{\sfs^0}:U_{\sfs^0}\to V_{\sfs^0}\subset\sfs^0\subset\sfs\to\xi_\sfs,\ x\mapsto r_{\sfs^0}(x)\mapsto g(r_{\sfs^0}(x))
\]
is ${\mathcal S}^\nu$, so also the semialgebraic map $H_{\sfs^0}:S\to\R^q$ defined by 
\begin{equation*}
H_{\sfs^0}(x):=\begin{cases}
\theta_{\sfs^0}(x)\cdot g(r_{\sfs^0}(x))&\text{if $x\in U_{\sfs^0}$,}\\
0&\text{ if $x\in S\setminus U_{\sfs^0}$,}
\end{cases}
\end{equation*}
belongs to ${\mathcal S}^\nu(S,\R^q)$. Consider the ${\mathcal S}^\nu$ map $H:=\sum_{\sfs^0\in\pol^0}H_{\sfs^0}:S\to\R^q$.

Fix $\sft\in\pol$ and define $W_\sft:=S\setminus\bigcup_{\sfs^0\in\pol^0,\,{\sfs^0}\cap\sft=\emptyset}\cl(U_{\sfs^0})$, which is by \eqref{eq:cl} an open semialgebraic neighborhood of $\sft$ in $S$. We claim: $H(W_\sft)\subset\xi_\sft$. 

Pick $x\in W_\sft$. If $\sfs^0\in\pol^0$ and ${\sfs^0}\cap\sft=\emptyset$, then $\theta_{\sfs^0}(x)=0$ because the support of $\theta_{\sfs^0}$ is contained in $U_{\sfs^0}$ and $x\not\in\cl(U_{\sfs^0})$. If ${\sfs^0}\cap\sft\neq\emptyset$, then ${\sfs^0}\subset\sft$, so we conclude
\begin{align}
&\sum_{\substack{\sfs^0\in\pol^0,\,{\sfs^0}\subset\sft,\\x\in U_{\sfs^0}}}\theta_{\sfs^0}(x)=1\ \text{and}\label{eq:sum}\\
H(x)=&\sum_{\substack{\sfs^0\in\pol^0,\,{\sfs^0}\subset\sft,\\x\in U_{\sfs^0}}}\theta_{\sfs^0}(x)g(r_{\sfs^0}(x)).\label{eq:sum2}
\end{align}

If $\sfs^0\in\pol^0$ satisfies ${\sfs^0}\subset\sft$ and $x\in U_{\sfs^0}$, then $r_{\sfs^0}(x)\in V_{\sfs^0}\subset\sfs^0$, so $g(r_{\sfs^0}(x))\in\xi_\sft$. As $\xi_\sft$ is a convex set and each $g(r_{\sfs^0}(x))\in\xi_\sft$ if ${\sfs^0}\subset\sft$ and $x\in U_{\sfs^0}$, we conclude by means of \eqref{eq:sum} and \eqref{eq:sum2} that $H(x)\in\xi_\sft$. Consequently, $H(W_\sft)\subset\xi_\sft$ as claimed. 

As $S=\bigcup_{\sft\in\pol}\sft=\bigcup_{\sft\in\pol}W_\sft$, we deduce $H(S)$ is contained in $|L|$ and $h:S\to|L|,\,x\mapsto H(x)$ is an ${\mathcal S}^\nu$ map that satisfies property (i).

It remains to show that property (ii) holds for $h$. Pick $x\in S$ and observe that $$
\sum_{\sfs^0\in\pol^0,\,x\in U_{\sfs^0}}\theta_{\sfs^0}(x)=1.
$$
Using inequalities \eqref{eq:uc-g} and \eqref{eq:max} we deduce
\begin{align*}
\|h(x)-g(x)\|_q&=\Big\|\sum_{\sfs^0\in\pol^0,\,x\in U_{\sfs^0}}\theta_{\sfs^0}(x)\big(g(r_{\sfs^0}(x))-g(x)\big)\Big\|_q\leq\\
&\leq\sum_{\sfs^0\in\pol^0,\,x\in U_{\sfs^0}}\theta_{\sfs^0}(x)\|g(r_{\sfs^0}(x))-g(x)\|_q<\eta,
\end{align*}
as required.
\end{proof}


\subsection{Proof of Theorem \ref{main2}}
Let $f:S\to T$ be a ${\mathcal S}^0$ map between compact semialgebraic sets $S\subset\R^m$ and $T\subset\R^n$. Suppose $T$ is locally ${\mathcal S}^\nu$~polyhedral for some integer $\nu\geq1$. By Theorem \ref{semialg-triangulation} there exist a finite simplicial complex $K$ and a semialgebraic homeomorphism $\Phi:|K|\to S$ such that the restriction $\Phi|_{\sigma^0}:\sigma^0\to\Phi(\sigma^0)$ is a Nash diffeomorphism for each open simplex $\sigma^0$ of $K$. Define $\pol:=\{\Phi(\sigma)\}_{\sigma\in K}$ and $\pol^0:=\{\Phi(\sigma^0)\}_{\sigma\in K}$. Similarly, Theorem \ref{Sr-polyhedral} implies the existence of a finite simplicial complex $L$ and a semialgebraic homeomorphism $\Psi:|L|\to T$ such that $\Psi|_\xi\in {\mathcal S}^\nu(\xi,T)$ for each $\xi\in L$. Suppose the realization $|L|$ of $L$ belongs to $\R^q$.

Choose an arbitrary $\veps>0$. We will prove \em the existence of a map $H\in {\mathcal S}^\nu(S,T)$ such that $\|H(x)-f(x)\|_n<\veps$ for each $x\in S$\em. 

By the uniform continuity of $\Psi$, there exists $\delta>0$ such that
\begin{equation}\label{eq:uc-Psi}
\|\Psi(z)-\Psi(z')\|_n<\veps\;\text{ for each pair $z,z'\in |L|$ satisfying $\|z-z'\|_q<\delta$}.
\end{equation}
Consider the ${\mathcal S}^0$ map $F:=\Psi^{-1}\circ f\circ\Phi:|K|\to|L|$. By Theorem \ref{s-a} we know that after replacing $K$ and $L$ by suitable iterated barycentric subdivisions there exists a simplicial map $F^*:|K|\to|L|$ such that
\begin{equation}\label{eq:F*}
\|F^*(y)-F(y)\|_q<\delta/2\;\text{ for each $y\in |K|$}.
\end{equation}
Define the ${\mathcal S}^0$ maps $g:=\Psi^{-1}\circ f=F\circ\Phi^{-1}:S\to|L|$ and $g^*:=F^*\circ\Phi^{-1}:S\to|L|$. For each $\sft\in\pol$ the restriction $\Phi^{-1}|_{\sft^0}:\sft^0\to\Phi^{-1}(\sft^0)$ is a Nash diffeomorphism. Thus, as $F^*|_{\Phi^{-1}(\sft^0)}$ is an affine map, $g^*|_{\sft^0} \in {\mathcal S}^\nu(\sft^0,|L|)$. Moreover, there exists $\xi_\sft\in L$ such that $g^*(\sft)\subset\xi_\sft$. By \eqref{eq:F*} we have:
\begin{equation}\label{eq:F**}
\|g^*(x)-g(x)\|_q<\delta/2\;\text{ for each $x\in S$}.
\end{equation}
The following commutative diagram summarizes the situation we have achieved until the moment.
\[
\xymatrix{
S\ar[rr]^f\ar@<0.5ex>[rrdd]^g\ar@<-0.5ex>[rrdd]_{g^*}&&T\\
\\
|K|\ar[uu]^\Phi_\cong\ar@<0.5ex>[rr]^F\ar@<-0.5ex>[rr]_{F^*}&&|L|\ar[uu]_{\Psi}^{\cong}&\ar@{_{(}->}[l]\ar@{_{(}->}[uul]_{\Psi|_\xi}\xi
}
\]
Now, we approximate $g^*$ by a suitable ${\mathcal S}^\nu$ map $h^*$ between $S$ and $|L|$ that will provide, after composing with $\Psi$, the required approximating ${\mathcal S}^\nu$ map $H:=\Psi\circ h^*:S\to T$.

Indeed, by Lemma \ref{lem:approximation} there exist $h^*\in {\mathcal S}^\nu(S,|L|)$ and for each $\sft\in\pol$ an open semialgebraic neighborhood $W_\sft$ of $\sft$ in $S$ satisfying:
\begin{align} 
&\text{$h^*(W_\sft)\subset\xi_\sft$ for each $\sft\in\pol$ and}\label{eq:g*-h*-2}\\
&\text{$\|h^*(x)-g^*(x)\|_q<\delta/2$ for each $x\in S$.}\label{eq:g*-h*-1}
\end{align}
We define $H:=\Psi\circ h^*:S\to T$ and claim: $H\in{\mathcal S}^\nu(S,T)$. 

Recall that $\{W_\sft\}_{\sft\in\pol}$ is an open semialgebraic covering of $S$. Thanks to \eqref{eq:g*-h*-2} the restriction $h^*|_{W_\sft}:W_\sft\to\xi_\sft$ is a well-defined ${\mathcal S}^\nu$ map for each $\sft\in\pol$. In addition, $H|_{W_\sft}=\Psi|_{\xi_\sft}\circ h^*|_{W_\sft}$. As both $\Psi|_{\xi_\sft}$ and $h^*|_{W_\sft}$ are ${\mathcal S}^\nu$ maps, $H|_{W_\sft}$ is also a ${\mathcal S}^\nu$ map. Consequently, $H\in{\mathcal S}^\nu(S,T)$, as claimed.

Next, by \eqref{eq:F**} and \eqref{eq:g*-h*-1} we have
\[
\text{$\|h^*(x)-g(x)\|_q<\delta$ for each $x\in S$}.
\]
Combining the latter inequality with \eqref{eq:uc-Psi}, we conclude
\[
\text{$\|H(x)-f(x)\|_n=\|\Psi(h^*(x))-\Psi(g(x))\|_n<\veps$ for each $x\in S$},
\]
as required.\qed


\subsection{Proof of Theorem \ref{thm:C1}} Let $S$ and $\Phi:|K| \to S$ be as above. Consider a ${\mathcal S}^0$ map $f:S \to \R^n$ and define $T:=f(S)$. By Theorem \ref{C1} there exist a finite simplicial complex $L$ and a semialgebraic homeomorphism $\Psi:|L|\to T$ such that $\Psi\in {\mathcal S}^1(|L|,T)$. Repeating the preceding argument with $\nu=1$, we obtain that for every $\veps>0$ there exists $H \in {\mathcal S}^1(S,T)$ such that $\|H(x)-f(x)\|_n<\veps$.\qed


\subsection{Proof of Corollary \ref{cor:KP}} Let $K$ and $P\subset\R^p$ satisfy the conditions in the statement. Apply Lemma \ref{lem:approximation} to $S:=P$, $\Phi:=\id_P$, $L:=K$, $g:=\id_P$ and $\eta:=2^{-n}$ for each $n\in\N$. We obtain a map $\iota^\nu_n\in{\mathcal S}^\nu(P,P)$ and an open semialgebraic neighborhood $W_\sigma$ of $\sigma$ in $P$ for each $\sigma\in K$ such that: 
\begin{itemize}
\item $\iota^\nu_n(W_\sigma)\subset\sigma$ for each $\sigma\in K$ and
\item $\|x-\iota^{\nu}_n(x)\|_m<2^{-n}$ for each $x\in S$.
\end{itemize}
Thus, the sequence $\{\iota^{\nu}_n\}_n$ converges to the identity map in ${\mathcal S}^0(P,P)$. Consequently, if $f\in {\mathcal S}^0(P)$, the sequence $\{f\circ\iota^\nu_n\}_{n\in\N}$ converges to $f$ in ${\mathcal S}^0(P)$. In addition, if $f|_\sigma\in {\mathcal S}^\nu(\sigma)$ for each $\sigma\in K$, each function $f\circ\iota^\nu_n$ is an ${\mathcal S}^\nu$ function because so is the restriction $(f\circ\iota^\nu_n)|_{W_\sigma}=f|_\sigma\circ\iota^\nu_n|_{W_\sigma}$ for each $\sigma\in K$, as required.\qed

\section{${\mathcal S}^\nu$ weak retractions}\label{s3}

In this section we construct ${\mathcal S}^\nu$ weak retractions $\rho:W\to X$ of open semialgebraic neighborhoods $W$ of a Nash normal-crossings divisor $X$ of a Nash manifold $M$ (Proposition \ref{wr}). Recall that ${\mathcal S}^\nu$ weak retractions $\rho:W\to X$ are ${\mathcal S}^\nu$ maps that are close to the identity on $X$ in the ${\mathcal S}^0$ topology. Their construction requires a preliminary result concerning compatible Nash retractions (Proposition \ref{compatretract}), which is of independent interest. The ${\mathcal S}^\nu$ weak retractions will be used in Section \ref{s4} to prove Theorem \ref{main1}.

\subsection{Compatible Nash retractions}\label{compatretract0}

Let $M\subset\R^m$ be a $d$-dimensional Nash manifold and let $X$ be a Nash subset of $M$. We say that $X$ is a \em Nash normal-crossings divisor of $M$ \em if: 
\begin{itemize}
\item for each point $x\in X$ there exists an open semialgebraic neighborhood $U\subset M$ of $x$ and a Nash diffeomorphism $\varphi:=(\x_1,\ldots,\x_d):U\to\R^d$ such that $\varphi(x)=0$ and $\varphi(X\cap U)=\{\x_1\cdots\x_r=0\}$ for some $r=1,\ldots,d$, and 
\item the (Nash) irreducible components of $X$ are Nash manifolds (of dimension $d-1$). 
\end{itemize}
Assume in the following that $X$ is a Nash normal-crossings divisor of $M$. For each $\ell\geq 2$ define inductively $\Sing_\ell(X):=\Sing(\Sing_{\ell-1}(X))$ and $\Sing_1(X):=\Sing(X)$. The irreducible components of $\Sing_\ell(X)$ are Nash manifolds for each $\ell\geq1$ such that $\Sing_\ell(X)\neq\varnothing$. In fact, if $Y_{\ell,1},\ldots,Y_{\ell,s_\ell}$ are the irreducible components of $\Sing_\ell(X)$, then $\Sing_{\ell+1}(X)=\bigcup_{i\neq j}(Y_{\ell,i}\cap Y_{\ell,j})$. For simplicity we write $\Sing_0(X)=X$ and $\Sing_{-1}(X)=M$.

We claim: \em If $\Sing_\ell(X)\neq\varnothing$, then $\dim(\Sing_\ell(X))=d-\ell-1$\em. 
\begin{proof}
The cases $\ell=-1,0$ are evident. Assume $\ell\geq 1$: As $\dim(X)=d-1$ and $\dim(\Sing_k(X))<\dim(\Sing_{k-1}(X))$, if $\Sing_\ell(X)\neq\varnothing$, then $\dim(\Sing_\ell(X))\leq d-\ell-1$. Pick a point $x\in\Sing_\ell(X)$. Let $U\subset M$ be an open semialgebraic neighborhood of $x$ endowed with a Nash diffeomorphism $\varphi:=(\x_1,\ldots,\x_d):U\to\R^d$ such that $\varphi(x)=0$ and $\varphi(U\cap X)=\{\x_1\cdots\x_\alpha=0\}$ for some $\alpha=1,\ldots,d$. Observe that $\{\x_1=0,\ldots,\x_\alpha=0\}\subset\varphi(\Sing_\ell(X)\cap U)$, so $d-\alpha\leq\dim(\Sing_\ell(X))\leq d-\ell-1$, that is, $\ell+1\leq\alpha$. Now, one proves inductively that $\{\x_1=0,\ldots,\x_{\ell}=0\}$ and $\{\x_2=0,\ldots,\x_{\ell+1}=0\}$ are contained in $\varphi(\Sing_{\ell-1}(X)\cap U)$, so $\{\x_1=0,\ldots,\x_{\ell+1}=0\}\subset\varphi(\Sing_\ell(X)\cap U)$ and $\dim(\Sing_\ell(X))\geq d-\ell-1$.
\end{proof}

We assume that $M$ is irreducible, that is, it is a connected Nash manifold. Let $r\geq0$ be such that $\Sing_r(X)\neq\varnothing$ but $\Sing_{r+1}(X)=\varnothing$. Let $Z$ be an irreducible component of $\Sing_t(X)\neq\varnothing$ for some $0\leq t\leq r$. A Nash retraction $\rho:W\to Z$, where $W\subset M$ is an open semialgebraic neighborhood of $Z$, is \em compatible with $X$ \em if $\rho(X_i\cap W)=X_i\cap Z$ for each irreducible component $X_i$ of $X$ such that $X_i\cap Z\neq\varnothing$.

\begin{prop}[Compatible Nash retractions]\label{compatretract}
There exist an open semialgebraic neighborhood $W\subset M$ of $Z$ and a Nash retraction $\rho:W\to Z$ that is compatible with $X$. In addition, $\rho(Y\cap W)=Y\cap Z$ for each irreducible component $Y$ of $\Sing_\ell(X)$ where $\ell\geq1$.
\end{prop}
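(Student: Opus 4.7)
The plan is to realize $\rho$ as the bundle projection of a Nash tubular neighborhood of $Z$ in $M$ that is aligned with the irreducible components of $X$. First I would establish the local model around each point $z\in Z$: since $X$ is a normal-crossings divisor and $Z$ is an irreducible component of $\Sing_t(X)$, the irreducibility of $Z$ combined with the normal-crossings condition forces exactly $t+1$ irreducible components $X_{i_1},\ldots,X_{i_{t+1}}$ of $X$ to contain $Z$, and $\codim_M(Z)=t+1$. In a Nash chart at $z$ with coordinates $(\x_1,\ldots,\x_d)$, one has $Z=\{\x_1=\cdots=\x_{t+1}=0\}$, each $X_{i_j}=\{\x_j=0\}$ locally for $j\leq t+1$, and every other component $X_i$ of $X$ meeting the chart is locally $\{\x_k=0\}$ for some $k>t+1$. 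In these coordinates the map $(\x_1,\ldots,\x_d)\mapsto(0,\ldots,0,\x_{t+2},\ldots,\x_d)$ is a Nash retraction onto the local piece of $Z$ that manifestly sends $X_i\cap U$ onto $X_i\cap Z\cap U$ for every component $X_i$ of $X$ meeting $U$; this is the local model I aim to globalize.

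To produce the global retraction I would invoke Shiota's Nash tubular neighborhood theorem (\cite[I.3.5 \& II.6.1]{sh}) applied to $Z\subset M$, but with a splitting of the normal bundle $N$ of $Z$ in $M$ dictated by $X$. More precisely, I would construct a Nash splitting $N=N_1\oplus\cdots\oplus N_{t+1}$, where the fiber of $N_j$ at $z\in Z$ is the image in $T_zM/T_zZ$ of a Nash complement of $T_zX_{i_j}$ in $T_zM$; the normal-crossings hypothesis ensures that these $t+1$ lines are everywhere independent, so the sum is direct. Then I would build a Nash diffeomorphism $\Phi:E\to W$ from an open semialgebraic neighborhood $E$ of the zero section of $N_1\oplus\cdots\oplus N_{t+1}$ onto an open semialgebraic neighborhood $W$ of $Z$ in $M$, with the properties that $\Phi$ restricts to the identity on $Z$ (the zero section) and $\Phi\big(E\cap\bigoplus_{k\neq j}N_k\big)=X_{i_j}\cap W$ for each $j$. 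The retraction would then be $\rho:=p\circ\Phi^{-1}:W\to Z$, where $p$ denotes the bundle projection.

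The required compatibility checks would then reduce to the local model: for an irreducible component $X_i$ with $Z\subset X_i$ one has $X_i\in\{X_{i_1},\ldots,X_{i_{t+1}}\}$ and $\rho(X_i\cap W)=Z=X_i\cap Z$ by construction; for $X_i$ with $Z\not\subset X_i$ but $X_i\cap Z\neq\varnothing$ the local chart exhibits $X_i$ as $\{\x_k=0\}$ with $k>t+1$, and the projection fixes $\x_k$, giving $\rho(X_i\cap W)=X_i\cap Z$. The statement for an irreducible component $Y$ of $\Sing_\ell(X)$ is handled in the same way, since $Y$ is locally cut out by the vanishing of some subset of $\x_1,\ldots,\x_\alpha$ and such coordinate subspaces are preserved by the projection; a finite covering of $Z$ by such charts, combined with possibly shrinking $W$, yields the equality $\rho(Y\cap W)=Y\cap Z$ globally.

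The main obstacle I foresee is the second step, namely arranging $\Phi$ so that each sub-bundle $\bigoplus_{k\neq j}N_k$ is mapped onto $X_{i_j}\cap W$; a bare application of the Nash tubular neighborhood theorem produces some retraction but there is no reason its fibers are aligned with the $X_{i_j}$. I would overcome this by an inductive construction, first applying Shiota's theorem to the codimension-one Nash submanifold $X_{i_{t+1}}\subset M$ to obtain a Nash retraction that collapses the $N_{t+1}$-direction onto $X_{i_{t+1}}$, then restricting to $X_{i_{t+1}}$ and applying the theorem to $X_{i_t}\cap X_{i_{t+1}}\subset X_{i_{t+1}}$, and so on down to $Z$. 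At each step the transversality encoded in the normal-crossings hypothesis should guarantee that the previously handled components remain preserved by the next retraction, so that the composition of these partial Nash retractions yields the desired $\rho$.
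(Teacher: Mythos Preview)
Your strategy handles the $t+1$ irreducible components $X_{i_1},\ldots,X_{i_{t+1}}$ that \emph{contain} $Z$, but it does not control the components $X_i$ (and, more generally, the components $Y$ of $\Sing_\ell(X)$) that merely \emph{meet} $Z$ without containing it. For those you write that ``the local chart exhibits $X_i$ as $\{\x_k=0\}$ with $k>t+1$, and the projection fixes $\x_k$''. But the projection that fixes $\x_k$ is the local coordinate projection, not the global bundle projection $p\circ\Phi^{-1}$; once you glue, there is no reason $\Phi$ carries the locus $\{\x_k=0\}$ in the fibers onto $X_i\cap W$. Concretely, take $M=\R^3$, $X_1=\{x=0\}$, $X_2=\{y=0\}$, $X_3=\{z=0\}$ and $Z=X_1\cap X_2$. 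The Nash diffeomorphism $\Phi(z;a,b)=(a,b,z+ab)$ satisfies your alignment condition $\Phi(\{a=0\})=X_1$ and $\Phi(\{b=0\})=X_2$, yet the induced retraction $\rho(x,y,w)=(0,0,w-xy)$ sends $X_3\cap W$ onto a whole neighborhood of the origin in $Z$, not onto $X_3\cap Z=\{0\}$. The inductive scheme in your last paragraph does not repair this: it only iterates through $X_{i_{t+1}},\ldots,X_{i_1}$, and the sentence ``transversality \ldots should guarantee that the previously handled components remain preserved'' is not justified---a standard Nash tubular retraction onto one hypersurface need not preserve a transverse one.

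The paper's proof has a different architecture that is designed precisely to handle all components touching $Z$ at once. It runs an \emph{inverse} induction on the strata $\Sing_\ell(X)$, from $\ell=r$ (where the components of $\Sing_r(X)$ meeting $Z$ lie inside $Z$ and one takes identity retractions) down to $\ell=-1$. At level $\ell$ one constructs, for \emph{every} irreducible component $Y_{\ell,j}$ of $\Sing_\ell(X)$ meeting $Z$, a Nash retraction $\rho_{\ell,j}:Y_{\ell,j}\cap W_\ell\to Y_{\ell,j}\cap Z$, and these are required to agree on all pairwise intersections. The inductive step is not powered by transversality alone: it uses the Nash extension theorem for maps defined on normal-crossings divisors \cite[Thm.~1.6 \& Prop.~7.6]{bfr} to extend the family $\{\rho_{\ell+1,k}\}$ to a Nash map on $Y_{\ell,j}$, and then composes with an ambient tubular retraction onto $Y_{\ell,j}\cap Z$. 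This extension result is the missing ingredient in your outline; without it, compatibility with the transverse components cannot be enforced.
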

\begin{proof}
Fix $\ell\geq0$ such that $\Sing_\ell(X)\neq\varnothing$ and let $Y$ be one of its irreducible components. As $X$ is a Nash normal-crossings divisor of $M$, the intersection $Y\cap Z$ is a Nash manifold. If $Y\cap Z\neq\varnothing$, we consider an open tubular semialgebraic neighborhood $N_Y\subset\R^m$ of $Y\cap Z$ endowed with a Nash retraction $\rho_Y:N_Y\to Y\cap Z$, see \cite[8.9.5]{bcr}. Assume $N_Y\cap N_{Y'}=\varnothing$ if $Y'$ is an irreducible component of $\Sing_\ell(X)$ such that $Y'\cap Z\neq\varnothing$ and $Y\cap Y'\cap Z=\varnothing$. Denote $Y_{\ell,1},\ldots,Y_{\ell,s_\ell}$ the irreducible components of $\Sing_\ell(X)$ for each $-1\leq\ell\leq r$. In particular, $M=\Sing_{-1}(X)=Y_{-1,1}$.

\noindent{\sc Claim: }\em
There exist open semialgebraic neighborhoods 
\[
W_{-1}\subset W_0\subset\cdots\subset W_\ell\subset W_{\ell+1}\subset\cdots\subset W_r
\] 
of $Z$ in $M$ such that $Y_{\ell,j}\cap W_\ell=\varnothing$ if $Y_{\ell,j}\cap Z=\varnothing$ and Nash retractions $\rho_{\ell,j}: Y_{\ell,j}\cap W_\ell\to Y_{\ell,j}\cap Z$ whenever $Y_{\ell,j}\cap Z\neq\varnothing$ satisfying the following compatibility conditions:
\begin{align}
&\rho_{\ell,j_1}|_{Y_{\ell,j_1}\cap Y_{\ell,j_2}\cap W_\ell}=\rho_{\ell,j_2}|_{Y_{\ell,j_1}\cap Y_{\ell,j_2}\cap W_\ell}\quad\text{if $1\leq j_1,j_2\leq s_\ell$},\label{compat}\\
&\rho_{\ell,j}|_{Y_{\ell',k}\cap W_\ell}=\rho_{\ell',k}|_{Y_{\ell',k}\cap W_\ell}\quad\text{for each $\ell',k,j$ such that $Y_{\ell',k}\subset Y_{\ell,j}$}.\label{compat0}
\end{align}
\em

Assume the {\sc Claim} proved for a while. Define $W:=W_{-1}\subset M$, which is an open semialgebraic neighborhood of $Z$, and $\rho:=\rho_{-1,1}:W=Y_{-1,1}\cap W\to Z$, which is a Nash retraction such that $\rho(X_i\cap W)=X_i\cap Z$ for each irreducible component $X_i$ of $X$ satisfying $X_i\cap Z\neq\varnothing$, that is, $\rho$ is compatible with $X$. In addition, $\rho(Y\cap W)=Y\cap Z$ for each irreducible component $Y$ of $\Sing_\ell(X)$ where $\ell\geq1$. Thus, we are reduced to prove the {\sc Claim} above by inverse induction on $\ell$.

\noindent{\sc Step 1:} If $\ell=r$, then $\Sing_{r+1}(X)=\varnothing$, so $\Sing_r(X)$ is a Nash manifold and its irreducible components $Y_{r,j}$ are its connected components. Define $W_r:=M\setminus\bigcup_{Y_{r,j}\cap Z=\varnothing}Y_{r,j}$. We claim: \em if $Y_{r,j}\cap Z\neq\varnothing$, it holds $Y_{r,j}\subset Z$\em. 

Pick a point $x\in Y_{r,j}\cap Z$. Let $U\subset M$ be an open semialgebraic neighborhood of $x$ endowed with a Nash diffeomorphism $\varphi:=(\x_1,\ldots,\x_d):U\to\R^d$ such that $\varphi(x)=0$ and $\varphi(U\cap X)=\{\x_1\cdots\x_\alpha=0\}$ for some $\alpha=1,\ldots,d$. Both $\varphi(U\cap Z)$ and $\varphi(U\cap Y_{r,j})$ are linear coordinate varieties contained in $\{\x_1\cdots\x_\alpha=0\}$ that contain the linear coordinate variety $\{\x_1=0,\ldots,\x_\alpha=0\}$. As $\Sing_{r+1}(X)=\varnothing$, we deduce $\alpha=r+1$ and $\varphi(U\cap Y_{r,j})=\{\x_1=0,\ldots,\x_\alpha=0\}\subset\varphi(U\cap Z)$. Therefore, $U\cap Y_{r,j}\subset U\cap Z$ and by the identity principle $Y_{r,j}\subset Z$, because $Y_{r,j}$ is irreducible. 

Consequently, we define in this case $\rho_{r,j}:=\id_{Y_{r,j}}$.

\noindent{\sc Step 2:} Assume the {\sc Claim} true for $\ell+1,\ldots,r$ and let us check that it is also true for $\ell$.

We have $\Sing(\Sing_\ell(X))=\Sing_{\ell+1}(X)$ and the irreducible components of the set $\Sing_{\ell+1}(X)$ are denoted $Y_{\ell+1,1},\ldots,Y_{\ell+1,s_{\ell+1}}$. By induction hypothesis there exist an open semialgebraic neighborhood $W_{\ell+1}\subset M$ of $Z$ such that $Y_{\ell+1,k}\cap W_{\ell+1}=\varnothing$ if $Y_{\ell+1,k}\cap Z=\varnothing$ and Nash retractions $\rho_{\ell+1,k}:Y_{\ell+1,k}\cap W_{\ell+1}\to Y_{\ell+1,k}\cap Z$ if $Y_{\ell+1,k}\cap Z\neq\varnothing$ satisfying: 
\begin{align}
&\rho_{\ell+1,k_1}|_{Y_{\ell+1,k_1}\cap Y_{\ell+1,k_2}\cap W_{\ell+1}}=\rho_{\ell+1,k_2}|_{Y_{\ell+1,k_1}\cap Y_{\ell+1,k_2}\cap W_{\ell+1}}\quad\text{if $1\leq k_1,k_2\leq s_{\ell+1}$}\label{compat2},\\
&\rho_{\ell+1,k}|_{Y_{\ell',i}\cap W_{\ell+1}}=\rho_{\ell',i}|_{Y_{\ell',i}\cap W_{\ell+1}}\quad\text{for each $\ell',i,k$ such that $Y_{\ell',i}\subset Y_{\ell+1,k}$.}\label{compat20}
\end{align}

Pick $j=1,\ldots,s_\ell$ such that $Y_{\ell,j}\cap Z\neq\varnothing$. If $Y_{\ell,j}\subset Z$, we take $\rho_{\ell,j}:=\id_{Y_{\ell,j}}$ and we are done, so let $J_\ell:=\{j=1,\ldots,s_\ell:\ Y_{\ell,j}\cap Z\neq\varnothing\ \text{and}\ Y_{\ell,j}\not\subset Z\}$ and assume $j\in J_\ell$. 

\label{asbefore} We claim: \em $Y_{\ell,j}\cap Z$ is a Nash manifold of dimension $d-\ell^*-1$ for some $\ell+1\leq\ell^*\leq r$ and it is a union of irreducible components of $\Sing_{\ell^*}(X)\subset\Sing_{\ell+1}(X)$\em. 

As $X$ is a Nash normal-crossings divisor of $M$, the intersection $Y_{\ell,j}\cap Z$ is a Nash manifold. Pick a point $x\in Y_{\ell,j}\cap Z$. Let $U\subset M$ be an open semialgebraic neighborhood of $x$ endowed with a Nash diffeomorphism $\varphi:U\to\R^d$ such that $\varphi(x)=0$ and $\varphi(U\cap X)=\{\x_1\cdots\x_\alpha=0\}$ as above. Both $\varphi(U\cap Z)$ and $\varphi(U\cap Y_{\ell,j})$ are linear coordinate varieties contained in $\{\x_1\cdots\x_\alpha=0\}$ that contain the linear coordinate variety $\{\x_1=0,\ldots,\x_\alpha=0\}$. 

Assume $\varphi(U\cap Z)=\{\x_1=0,\ldots,\x_\beta=0\}$ and $\varphi(U\cap Y_{\ell,j})=\{\x_1=0,\ldots,\x_{\gamma}=0,\x_{\beta+1}=0,\ldots,\x_{\beta-\gamma+\ell+1}=0\}$ for some $\gamma\leq\beta\leq\alpha$ and $\beta-\gamma+\ell+1\leq\alpha$. Define $\ell^*:=\beta-\gamma+\ell$. We have $\gamma<\beta$ because $Y_{\ell,j}\not\subset Z$, so $\ell+1\leq\ell^*$. As 
\[
\varphi(U\cap Y_{\ell,j}\cap Z)=\{\x_1=0,\ldots,\x_{\ell^*+1}=0\},
\]
we deduce $Y_{\ell,j}\cap Z$ is a union of irreducible components of $\Sing_{\ell^*}(X)$, as claimed.

 Let $I_{\ell,j}:=\{i=1,\ldots,s_{\ell^*}:\ Y_{\ell^*,i}\subset Y_{\ell,j}\cap Z\}$ and observe that $Y_{\ell,j}\cap Z=\bigcup_{i\in I_{\ell,j}}Y_{\ell^*,i}$. As $Y_{\ell,j}\cap Z$ is a Nash manifold, the $Y_{\ell^*,i}$ are pairwise disjoint for $i\in I_{\ell,j}$, so the tubular neighborhoods $N_{Y_{\ell^*,i}}$ are pairwise disjoint for $i\in I_{\ell,j}$. Thus, the Nash map
\begin{equation}\label{define}
\rho_{\ell,j}^*:\bigcup_{i\in I_{\ell,j}}N_{Y_{\ell^*,i}}\to Y_{\ell,j}\cap Z=\bigcup_{i\in I_{\ell,j}}Y_{\ell^*,i},\ x\mapsto\rho_{Y_{\ell^*,i}}(x)\quad\text{if $x\in N_{Y_{\ell^*,i}}$} 
\end{equation}
is well-defined and satisfies $\rho_{\ell,j}^*|_{Y_{\ell,j}\cap Z}=\id_{Y_{\ell,j}\cap Z}$, so it is a Nash retraction.

 Define $K_{\ell,j}:=\{k=1,\ldots,s_{\ell+1}:\ Y_{\ell+1,k}\subset Y_{\ell,j}\}$. Observe that $\bigcup_{k\in K_{\ell,j}}Y_{\ell+1,k}$ is a Nash normal-crossings divisor of the Nash manifold $Y_{\ell,j}$. We claim:
\begin{equation} \label{eq:sing-ell}
\Sing_{\ell+1}(X)\cap Y_{\ell,j}=\bigcup_{k\in K_{\ell,j}}Y_{\ell+1,k}.
\end{equation}

Pick a point $x\in\Sing_{\ell+1}(X)\cap Y_{\ell,j}$. Let $U\subset M$ be an open semialgebraic neighborhood of $x$ endowed with a Nash diffeomorphism $\varphi:U\to\R^d$ such that $\varphi(x)=0$ and $\varphi(U\cap X)=\{\x_1\cdots\x_\alpha=0\}$. Both $\varphi(\Sing_{\ell+1}(X)\cap U)$ and $\varphi(U\cap Y_{\ell,j})$ are linear coordinate varieties con\-tained in $\{\x_1\cdots\x_\alpha=0\}$ that contain the linear coordinate variety $\{\x_1=0,\ldots,\x_\alpha=0\}$. As $x\in\Sing_{\ell+1}(X)$, it holds $\alpha\geq\ell+2$. We may assume $\varphi(U\cap Y_{\ell,j})=\{\x_1=0,\ldots,\x_{\ell+1}=0\}$. Observe that $\{\x_1=0,\ldots,\x_{\ell+2}=0\}\subset\varphi(\Sing_{\ell+1}(X)\cap U)$, so there exists $k=1,\ldots,s_{\ell+1}$ such that 
\[
\varphi(Y_{\ell+1,k}\cap U)=\{\x_1=0,\ldots,\x_{\ell+2}=0\}\subset\{\x_1=0,\ldots,\x_{\ell+1}=0\}=\varphi(U\cap Y_{\ell,j}).
\]
As $Y_{\ell+1,k}$ is irreducible, $Y_{\ell+1,k}\subset Y_{\ell,j}$, so $k\in K_{\ell,j}$. Thus, $x\in\bigcup_{k\in K_{\ell,j}}Y_{\ell+1,k}$. The converse inclusion $\bigcup_{k\in K_{\ell,j}}Y_{\ell+1,k}\subset\Sing_{\ell+1}(X)\cap Y_{\ell,j}$ is clear.

 Fix $\nu\geq\dim(M)=d$. Combining \eqref{compat2}, \eqref{eq:sing-ell} and \cite[Thm.1.6 \& Prop.7.6]{bfr}, we deduce the existence of a Nash extension 
\[
f_{\ell,j}:Y_{\ell,j}\cap W_{\ell+1}\to\R^n
\] 
such that $f_{\ell,j}|_{Y_{\ell+1,k}\cap W_{\ell+1}}=\rho_{\ell+1,k}|_{Y_{\ell+1,k}\cap W_{\ell+1}}$ for each $k\in K_{\ell,j}$. 

 As $\rho_{\ell+1,k}|_{Y_{\ell+1,k}\cap Z}=\id_{Y_{\ell+1,k}\cap Z}$ for each $k\in K_{\ell,j}$ and 
\[
Y_{\ell,j}\cap Z=\bigcup_{i\in I_{\ell,j}}Y_{\ell^*,i}\subset\Sing_{\ell+1}(X)\cap Y_{\ell,j}\cap Z=\bigcup_{k\in K_{\ell,j}}(Y_{\ell+1,k}\cap Z),
\]
we deduce $f_{\ell,j}|_{Y_{\ell,j}\cap Z}=\id_{Y_{\ell,j}\cap Z}$. The semialgebraic set $U_{\ell,j}:=f_{\ell,j}^{-1}(\bigcup_{i\in I_{\ell,j}}N_{Y_{\ell^*,i}})$ is an open semialgebraic subset of $Y_{\ell,j}$ that contains $Y_{\ell,j}\cap Z=\bigcup_{i\in I_{\ell,j}}Y_{\ell^*,i}$. 

The semialgebraic set $C_\ell:=\bigcup_{j\in J_\ell}(Y_{\ell,j}\setminus U_{\ell,j})$ is a closed semialgebraic subset of $M$ and $Z\cap C_\ell=\varnothing$, because $Y_{\ell,j}\cap Z\subset U_{\ell,j}$ for each $j\in J_\ell$ and
\[
Z\cap C_\ell=\bigcup_{j\in J_\ell}((Y_{\ell,j}\cap Z)\setminus U_{\ell,j})=\varnothing.
\]
Define 
\[
W_\ell:=W_{\ell+1}\setminus\Big(C_\ell\cup\bigcup_{Y_{\ell,k}\cap Z=\varnothing}Y_{\ell,k}\Big)\subset W_{\ell+1}.
\] 
Observe that $Z\subset W_\ell$ and $Y_{\ell,j}\cap W_\ell\subset Y_{\ell,j}\setminus C_\ell\subset Y_{\ell,j}\setminus(Y_{\ell,j}\setminus U_{\ell,j})\subset U_{\ell,j}$ for each $j\in J_\ell$. 

Consider the composition
\[
\rho_{\ell,j}:=\rho_{\ell,j}^*\circ f_{\ell,j}|_{Y_{\ell,j}\cap W_\ell}:Y_{\ell,j}\cap W_\ell\subset U_{\ell,j}\overset{f_{\ell,j}|_{U_{\ell,j}}}{\longrightarrow}\bigcup_{i\in I_{\ell,j}}N_{Y_{\ell^*,i}}\overset{\rho_{\ell,j}^*}{\longrightarrow} Y_{\ell,j}\cap Z,
\] 
where $\rho_{\ell,j}^*$ is the Nash retraction defined in \eqref{define} for each $j\in J_\ell$. 

 Note that $\rho_{\ell,j}:Y_{\ell,j}\cap W_\ell\to Y_{\ell,j}\cap Z$ is a Nash retraction such that $\rho_{\ell,j}|_{Y_{\ell+1,k}\cap W_\ell}=\rho_{\ell+1,k}|_{Y_{\ell+1,k}\cap W_\ell}$ for each $k\in K_{\ell,j}$, because 
\begin{multline*}
f_{\ell,j}|_{Y_{\ell+1,k}\cap W_\ell}=\rho_{\ell+1,k}|_{Y_{\ell+1,k}\cap W_\ell},\quad f_{\ell,j}(Y_{\ell+1,k}\cap W_\ell)=Y_{\ell+1,k}\cap Z\\
\text{and}\quad\rho_{\ell,j}^*|_{Y_{\ell,j}\cap Z}=\id_{Y_{\ell,j}\cap Z}. 
\end{multline*}

Let $\ell',i,j$ be such that $Y_{\ell',i}\subset Y_{\ell,j}$ and $\ell'\geq\ell+1$. Then
\[
Y_{\ell',i}\subset\Sing_{\ell'}(X)\cap Y_{\ell,j}\subset\Sing_{\ell+1}(X)\cap Y_{\ell,j}=\bigcup_{k\in K_{\ell,j}}Y_{\ell+1,k}.
\]
As $Y_{\ell',i}$ is irreducible, there exists $k\in K_{\ell,j}$ such that $Y_{\ell',i}\subset Y_{\ell+1,k}$. We have by \eqref{compat20}
\[
\rho_{\ell,j}|_{Y_{\ell',i}\cap W_\ell}=(\rho_{\ell,j}|_{Y_{\ell+1,k}\cap W_\ell})|_{Y_{\ell',i}\cap W_\ell}=(\rho_{\ell+1,k}|_{Y_{\ell+1,k}\cap W_\ell})|_{Y_{\ell',i}\cap W_\ell}=\rho_{\ell',i}|_{Y_{\ell',i}\cap W_\ell}.
\]
If $\ell'=\ell$ and $Y_{\ell,i}\subset Y_{\ell,j}$, we deduce $Y_{\ell,i}=Y_{\ell,j}$, that is, $i=j$ and there is nothing to prove. Consequently, the Nash retractions $\rho_{\ell,j}$ satisfy condition \eqref{compat0}.

 Let $1\leq j_1,j_2\leq s_\ell$ be such that $Y_{\ell,j_1}\cap Y_{\ell,j_2}\cap W_\ell\neq\varnothing$. Proceeding analogously to \ref{asbefore}, one shows that the intersection $Y_{\ell,j_1}\cap Y_{\ell,j_2}$ is a Nash manifold whose connected components are (pairwise disjoint) irreducible components $Y_{\ell',i}$ of $\Sing_{\ell'}(X)$ for some $\ell'\geq\ell+1$, which are obviously contained in $Y_{\ell,j_k}$ for $k=1,2$. As $\rho_{\ell,j_k}|_{Y_{\ell',i}\cap W_\ell}=\rho_{\ell',i}|_{Y_{\ell',i}\cap W_\ell}$ for each $\ell',i,\ell$ such that $Y_{\ell',i}\subset Y_{\ell,j_k}$, we conclude
\begin{equation*}
\rho_{\ell,j_1}|_{Y_{\ell,j_1}\cap Y_{\ell,j_2}\cap W_\ell}=\rho_{\ell,j_2}|_{Y_{\ell,j_1}\cap Y_{\ell,j_2}\cap W_\ell},
\end{equation*}
so the Nash retractions $\rho_{\ell,j}$ satisfy condition \eqref{compat}. This finishes the induction step and we are done.
\end{proof}

\subsection{${\mathcal S}^\nu$ weak retractions}
Fix a Nash manifold $M\subset\R^m$ of dimension $d$, a Nash normal-crossings divisor $X$ of $M$ and an integer $\nu\geq1$. The purpose of this subsection is to prove the existence of ${\mathcal S}^\nu$ weak retractions, that is, ${\mathcal S}^\nu$ maps $\rho:W\to X$, where $W\subset M$ is an open semialgebraic neighborhood of $X$, that are ${\mathcal S}^0$ close to the identity on $X$. Namely,

\begin{prop}[${\mathcal S}^\nu$ weak retractions]\label{wr}
There exist ${\mathcal S}^\nu$ weak retractions $\rho:W\to X$ that are ${\mathcal S}^0$ arbitrarily close to the identity on $X$. More explicitly, there exists an open semialgebraic neighborhood $W \subset M$ of $X$ with the following property: for each neighborhood $\mathcal U \subset {\mathcal S}^0(X,X)$ of $\id_X$, there exists a map $\rho \in {\mathcal S}^\nu(W,X)$ such that $\rho|_X \in {\mathcal U}$.
\end{prop}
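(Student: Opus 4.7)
The plan is to prove the proposition by induction on the depth $r$ of the stratification of $X$ by its iterated singular loci $\{\Sing_\ell(X)\}_{\ell\geq 0}$, where $r$ denotes the largest integer with $\Sing_r(X)\neq\varnothing$ (and $r=-1$ when $X$ is non-singular). The base case $r=-1$ is immediate: the irreducible components of $X$ are pairwise disjoint closed Nash submanifolds of $M$, and disjoint Nash tubular neighborhoods furnish a genuine Nash retraction $W\to X$, which is in particular a ${\mathcal S}^\nu$ weak retraction whose restriction to $X$ is already $\id_X$.

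For the inductive step, I would apply Proposition \ref{compatretract} with $Z$ ranging over the irreducible components $Y_{r,1},\ldots,Y_{r,s_r}$ of the deepest stratum $\Sing_r(X)$, which are pairwise disjoint closed Nash submanifolds of $M$ since $\Sing_{r+1}(X)=\varnothing$. This produces compatible Nash retractions $\sigma_j\colon V_j\to Y_{r,j}$ on pairwise disjoint open semialgebraic neighborhoods; moreover, by the ``In addition'' clause each $\sigma_j$ respects every stratum of $X$. After shrinking the $V_j$'s to suitable closed tubular sub-neighborhoods, the divisor $X$ appears, on the complement, as a Nash normal-crossings divisor $X'$ of some Nash submanifold $M'\subset M$ with depth at most $r-1$, and the inductive hypothesis furnishes a ${\mathcal S}^\nu$ weak retraction $\rho'\colon W'\to X'$ with $\rho'|_{X'}$ as ${\mathcal S}^0$-close to $\id_{X'}$ as needed. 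Then I would fix a ${\mathcal S}^\nu$ partition of unity $\{\theta',\theta_1,\ldots,\theta_{s_r}\}$ on a neighborhood $W$ of $X$ subordinated to $\{W',V_1,\ldots,V_{s_r}\}$, with $\theta_j\equiv 1$ on a small neighborhood of $Y_{r,j}$.

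The hard part is combining $\rho'$ with the $\sigma_j$'s into one map $\rho\in{\mathcal S}^\nu(W,X)$: the naive ambient-space convex combination $\theta'\rho'+\sum_j\theta_j\sigma_j$ is $\mathcal{S}^\nu$ but need not take values in $X$, since $X$ is not convex. The resolution exploits the local product structure at a normal-crossings point $x\in X$: pick a local chart $\varphi\colon U\to\R^d$ with $\varphi(X\cap U)=\{\x_1\cdots\x_k=0\}$, and in these coordinates replace the naive combination by a map whose $i$-th coordinate (for $i\leq k$) is multiplied by a factor of the form $1-\phi_i$, where $\phi_i$ is a ${\mathcal S}^\nu$ bump function equal to $1$ in a neighborhood of $\{\x_i=0\}$ inside the chart. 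This forces at least one of the first $k$ coordinates to vanish, so the combined map is $X$-valued; the key input making these local prescriptions patch into a global ${\mathcal S}^\nu$ map is precisely the stratification-compatibility provided by Proposition \ref{compatretract}, which matches $\sigma_j$ with $\rho'$ on each common face of $X$ in the overlap $V_j\cap W'$.

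Finally, the closeness of $\rho|_X$ to $\id_X$ in the ${\mathcal S}^0$ topology is obtained by choosing the supports of the bump functions $\phi_i$ (equivalently, the transition regions where the partition of unity is non-constant) to be contained in arbitrarily thin neighborhoods of the corresponding coordinate hyperplanes: each $\sigma_j$ fixes $Y_{r,j}$ pointwise, $\rho'|_{X'}$ is close to $\id_{X'}$ by induction, and the uniform diameter of the fibres involved in the gluing can be made smaller than any prescribed continuous positive function on $X$, so that $\rho|_X$ lies in the prescribed ${\mathcal S}^0$ neighborhood $\mathcal U$ of $\id_X$.
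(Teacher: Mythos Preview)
Your inductive scheme on the depth $r$ of the stratification is a different route from the paper's, and the gluing step you describe as ``the hard part'' is a genuine gap. The maps you want to combine have incompatible targets: each $\sigma_j$ lands in the deepest stratum $Y_{r,j}$, while $\rho'$ lands in $X'=X\cap M'$, which is disjoint from the $Y_{r,j}$'s. The stratification-compatibility of Proposition~\ref{compatretract} only says $\sigma_j$ preserves each $X_i$; it does \emph{not} match $\sigma_j$ with $\rho'$ on the overlap, so nothing forces the two pieces to agree. Your proposed fix, multiplying the $i$-th chart coordinate of the naive convex combination by $1-\phi_i$, is chart-dependent: the normal-crossings charts do not form a global atlas with transition functions preserving coordinate hyperplanes, so the local prescriptions need not patch to a single ${\mathcal S}^\nu$ map $W\to X$. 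You also have not explained why the map so produced is ${\mathcal S}^\nu$ at points of $Y_{r,j}$, where $\rho'$ is not even defined.

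The paper avoids gluing altogether by working component by component of $X$ itself rather than of the deepest stratum, and by using \emph{composition} instead of convex combination. For each irreducible component $X_j$ of $X$ it builds a ${\mathcal S}^\nu$ map $\Psi_j:M\to M$ that is the identity outside a thin tube around $X_j$, collapses a smaller tube onto $X_j$, preserves every other component $X_k$, and is ${\mathcal S}^0$-close to $\id_M$. The compatible retraction of Proposition~\ref{compatretract} is used not to glue, but to set up a product structure $(\rho_j,h_j):W_j\to X_j\times(-\veps_j,\veps_j)$ in which the other components $X_k$ become vertical, so that the collapse $(x,t)\mapsto(x,f(t/\eta_j)t)$ automatically preserves each $X_k$; a non-orientable normal bundle is handled by passing to a $\Z_2$-equivariant double cover. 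The weak retraction is then simply $\rho=\Psi_1\circ\cdots\circ\Psi_s$, and $W$ is the union of the successive preimages of the inner tubes. Composition of self-maps of $M$ sidesteps exactly the chart-dependence and target-mismatch problems in your outline.
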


Before proving this we need a preliminary result.

\begin{lem}[${\mathcal S}^\nu$ double collar]\label{neighcollar}
Let $Y\subset M$ be a Nash submanifold of dimension $d-1$ that is closed in $M$. Let $V\subset M$ be an open semialgebraic neighborhood of $Y$ and $\pi:V\to Y$ a ${\mathcal S}^\nu$ retraction. Let $h:V\to\R$ be a ${\mathcal S}^\nu$ function such that $Y\subset\{h=0\}$ and $d_xh:T_xM\to\R$ is surjective for each $x\in Y$. Consider the ${\mathcal S}^\nu$ map $\varphi:=(\pi,h):V\to Y\times\R$. Then there exist an open semialgebraic neighborhood $W\subset V$ of $Y$ and a strictly positive ${\mathcal S}^\nu$ function $\veps:Y\to\R$ such that $\varphi(W)=\{(x,t)\in Y\times\R:\ |t|<\veps(x)\}$ and $\varphi|_W:W\to\varphi(W)$ is a ${\mathcal S}^\nu$ diffeomorphism.
\end{lem}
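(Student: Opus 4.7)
My strategy is to show that $\varphi$ is a local $\mathcal{S}^\nu$ diffeomorphism in a neighborhood of $Y$, upgrade this to global injectivity on a smaller semialgebraic neighborhood, and then carve out the desired double collar by choosing $\veps$ via a standard semialgebraic approximation.

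\emph{Step 1 (pointwise inverse function theorem).} Fix $x\in Y$. Since $\pi$ is a retraction, $d_x\pi|_{T_xY}=\id_{T_xY}$, so $\ker(d_x\pi)$ is a $1$-dimensional complement to $T_xY$ in $T_xM$. Because $h\equiv 0$ on the submanifold $Y$, we have $T_xY\subset\ker(d_xh)$, and surjectivity of $d_xh$ forces equality $T_xY=\ker(d_xh)$. Hence $d_x\varphi=(d_x\pi,d_xh):T_xM\to T_{\pi(x)}Y\times\R$ has trivial kernel, since any $v\in\ker(d_x\varphi)$ lies in $\ker(d_x\pi)\cap T_xY=\{0\}$; as source and target both have dimension $d$, $d_x\varphi$ is a linear isomorphism. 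The semialgebraic inverse function theorem \cite[Prop.2.9.7]{bcr} in the ${\mathcal S}^\nu$ setting then yields an open semialgebraic neighborhood $V_0\subset V$ of $Y$ on which $\varphi$ is a local ${\mathcal S}^\nu$ diffeomorphism; by the implicit function description of $Y$ furnished locally by $h$, I may shrink $V_0$ so that $\{h=0\}\cap V_0=Y$.

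\emph{Step 2 (globalizing injectivity).} Consider the semialgebraic set $\Delta':=\{(x_1,x_2)\in V_0\times V_0:\, x_1\neq x_2,\,\varphi(x_1)=\varphi(x_2)\}$ and its closure $\overline{\Delta'}$ in $V_0\times V_0$. Local injectivity from Step 1 shows $\overline{\Delta'}$ is disjoint from the diagonal of $V_0$, while the injectivity of $\varphi|_Y=(\id_Y,0)$ combined with $\{h=0\}\cap V_0=Y$ forces $\overline{\Delta'}\cap((Y\times V_0)\cup(V_0\times Y))=\varnothing$ (any limit point would have both coordinates in $Y$ and hence equal). A semialgebraic neighborhood-separation argument — using the Euclidean distance from $Y$ to the closed semialgebraic set $p_1(\overline{\Delta'})\cup p_2(\overline{\Delta'})$, which misses $Y$ — then produces an open semialgebraic neighborhood $V_1\subset V_0$ of $Y$ with $(V_1\times V_1)\cap\overline{\Delta'}=\varnothing$. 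Consequently $\varphi|_{V_1}$ is injective and therefore a ${\mathcal S}^\nu$ diffeomorphism onto the open semialgebraic set $\Omega:=\varphi(V_1)\subset Y\times\R$, which contains $Y\times\{0\}$.

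\emph{Step 3 (constructing the collar function).} Since $\Omega$ is open in $Y\times\R$ and contains the closed set $Y\times\{0\}$, the function $\veps_1:Y\to\R$ defined by $\veps_1(x):=\tfrac{1}{2}\dist((x,0),(Y\times\R)\setminus\Omega)$ is a positive continuous semialgebraic function satisfying $\{(x,t)\in Y\times\R:\,|t|<\veps_1(x)\}\subset\Omega$. By Efroymson-Shiota approximation of positive continuous semialgebraic functions on the Nash manifold $Y$ (cf.\ Theorem \ref{kr1} and \cite[II.4]{sh}), I obtain $\veps\in{\mathcal S}^\nu(Y)$ with $0<\veps<\veps_1$ pointwise. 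Setting $W:=(\varphi|_{V_1})^{-1}(\{(x,t)\in Y\times\R:\,|t|<\veps(x)\})$ gives the desired open semialgebraic neighborhood of $Y$, and $\varphi|_W:W\to\{(x,t):\,|t|<\veps(x)\}$ is the required ${\mathcal S}^\nu$ diffeomorphism.

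\emph{Main obstacle.} The crux is Step 2: one must convert the \emph{closed} semialgebraic non-injectivity locus $\overline{\Delta'}$, known only to be disjoint from $Y\times Y$, into an \emph{open} semialgebraic neighborhood of $Y$ on which $\varphi$ is globally injective. This is where the refinement $\{h=0\}\cap V_0=Y$ in Step 1 is essential: without it one cannot prevent a sequence of distinct pairs $(x_n,y_n)$ with $\varphi(x_n)=\varphi(y_n)$ from accumulating on $Y\times V_0$ with the second coordinate in $\{h=0\}\setminus Y$. Steps 1 and 3 are then routine applications of the inverse function theorem and semialgebraic approximation.
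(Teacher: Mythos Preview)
Your overall architecture matches the paper's: establish that $d_x\varphi$ is an isomorphism along $Y$ (Step~1), upgrade local to global injectivity on a semialgebraic neighborhood of $Y$ (Step~2), then carve out the collar via the distance function and ${\mathcal S}^\nu$ approximation (Step~3). Steps~1 and~3 are correct and essentially identical to the paper's argument.

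The gap is in Step~2. You assert that $p_1(\overline{\Delta'})\cup p_2(\overline{\Delta'})$ is a \emph{closed} semialgebraic set missing $Y$, and then separate $Y$ from it by a positive distance. But projections of closed semialgebraic sets need not be closed (think of the hyperbola $\{xy=1\}$ projecting to $\R\setminus\{0\}$). Concretely: if $a_n\in p_1(\overline{\Delta'})$ with $a_n\to a\in Y$, you get partners $b_n\in V_0$ with $\varphi(b_n)=\varphi(a_n)\to(a,0)$; nothing prevents the $b_n$ from escaping to $\partial V_0$ while $\pi(b_n)\to a$ and $h(b_n)\to 0$, so no subsequence converges in $V_0$ and you cannot produce a limit pair in $\overline{\Delta'}$ to contradict. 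Thus $\mathrm{cl}(p_1(\overline{\Delta'}))$ may well meet $Y$, and the separation argument collapses. Your refinement $\{h=0\}\cap V_0=Y$ does ensure $\overline{\Delta'}\cap(Y\times V_0\cup V_0\times Y)=\varnothing$, but that is not enough to control the projections.

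The paper sidesteps this by invoking \cite[Lem.\,9.2]{bfr}: given a local semialgebraic homeomorphism $\varphi|_{V'}$ whose restriction to the closed subset $Y$ is a homeomorphism onto its image, that lemma directly furnishes open semialgebraic neighborhoods $V''\supset Y$ and $U\supset Y\times\{0\}$ with $\varphi|_{V''}:V''\to U$ a homeomorphism. If you want to keep your hands-on approach, one repair is to first shrink $V_0$ so that $\varphi|_{V_0}$ becomes \emph{proper} over a neighborhood of $Y\times\{0\}$ (e.g.\ by intersecting with a Nash tubular neighborhood of $Y$ in $\R^m$ of controlled radius); properness then forces any sequence $b_n$ as above to subconverge in $V_0$, and your $\overline{\Delta'}$ argument goes through.
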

\begin{proof}
We show first: \em The derivative $d_x\varphi=(d_x\pi,d_xh):T_xM\to T_xY\times\R$ is an isomorphism for each $x\in Y$\em. As $\dim(T_xM)=\dim(T_xY\times\R)$, it is enough to show: \em $d_x\varphi$ is surjective\em. 

As $\varphi|_{Y}=(\id_{Y},0)$, we have $d_x\varphi|_{T_xY}=(\id_{T_xY},0)$, so $T_xY\times\{0\}\subset\im(d_x\varphi)$. In addition $d_xh:T_xM\to\R$ is surjective, so there exists $v\in T_xM$ such that $d_xh(v)=1$. Hence, $d_x\varphi(v)=(d_x\pi(v),1)$ and $d_x\varphi$ is surjective.

Let $V':=\{x\in V:\ d_x\varphi\ \text{is an isomorphism}\}$, which is an open semialgebraic neighborhood of $Y$ in $V$. Thus, $\varphi|_{V'}:V'\to Y\times\R$ is an open map and $\varphi(V')$ is an open semialgebraic neighborhood of $Y\times\{0\}$ in $Y\times\R$. As $\varphi|_{V'}:V'\to\varphi(V')$ is a local semialgebraic homeomorphism and $\varphi|_{Y}=(\id_{Y},0)$ is a semialgebraic homeomorphism (onto its image), there exist by \cite[Lem.9.2]{bfr} open semialgebraic neighborhoods $V''\subset V'$ of $Y$ and $U\subset Y\times\R$ of $Y\times\{0\}$ such that $\varphi|_{V''}:V''\to U$ is a semialgebraic homeomorphism.

Consider the strictly positive continuous semialgebraic function
\[
\delta:Y\to(0,+\infty),\ x\mapsto\dist((x,0),(Y\times\R)\setminus U).
\] 
By \cite[II.4.1, p.\hspace{2pt}123]{sh} there exists a strictly positive Nash function $\veps$ on $Y$ such that $\frac{1}{2}\delta<\veps<\delta$. Consider the open semialgebraic neighborhood $U':=\{(x,t)\in Y\times\R:\ |t|<\veps(x)\}\subset U$ of $Y\times\R$ and define $W:=(\varphi|_{V''})^{-1}(U')$. The restriction $\varphi|_W:W\to U'$ is a ${\mathcal S}^\nu$ diffeomorphism, as required.
\end{proof}

\begin{proof}[Proof of Proposition \em\ref{wr}]\setcounter{paragraph}{0}
Let $X_1,\ldots,X_s$ be the irreducible components of $X$ and fix $j=1,\ldots,s$. By \cite[II.6.2]{sh} there exists a Nash vector subbundle $(\mathscr{E}_j,\theta_j,X_j)$ of the trivial Nash vector bundle $(X_j\times\R^m,\vartheta_j,X_j)$, a (strictly) positive Nash function $\delta_j$ on $X_j$ and a Nash diffeomorphism $\chi_j:V_j\to\mathscr{E}_{j,\delta_j}$ from a semialgebraic open neighborhood $V_j\subset M$ of $X_j$ onto 
\[
\mathscr{E}_{j,\delta_j}:=\{(x,y)\in\mathscr{E}_j:\ \|y\|_m<\delta_j(x)\}
\]
such that $\chi_j|_{X_j}=(\id_{X_j},0)$. The tuple $(V_j,\chi_j,\mathscr{E}_j,\theta_j,X_j,\delta_j)$ is a Nash tubular neighborhood of $X_j$ in $M$ and the composition $\Theta_j:={\theta_j}|_{\mathscr{E}_{j,\delta_j}}\circ\chi_j:V_j\to X_j$ is a Nash retraction. As $X_j$ is a Nash hypersurface of $M$, the Nash subbundle $(\mathscr{E}_j,\theta_j,X_j)$ has rank $1$, that is, it is a line bundle. Shrinking $V_j$ if necessary, we may assume in addition $V_j\cap X_k=\varnothing$ whenever $X_j\cap X_k=\varnothing$. We refer the reader to \cite[\S III.10]{o} for the orientability of vector bundles. We distinguish two cases. 

\noindent{\sc Case 1.} \em Assume first that $\mathscr{E}_j$ is a trivial Nash line bundle \em (or equivalently, it is an orientable Nash line bundle \cite[Def.III.10.4]{o}). Then, $(\mathscr{E}_j,\theta_j,X_j)$ is a Nash diffeomorphic to the trivial bundle $(X_j\times\R,\pi_j,X_j)$. This means that there exists a Nash diffeomorphism 
\[
\mu_j:=(\lambda_j,h_j):V_j\to\{(x,y)\in X_j\times\R:\ |y|<\delta_j(x)\},\ z\mapsto(\lambda_j(z),h_j(z))
\]
such that $\mu_j|_{X_j}=(\id_{X_j},0)$. Thus, $X_j=h_j^{-1}(0)$, $d_xh_j:T_xM\to\R$ is surjective for each $x\in X_j$ and $\ker(d_xh_j)=T_xX_j$. Consequently, if $x\in X_j \cap X_k$ for some $k\neq j$, then $d_x(h_j|_{X_k}):T_xX_k\to\R$ is also surjective because $\ker(d_xh_j)=T_xX_j$ and $X_j$ and $X_k$ are transverse at $x$ in $M$.
 
 By Proposition \ref{compatretract} there exist an open semialgebraic neighborhood $W_j\subset V_j$ of $X_j$ together with a Nash retraction $\rho_j:W_j\to X_j$ compatible with $X$. Observe that $W_j\cap X_k\subset V_j\cap X_k=\varnothing$ whenever $X_j\cap X_k=\varnothing$. After shrinking $W_j$ there exists by Lemma \ref{neighcollar} a strictly positive ${\mathcal S}^\nu$ function $\veps_j:X_j\to\R$ such that the ${\mathcal S}^\nu$ map 
\[
\phi_j:=(\rho_j,h_j):W_j\to\Omega_j:=\{(x,t)\in X_j\times\R:\ |t|<\veps_j(x)\}
\]
is an ${\mathcal S}^\nu$ diffeomorphism. As $\rho_j(W_j\cap X_k)=X_j\cap X_k$, we can also assume by Lemma \ref{neighcollar} $\phi_j(W_j\cap X_k)=\Omega_j\cap((X_j\cap X_k)\times\R)$ for each $k=1,\ldots,s$ (recall that $d_x(h_j|_{X_k}):T_xX_k\to\R$ is surjective at each $x\in X_k\cap X_j$ for $k\neq j$). Let $\eta_j:X_j\to\R$ be a strictly positive ${\mathcal S}^\nu$ function such that $\eta_j<\veps_j$ on $X_j$ and let $f:\R\to[0,1]$ be a ${\mathcal S}^\nu$ function such that $f(t)=0$ for $|t|\leq\frac{1}{3}$ and $f(t)=1$ for $|t|\geq\frac{1}{2}$. Consider the ${\mathcal S}^\nu$ map 
\[
\varphi_j:\Omega_j\to\Omega_j,\ (x,t)\mapsto(x,f(t/\eta_j(x))t).
\]
Define $\psi_j:=(\phi_j^{-1}\circ\varphi_j\circ\phi_j):W_j\to W_j$ and observe that $\psi_j(W_j\cap X_k)\subset W_j\cap X_k$ for each $k=1,\ldots,s$. We extend $\psi_j$ by the identity to the whole $M$ and obtain a ${\mathcal S}^\nu$ map $\Psi_j:M\to M$ such that:
\begin{itemize}
\item $\Psi_j(W^*_j)=X_j$ for $W_j^*:=\phi_j^{-1}(\{(x,t)\in X_j\times\R:\ |t|\leq\eta_j(x)/3\})$.
\item $\Psi_j(y)=y$ if $y\in M\setminus\phi_j^{-1}(\{(x,t)\in X_j\times\R:\ |t|<\eta_j(x)/2\})$.
\item $\Psi_j(X_k)\subset X_k$ for each $k=1,\ldots,s$.
\item $\Psi_j$ is arbitrarily close to the identity on $X$ if $\eta_j$ is small enough.
\end{itemize}
Only the last assertion requires a further comment. As $\Psi_j$ is the identity on the difference $ M\setminus\phi_j^{-1}(\{(x,t)\in X_j\times\R:\ |t|<\eta_j(x)/2\})$ and $\phi_j$ is a Nash diffeomorphism (in particular a proper map), it is enough to prove by \cite[Rem.II.1.5]{sh} that $\varphi_j$ is close to the identity on $U:=\{(x,t)\in X_j\times\R:\ |t|<\eta_j(x)/2\}$. If $(x,t)\in U$, we have $|f(t/\eta_j(x))-1|\leq 1$ and
\[
\|\varphi_j(x,t)-(x,t)\|_{m+1}=|f(t/\eta_j(x))-1||t|\leq|t|<\eta_j(x)/2<\eta_j(x),
\]
so $\Psi_j$ is arbitrarily close to the identity on $X$ if $\eta_j$ is small enough.

\noindent{\sc Case 2.} \em Assume next that $\mathscr{E}_j$ is not orientable\em. Let $\pi_j:\widetilde{X}_j\to X_j$ be a Nash double cover such that the pull-back $(\pi_j^*\mathscr{E}_j,\theta_j',\widetilde{X}_j)$ is an orientable Nash line bundle \cite[Cor.III.10.6]{o}, hence $\pi_j^*\mathscr{E}_j\cong\widetilde{X}_j\times\R$ is a trivial Nash line bundle over $\widetilde{X}_j$. We can take $\widetilde{X}_j:=\{(x,y)\in\mathscr{E}_j:\ \|y\|_m=1\}$ (which is the unit sphere bundle in $\mathscr{E}_j$ with respect to the metric induced by that of $X_j\times\R^m$) and $\pi_j:=\theta_j|_{\widetilde{X}_j}:\widetilde{X}_j\to X_j$. Consider the Nash morphism of Nash line bundles $\gamma_j:\pi^*_j\mathscr{E}_j\to\mathscr{E}_j$ that makes the following diagram commutative
\[
\xymatrix{
\pi^*_j\mathscr{E}_j\ar[d]_{\theta_j'}\ar[r]^{\gamma_j}&\mathscr{E}_j\ar[d]^{\theta_j}&\ar@{_{(}->}[l]\mathscr{E}_{j,\delta_j}&\ar[l]_(0.4){\chi_j}^\cong\ar@<0.7ex>[lld]^{\Theta_j}V_j\\
\widetilde{X}_j\ar[r]^{\pi_j}&X_j\ar@{^{(}->}@<0.1ex>[rru]
}
\]
Notice that $\gamma_j:\pi^*_j\mathscr{E}_j\to\mathscr{E}_j$ is a Nash doble cover. Denote $\widetilde{V}_j:=\gamma_j^{-1}(\mathscr{E}_{j,\delta_j})$, consider the Nash double cover $\widetilde{\pi}_j:=\chi_j^{-1}\circ\gamma_j|_{\widetilde{V}_j}:\widetilde{V}_j\to V_j$ and identify $\widetilde{X}_j$ with $\widetilde{X}_j\times\{0\}\subset\pi^*_j\mathscr{E}_j$. Define $\widetilde{X}:=\widetilde{\pi}_j^{-1}(X\cap V_j)$, which is a Nash normal-crossings divisor of the Nash manifold $\widetilde{V}_j$, and $\widetilde{X}_k:=\widetilde{\pi}_j^{-1}(X_k\cap V_j)$ for $k=1,\ldots,s$. Each $\widetilde{X}_k$ is a finite union of disjoint irreducible components of $\widetilde{X}$. Denote the regular involution that generate the $\Z_2$ deck transformation group of the Nash double cover $\gamma_j:\pi^*_j\mathscr{E}_j\to\mathscr{E}_j$ with $\sigma_j:\pi^*_j\mathscr{E}_j\to\pi^*_j\mathscr{E}_j$. Recall that $\sigma_j$ has no fixed points, it inverts the orientation and $\gamma_j\circ\sigma_j=\gamma_j$. Consequently, the same happens with $\sigma_j|_{\widetilde{V}_j}:\widetilde{V}_j\to\widetilde{V}_j$, that is, it has no fixed points, it inverts the orientation and $\widetilde{\pi}_j\circ\sigma_j|_{\widetilde{V}_j}=\widetilde{\pi}_j$. For simplicity we denote $\sigma_j|_{\widetilde{V}_j}$ with $\sigma_j$.

 By Proposition \ref{compatretract} there exists a Nash retraction $\rho_j:W_j\to X_j$ compatible with $X$ where $W_j\subset V_j$ is an open semialgebraic neighborhood of $X_j$. Define $\widetilde{W}_j:=\widetilde{\pi}_j^{-1}(W_j)$, which is invariant under $\sigma_j$. We claim: \em The Nash retraction $\rho_j:W_j\to X_j$ lifts to a Nash retraction $\widetilde{\rho}_j:\widetilde{W}'_j\to\widetilde{X}_j$ compatible with $\widetilde{X}$ for some open semialgebraic neighborhood $\widetilde{W}'_j\subset\widetilde{W}_j$ of $\widetilde{X}_j$\em.

As $\widetilde{\pi}_j|_{\widetilde{W}_j}:\widetilde{W}_j\to W_j$ is a local Nash diffeomorphism, there exists by \cite[9.3.9]{bcr} a finite open semialgebraic covering $\widetilde{W}_j=\bigcup_{k=1}^\ell A_k$ such that $\widetilde{\pi}_j|_{A_k}:A_k\to B_k:=\widetilde{\pi}_j(A_k)$ is a Nash diffeomorphism. If we consider the covering $\{A_k,\sigma_j(A_k):\ k=1,\ldots,\ell\}$ of $\widetilde{W}_j$ we may assume in addition $\sigma_j(A_k)=A_{i(k)}$, $B_k=B_{i(k)}$ and $\widetilde{\pi}_j|_{A_k}=\widetilde{\pi}_j|_{A_{i(k)}} \circ \sigma_j|_{A_k}$ for each $k=1,\ldots,\ell$ and for some $i(k)=1,\ldots,\ell$. Observe that $W_j=\bigcup_{k=1}^\ell B_k$ is a finite open semialgebraic covering. Let $E_k:=B_k\cap X_j$ and observe that $X_j=\bigcup_{k=1}^\ell E_k$ is an open semialgebraic covering. Define $B_k':=B_k\cap\rho_j^{-1}(E_k)$, $W_j':=\bigcup_{k=1}^\ell B_k'$, $A_k':=A_k\cap\widetilde{\pi}_j^{-1}(B_k')$, $\widetilde{W}_j':=\bigcup_{k=1}^\ell A_k'$ and $D_k:=A_k'\cap\widetilde{X}_j$. Observe that $\widetilde{X}_j=\bigcup_{k=1}^\ell D_k$, the restriction map $\widetilde{\pi}_j|_{D_k}:D_k\to E_k$ is a Nash diffeomorphism, $E_k=B_k'\cap X_j$, $\rho_j|_{B_k'}:B_k'\to E_k$ is a Nash retraction, $E_k=E_{i(k)}$, $B'_k=B'_{i(k)}$, $\sigma_j(A'_k)=A'_{i(k)}$ and $\sigma_j(\widetilde{W}'_j)=\widetilde{W}'_j$. Consider the commutative diagram
\[
\xymatrix{
&\widetilde{W}_j\ar[ddl]_{\widetilde{\pi}_j|_{\widetilde{W}_j}}&\widetilde{W}_j'\ar[r]^{\widetilde{\rho}_j}\ar@{_{(}->}[l]&\widetilde{X}_j\\
&A_k\ar@{^{(}->}[u]\ar[d]_{\widetilde{\pi}_j|_{A_k}}&\ar@{_{(}->}[l]A_k'\ar@{^{(}->}[u]\ar[r]^{\widetilde{\rho}_j|_{A_k'}}\ar[d]_{\widetilde{\pi}_j|_{A_k'}}&D_k\ar@{^{(}->}[u]\ar[d]^{\widetilde{\pi}_j|_{D_k}}\\
W_j&\ar@{_{(}->}[l]B_k&\ar@{_{(}->}[l]B_k'\ar[r]^{\rho_j|_{B_k'}}&E_k
}
\]
where 
\[
\widetilde{\rho}_j:\widetilde{W}_j'\to\widetilde{X}_j,\ x\mapsto(\widetilde{\pi}_j|_{D_k})^{-1}((\rho_j\circ\widetilde{\pi}_j)(x))\quad\text{if $x\in A_k'$.}
\]

 \em The map $\widetilde{\rho}_j$ is well-defined, it is a Nash retraction \em (because $\rho_j$ is a Nash retraction) \em and $\rho_j\circ\widetilde{\pi}_j=\widetilde{\pi}_j\circ\widetilde{\rho}_j$ on $\widetilde{W}'_j$\em. In addition, \em $\widetilde{\rho}_j$ is compatible with $\widetilde{X}$ \em (because $\rho_j$ is compatible with $X$) \em and $\widetilde{\rho}_j\circ\sigma_j=\sigma_j\circ\widetilde{\rho}_j$ on $\widetilde{W}'_j$\em.

To prove that $\widetilde{\rho}_j$ is well-defined pick $x\in A_k'\cap A_i'$. Then $\widetilde{\pi}_j(x)\in\widetilde{\pi}_j(A_k')\cap\widetilde{\pi}_j(A_i')=B_k'\cap B_i'$, so $\rho_j(\widetilde{\pi}_j(x))\in E_k\cap E_i=B_k'\cap B_i'\cap X_j$. As $D_k\cap D_i=A_k'\cap A_i'\cap\widetilde{X}_j$ and $\widetilde{\pi}_j|_{D_k\cap D_i}:D_k\cap D_i\to E_k\cap E_i$ is a Nash diffeomorphism, there exists a unique $y\in D_k\cap D_i$ such that $(\rho_j\circ\widetilde{\pi}_j)(x)=\widetilde{\pi}_j(y)$, so $(\widetilde{\pi}_j|_{D_k})^{-1}((\rho_j\circ\widetilde{\pi}_j)(x))=(\widetilde{\pi}_j|_{D_i})^{-1}((\rho_j\circ\widetilde{\pi}_j)(x))$ and $\widetilde{\rho}_j$ is well-defined.

Let us check that $\widetilde{\rho}_j\circ\sigma_j=\sigma_j\circ\widetilde{\rho}_j$ on $\widetilde{W}'_j$. It is enough to prove this property locally. As $\widetilde{\pi}_j\circ\sigma_j=\widetilde{\pi}_j$ and $\sigma_j$ is an involution, we have $\widetilde{\pi}_j|_{D_k}\circ\sigma_j|_{D_{i(k)}}=\widetilde{\pi}_j|_{D_{i(k)}}$ and $\sigma_j|_{D_{i(k)}}^{-1}=\sigma_j|_{D_k}$, so $\sigma_j|_{D_k}\circ(\widetilde{\pi}_j|_{D_k})^{-1}=(\widetilde{\pi}_j|_{D_{i(k)}})^{-1}$. Thus,
\begin{align*}
(\sigma_j\circ\widetilde{\rho}_j)|_{A'_k}&=\sigma_j|_{D_k}\circ\big((\widetilde{\pi}_j|_{D_k})^{-1}\circ(\rho_j\circ\widetilde{\pi}_j)|_{A'_k}\big)\\
&=(\widetilde{\pi}_j|_{D_{i(k)}})^{-1} \circ \rho_j|_{B'_k} \circ \widetilde{\pi}_j|_{A'_k}\\
&=(\widetilde{\pi}_j|_{D_{i(k)}})^{-1} \circ \rho_j|_{B'_{i(k)}} \circ \big(\widetilde{\pi}_j|_{A'_{i(k)}} \circ \sigma_j|_{A'_k}\big)\\
&=\big((\widetilde{\pi}_j|_{D_{i(k)}})^{-1}\circ(\rho_j\circ\widetilde{\pi}_j)|_{A'_{i(k)}}\big)\circ\sigma_j|_{A'_k}=(\widetilde{\rho}_j\circ\sigma_j)|_{A'_k}
\end{align*}
for each $k$. The fact that $\widetilde{\rho}_j$ is compatible with $\widetilde{X}$ is clear by construction.

 After shrinking $\widetilde{W}_j'$ we may assume (as in {\sc Case 1}) that there exists an ${\mathcal S}^\nu$ function $\widetilde{h}_j:\widetilde{V}_j\to\R$ such that: 
\begin{itemize}
\item[(i)] $\widetilde{X}_j\subset\{\widetilde{h}_j=0\}$. 
\item[(ii)] $d_x\widetilde{h}_j:T_x\widetilde{V}_j\to\R$ is surjective for each $x\in\widetilde{X}_j$. 
\end{itemize}

Substituting $\widetilde{h}_j$ by $\widetilde{h}_j':=\widetilde{h}_j-\widetilde{h}_j\circ\sigma_j$, we may assume in addition $\widetilde{h}_j\circ\sigma_j=-\widetilde{h}_j$. Let us check that such change keeps properties (i) and (ii). As $\sigma_j(\widetilde{X}_j)=\widetilde{X}_j$, we have $\widetilde{X}_j\subset\{\widetilde{h}_j'=0\}$ (so property (i) holds for $\widetilde{h}_j'$). Pick $x\in\widetilde{X}_j$. The isomorphism $d_x\sigma_j:T_x\widetilde{V}_j\to T_{\sigma_j(x)}\widetilde{V}_j$ inverts the orientation and in fact if $v\in T_x\widetilde{V}_j\setminus T_x\widetilde{X}_j$ satisfies $d_x\widetilde{h}_j(v)>0$, then $d_x\sigma_j(v)\in T_{\sigma_j(x)}\widetilde{V}_j$ satisfies $d_{\sigma_j(x)}\widetilde{h}_j(d_x\sigma_j(v))<0$. Consequently, 
\[
d_x\widetilde{h}_j'(v)=d_x\widetilde{h}_j(v)-d_{\sigma_j(x)}\widetilde{h}_j(d_x\sigma_j(v))>0
\]
and $d_x\widetilde{h}_j':T_x\widetilde{V}_j\to\R$ is surjective (so property (ii) holds for $\widetilde{h}_j'$).

 After shrinking $\widetilde{W}'_j$ there exists by Lemma \ref{neighcollar} a strictly positive ${\mathcal S}^\nu$ function $\veps_j:\widetilde{X}_j\to\R$ such that the ${\mathcal S}^\nu$ map 
\[
\widetilde{\phi}_j:=(\widetilde{\rho}_j,\widetilde{h}_j):\widetilde{W}'_j\to \widetilde{\Omega}_j:=\{(x,t) \in \widetilde{X}_j\times\R:\ |t|<\veps_j(x)\}
\] 
is an ${\mathcal S}^\nu$ diffeomorphism and $\veps_j\circ\sigma_j=\veps_j$, so $\sigma_j(\widetilde{W}'_j)=\widetilde{W}'_j$. In addition, as $\widetilde{\phi}_j|_{\widetilde{X}_j}=(\id_{\widetilde{X}_j},0)$, we have $\widetilde{X}_j=\widetilde{h}_j^{-1}(0)$, $d_x\widetilde{h}_j:T_x\widetilde{V}_j\to\R$ is surjective for each $x\in\widetilde{X}_j$ and $\ker(d_x\widetilde{h}_j)=T_x\widetilde{X}_j$. Thus, if $x\in\widetilde{X}_j\cap\widetilde{X}_k$ for some $k\neq j$, then $d_x(\widetilde{h}_j|_{\widetilde{X}_k}):T_x\widetilde{X}_k\to\R$ is also surjective because $\ker(d_x\widetilde{h}_j)=T_x\widetilde{X}_j$ and $\widetilde{X}_j$ and $\widetilde{X}_k$ are transverse at $x$ in $\widetilde{V}_j$. Consequently, by Lemma \ref{neighcollar} we may assume $\widetilde{\phi}_j(\widetilde{W}'_j\cap \widetilde{X}_k)=\widetilde{\Omega}_j \cap ((\widetilde{X}_j\cap \widetilde{X}_k)\times\R)$ for each $k=1,\ldots,s$. Let $\eta_j:\widetilde{X}_j\to\R$ be a strictly positive ${\mathcal S}^\nu$ function such that $\eta_j<\veps_j$ and $\eta_j\circ\sigma_j=\eta_j$ on $\widetilde{X}_j$. Let $f:\R\to[0,1]$ be an even ${\mathcal S}^\nu$ function such that $f(t)=0$ for $|t|\leq\frac{1}{3}$ and $f(t)=1$ for $|t|\geq\frac{1}{2}$. Consider the ${\mathcal S}^\nu$ map
\[
\widetilde{\varphi}_j:\widetilde{\Omega}_j\to\widetilde{\Omega}_j,\ (x,t)\mapsto(x,f(t/\eta_j(x))t).
\]
Define $\widetilde{\psi}_j:=(\widetilde{\phi}_j^{-1}\circ\widetilde{\varphi}_j\circ\widetilde{\phi}_j):\widetilde{W}'_j\to\widetilde{W}'_j$ and observe that $\widetilde{\psi}_j(\widetilde{W}'_j\cap \widetilde{X}_k)\subset \widetilde{W}'_j\cap\widetilde{X}_k$ for each $k=1,\ldots,s$. We extend $\widetilde{\psi}_j$ by the identity to the whole $\widetilde{V}_j$ and obtain a ${\mathcal S}^\nu$ map $\widetilde{\Psi}_j:\widetilde{V}_j\to\widetilde{V}_j$ such that:
\begin{itemize}
\item $\widetilde{\Psi}_j(\widetilde{W}^*_j)=\widetilde{X}_j$ for $\widetilde{W}_j^*:=\widetilde{\phi}_j^{-1}(\{(x,t)\in\widetilde{X}_j\times\R:\ |t|\leq\eta_j(x)/3\})$.
\item $\widetilde{\Psi}_j(y)=y$ if $y\in\widetilde{V}_j\setminus\widetilde{\phi}_j^{-1}(\{(x,t)\in\widetilde{X}_j\times\R:\ |t|<\eta_j(x)/2\})$.
\item $\widetilde{\Psi}_j(\widetilde{X}_k)\subset \widetilde{X}_k$ for each $k=1,\ldots,s$.
\item $\widetilde{\Psi}_j$ is arbitrarily close to the identity on $\widetilde{X}$ if $\eta_j$ is small enough.
\item $\sigma_j\circ\widetilde{\Psi}_j=\widetilde{\Psi}_j\circ\sigma_j$.
\end{itemize}

Only the latter equality $\sigma_j\circ\widetilde{\Psi}_j=\widetilde{\Psi}_j\circ\sigma_j$ requires some comment. It is enough to prove that $\widetilde{\psi}_j=\sigma_j\circ\widetilde{\psi}_j\circ\sigma_j|_{\widetilde{W}_j'}$. Consider the involution 
\[
\tau:\widetilde{\Omega}_j\to\widetilde{\Omega}_j,\ (x,t)\mapsto(\sigma_j(x),-t)
\]
and observe that $\widetilde{\phi}_j\circ\sigma_j|_{\widetilde{W}_j'}=\tau\circ\widetilde{\phi}_j$ and $\tau\circ\widetilde{\varphi}_j\circ\tau=\widetilde{\varphi}_j$ (because $\eta_j\circ\sigma_j=\eta_j$ and $f$ is an even function). Consequently,
\begin{multline*}
\sigma_j\circ\widetilde{\psi}_j\circ\sigma_j|_{\widetilde{W}_j'}=\sigma_j\circ(\widetilde{\phi}_j^{-1}\circ\widetilde{\varphi}_j\circ\widetilde{\phi}_j)\circ\sigma_j|_{\widetilde{W}_j'}\\
=(\widetilde{\phi}_j\circ\sigma_j|_{\widetilde{W}_j'})^{-1}\circ\widetilde{\varphi}_j\circ(\widetilde{\phi}_j\circ\sigma_j|_{\widetilde{W}_j'})=\widetilde{\phi}_j^{-1}\circ\tau\circ\widetilde{\varphi}_j\circ\tau\circ\widetilde{\phi}_j=\widetilde{\phi}_j^{-1}\circ\widetilde{\varphi}_j\circ\widetilde{\phi}_j=\widetilde{\psi}_j.
\end{multline*}
Thus, $\sigma_j\circ\widetilde{\Psi}_j=\widetilde{\Psi}_j\circ\sigma_j$.

We conclude that there exists an ${\mathcal S}^\nu$ map $\Psi_j:M\to M$ such that:
\begin{itemize}
\item $\Psi_j(W_j^*)=X_j$ for $W_j^*:=\widetilde{\pi}_j(\widetilde{W}^*_j)$.
\item $\Psi_j(y)=y$ for each $y\in M\setminus\widetilde{\pi}_j(\widetilde{W}'_j)$.
\item $\Psi_j(X_k)\subset X_k$ for $k=1,\ldots,s$.
\item $\Psi_j$ is arbitrarily close to the identity on $X$ if $\eta_j$ is small enough.
\end{itemize}

\noindent{\sc Final construction.} The composition $\rho:=\Psi_1\circ\cdots\circ\Psi_s$ is an ${\mathcal S}^\nu$ map close to the identity on $X$ and maps the closed semialgebraic neighborhood
\[
W:=\bigcup_{j=1}^s(\Psi_{j+1}\circ\cdots\circ\Psi_s)^{-1}(W_j^*)\subset M
\]
of $X$ onto $X$, where $\Psi_{j+1}\circ\cdots\circ\Psi_s$ denotes $\id_M$ if $j=s$. Thus, $\rho:W\to X$ is a ${\mathcal S}^\nu$ weak retraction that is arbitrarily ${\mathcal S}^0$ close to the identity on $X$, as required.
\end{proof}

\section{Proof of Theorem \ref{main1}}\label{s4}

We are ready to present the proof of Theorem \ref{main1}, which is inspired by some techniques developed in \cite{br}.

\begin{proof}[Proof of Theorem \em\ref{main1}]
First, by \cite[2.7.5]{bcr} we may assume $T$ is closed in $\R^n$. Thus, by \cite{cs}, $T$ is a Nash subset of $\R^n$ (see also \cite{tt}). Let $F\in{\mathcal N}(\R^n)$ be a Nash equation of $T$. By Artin-Mazur description of Nash functions \cite[8.4.4]{bcr} there exists a non-singular irreducible algebraic subset $V$ of some $\R^{n+p}$ of dimension $n$, a connected component $M'$ of $V$, a Nash diffeomorphism $\sigma:\R^n\to M'$ (whose inverse is the restriction to $M'$ of the projection $\Pi:\R^{n+p}\to\R^n$ onto the first $n$ coordinates) and a polynomial function $G:V\to\R$ such that $G(\sigma(x))=F(x)$ for each $x \in \R^n$. In particular, 
\[
\{G=0\}\cap M'=\{F\circ\Pi|_{M'}=0\}=\sigma(T).
\]
Thus, the Zariski closure $\ol{\sigma(T)}^{\zar}$ of $\sigma(T)$ satisfies $\ol{\sigma(T)}^{\zar}\cap M'=\sigma(T)$. Consequently, we may assume from the beginning that $T$ is a finite union of connected components of an algebraic set $Y\subset\R^n$.

Denote $\pi:\R^{n+2}\to\R^{n+1}$ the projection onto the first $n+1$ coordinates. By \cite[Lem.2.2]{br} there exists an irreducible algebraic set $Z\subset\R^{n+2}$ such that $\Sing(Z)=Y\times\{(0,0)\}\subset\{x_{n+1}=0,x_{n+2}=0\}$ and the restriction $\psi:=\pi|_{Z}:Z\to\R^{n+1}$ is a semialgebraic homeomorphism. 

By Theorem \ref{hi1} there exists a non-singular real algebraic set $Z'\subset\R^q$ and a proper regular map $\phi:Z'\to Z$ such that the restriction
\[
\phi|_{Z'\setminus\phi^{-1}(\Sing(Z))}:Z'\setminus\phi^{-1}(\Sing(Z))\to Z\setminus\Sing(Z)
\]
is a Nash diffeomorphism whose inverse map is also regular and $Y':=\phi^{-1}(\Sing(Z))$ is an (algebraic) normal-crossings divisor of $Z'$. As $T':=\phi^{-1}(T\times\{(0,0)\})$ is an open and closed subset of $Y'$, it is a union of connected components of $Y'$, so it is a Nash normal-crossings divisor of $Z'$. 

Let $f\in{\mathcal S}^0(S,T)$, fix a real number $\veps>0$ and let $K_0:=f(S)$, which is a compact semialgebraic subset of $\R^n$. Let $V_1\subset V_2\subset\R^{n+1}$ be open semialgebraic neighborhoods of $K_0\times\{0\}$ whose closures $K_i:=\cl(V_i)$ are compact and $K_1\subset V_2$. As $\psi:Z\to\R^{n+1}$ is a semialgebraic homeomorphism and $\phi:Z'\to Z$ is a proper regular map, we deduce $K'_i:=(\psi\circ\phi)^{-1}(K_i)$ is a compact semialgebraic subset of $Z'$ and $K_1'\subset V_2':=(\psi\circ\phi)^{-1}(V_2)$. The restriction $(\psi\circ\phi)|_{K'_2}:K'_2\to\R^{n+1}$ is a uniformly continuous map, so there exists $\eta>0$ such that if $z,z'\in K'_2$ and $\|z-z'\|_q<\eta$, then 
\begin{equation*}
\|(\psi\circ\phi)(z)-(\psi\circ\phi)(z')\|_{n+1}<\frac{\veps}{3}.
\end{equation*}
By Proposition \ref{wr} there exists a small open semialgebraic neighborhood $W\subset Z'$ of $T'$ and an ${\mathcal S}^\nu$ weak retraction $\rho:W\to T'$ that is arbitrarily ${\mathcal S}^0$ close to the identity on $T'$. Shrinking the Nash manifold $W$, we may assume in addition $W\cap Y'=T'$, $\|\rho(y)-y\|_q<\eta$ for each $y\in W$ and $\rho(\cl(W\cap K'_1))\subset V_2'\subset K_2'$. Thus, if $y\in W\cap K'_1$,
\begin{equation}\label{uco2}
\|(\psi\circ\phi\circ\rho)(y)-(\psi\circ\phi)(y)\|_{n+1}<\frac{\veps}{3}.
\end{equation}

Denote $V'_1:=(\psi\circ\phi)^{-1}(V_1)$. As $\psi\circ\phi$ is proper and $T'\subset W$, the semialgebraic set
\[
C:=(\psi\circ\phi)(Z'\setminus(W\cap V_1'))=(\psi\circ\phi)(Z'\setminus W)\cup(\psi\circ\phi)(Z'\setminus V'_1)
\] 
is a closed semialgebraic subset of $\R^{n+1}$ that does not meet $(T\times\{0\})\cap V_1$. 

Suppose by contradiction that $y\in C\cap(T\times\{0\})\cap V_1$. There exists $z\in Z'\setminus(W\cap V_1')$ such that $(\psi\circ\phi)(z)=y$, so $z\in(\psi\circ\phi)^{-1}(T\times\{0\})\cap(\psi\circ\phi)^{-1}(V_1)=T'\cap V_1'\subset W\cap V_1'$, which is a contradiction. 

Consider the distance function $\delta:\R^{n+1}\to[0,+\infty),\ y\mapsto\dist(y,C)$ and observe that $\delta|_{(T\times\{0\})\cap V_1}$ is strictly positive. As $S$ is compact, the homomorphism $\delta_*:{\mathcal S}^0(S,\R^{n+1})\to{\mathcal S}^0(S,\R),\ g\mapsto\delta\circ g$ is by Proposition \ref{comr} continuous. The continuous semialgebraic function $\delta\circ(f,0)$ is strictly positive on $S$ and attains a minimum $\delta_0>0$ over $S$. Let $\veps'\in(0,\veps)$ be such that if $g\in{\mathcal S}^0(S,\R^{n+1})$ and $\|g-(f,0)\|_{n+1}<\veps'$ on $S$ then 
\[
|\delta\circ g-\delta\circ(f,0)|<\frac{\delta_0}{2} \quad \text{on $S$},
\]
so in particular $\delta\circ g$ is strictly positive and $\im(g)\subset\R^{n+1}\setminus C$. Consider the continuous semialgebraic function $f':=(f,\frac{\veps'}{3}):S\to\R^{n+1}$. We have 
\begin{equation}\label{uco0}
\|(f,0)-f'\|=\frac{\veps'}{3}<\frac{\veps}{3}\quad \text{on $S$}
\end{equation}
and $\im(f')\cap\{x_{n+1}=0\}=\varnothing$, so $\im(f')\cap(\{x_{n+1}=0\}\cup C)=\varnothing$. The following commutative diagram summarizes the situation we have achieved until the moment.
\[
\xymatrix{
W\ar@<0.6ex>[d]^\rho\ar@{^{(}->}[drr]\\
T'\ar@{^{(}->}[r]\ar@{^{(}->}@<0.6ex>[u]\ar[d]^{\phi}&Y'\ar@{^{(}->}[r]\ar[d]^{\phi}&Z'\ar[d]^{\phi}\\
T\times\{(0,0)\}\ar[d]^\psi\ar@{^{(}->}[r]&Y\times\{(0,0)\}=\Sing(Z)\ar[d]^\psi\ar@{^{(}->}[r]&Z\ar@{^{(}->}[r]\ar@<-0.6ex>@{->}[dr]_\psi&\R^{n+2}\ar[d]^\pi\\
T\times\{0\}\ar@{^{(}->}[r]&Y\times\{0\}\ar@{^{(}->}[rr]&&\R^{n+1}\ar@<-0.6ex>@{->}[ul]_{\psi^{-1}}\\
S\ar[u]_{(f,0)}\ar[rrr]_{f':=(f,\frac{\veps}{3})}&&&\R^{n+1}\setminus(\{x_{n+1}=0\}\cup C)\ar@{^{(}->}[u]
}
\]

The ${\mathcal S}^0$ map $\psi^{-1}\circ f'$ satisfies $\im(\psi^{-1}\circ f')\cap(\Sing(Z)\cup\psi^{-1}(C))=\varnothing$ (recall that $\psi(\Sing(Z))=Y\times\{0\}\subset\{x_{n+1}=0\}$), whereas the ${\mathcal S}^0$ map $f'':=(\phi|_{Z'\setminus Y'})^{-1}\circ\psi^{-1}\circ f':S\to Z'$ is well-defined and $\im(f'')\cap(\psi\circ\phi)^{-1}(C)=\varnothing$. Thus, $\im(f'')\cap(Z'\setminus (W\cap V'_1))=\varnothing$, so $\im(f'')\subset W\cap V'_1$. Write $f'':S\to W\cap V'_1$ and note that $f'=\psi\circ\phi\circ f''$. By \eqref{uco2} 
\begin{equation}\label{uco3}
\|\psi\circ\phi\circ\rho\circ f''-f'\|_{n+1}=\|\psi\circ\phi\circ\rho\circ f''-\psi\circ\phi\circ f''\|_{n+1}<\frac{\veps}{3} \quad \text{on $S$}.
\end{equation}

By \cite[Thm.1]{dk} there exists an open semialgebraic neighborhood $U\subset\R^m$ of $S$ such that $f''$ extends to a continuous semialgebraic map $F'':U\to W\cap V'_1$ between the Nash manifolds $U$ and $W\cap V'_1$. Let $H_0:U\to W\cap V'_1$ be a Nash map close to $F''$ (use \cite[Thm.II.4.1]{sh}). The restriction $h_0:=H_0|_S:S\to W\cap V'_1$ is a Nash map close to $f''$. By Proposition \ref{comr} the homomorphism 
\[
(\psi\circ\phi\circ\rho)_*:{\mathcal S}^0(S,W \cap V'_1)\to{\mathcal S}^0(S,\R^{n+1}),\ g\mapsto(\psi\circ\phi\circ\rho)\circ g
\] 
is continuous with respect the ${\mathcal S}^0$ topology, so $(h,0):=\psi\circ\phi\circ\rho\circ h_0:S\to T\times\{0\}$ is close to $\psi\circ\phi\circ\rho\circ f'':S\to\R^{n+1}$. Thus, we may assume 
\begin{equation}\label{uco4}
\|\psi\circ\phi\circ\rho\circ h_0-\psi\circ\phi\circ\rho\circ f''\|_{n+1}<\frac{\veps}{3}.
\end{equation}
By \eqref{uco0}, \eqref{uco3} and \eqref{uco4} we deduce
\begin{multline*}
\|h-f\|_n=\|(h,0)-(f,0)\|_{n+1}\leq\|\psi\circ\phi\circ\rho\circ h_0-\psi\circ\phi\circ\rho\circ f''\|_{n+1}\\
+\|\psi\circ\phi\circ\rho\circ f''-f'\|_{n+1}+\|f'-(f,0)\|_{n+1}<\frac{\veps}{3}+\frac{\veps}{3}+\frac{\veps}{3}=\veps.
\end{multline*}

In addition, $(h,0)=\psi\circ\phi\circ\rho\circ h_0$ is an ${\mathcal S}^\nu$ map, because it is a composition of ${\mathcal S}^\nu$ maps. Thus, we have found an ${\mathcal S}^\nu$ map $h:S\to T$ that is close to $f$, as required.
\end{proof}


\appendix
\section{Proof of Proposition \ref{comr}}\label{A}

We afford here the proof of Proposition \ref{comr} in the general case.

\begin{proof}[Proof of Proposition \em \ref{comr}]
As $S$ is locally compact, it is closed in some open semialgebraic set $U\subset\R^m$. Analogously, $T$ is closed in some open semialgebraic set $V\subset\R^n$ and there exists a ${\mathcal S}^\nu$ map $F:V\to\R^p$ that extends $f$. For each open semialgebraic neighborhood $U'\subset U$ of $S$ we have the following commutative diagram:
\begin{equation}\label{diag2}
\xymatrix{
&{\mathcal S}^\nu(S,T)\ar[r]^{f_*}\ar@{^{(}->}[d]^{\tt j}&{\mathcal S}^\nu(S,T')\ar@{^{(}->}[d]^{{\tt j}'}\\
{\mathcal S}^\nu(S,\R^n)&{\mathcal S}^\nu(S,V)\ar[r]^{F_*}\ar@{_{(}->}[l]^{\tt i}&{\mathcal S}^\nu(S,\R^p)\\
{\mathcal S}^\nu(U',\R^n)\ar[u]_{\eta_{U'}}&{\mathcal S}^\nu(U',V)\ar[u]_{\theta_{U'}}\ar[r]^{F_*}\ar@{_{(}->}[l]^{{\tt i}'}&{\mathcal S}^\nu(U',\R^p)\ar[u]_{\rho_{U'}}
}
\end{equation}
where $\theta_{U'}$, $\eta_{U'}$ and $\rho_{U'}$ denote the (continuous) restriction homomorphisms. Note that $\eta_{U'}$ and $\rho_{U'}$ are actually open (surjective) quotient maps. We claim: 
\[
{\mathcal S}^\nu(S,V)=\bigcup_{\substack{\text{$U'$ open in $U$,}\\{S\subset U'}}}\theta_{U'}({\mathcal S}^\nu(U',V)).
\]
The inclusion right to left is clear, so let us prove the converse one. Let $g\in{\mathcal S}^\nu(S,V)$ and let $G:U\to\R^n$ be a ${\mathcal S}^\nu$ extension of $g$ to $U$. Let $U':=G^{-1}(V)$ and $G':=G|_{U'}:U'\to V$, which is a ${\mathcal S}^\nu$ extension of $g$ to $U'$ whose image is contained in $V$, that is, $G'\in{\mathcal S}^\nu(U',V)$.

The lower $F_*$ in diagram \eqref{diag2} is continuous (it is the Nash manifolds case \cite[II.1.5, p.\hspace{1.5pt}83]{sh}), so the composition $\rho_{U'}\circ F_*$ is continuous too for each open semialgebraic neighborhood $U'\subset U$ of $S$. Let us prove: \em The map $F^*$ in the middle row of diagram \eqref{diag2} is continuous\em. 

Let ${\mathcal W}$ be an open subset of ${\mathcal S}^\nu(S,\R^p)$. We first show
\[
F_*^{-1}({\mathcal W})=\bigcup_{\substack{\text{$U'$ open in $U$,}\\{S\subset U'}}}\theta_{U'}((\rho_{U'}\circ F_*)^{-1}({\mathcal W})).
\]
Indeed, let $g\in F_*^{-1}({\mathcal W})$ and let $U'_0\subset U$ be an open neighborhood of $S$ for which there exists $G\in{\mathcal S}^\nu(U'_0,V)$ with $\theta_{U'_0}(G)=g$. Observe that $(\rho_{U'_0}\circ F_*)(G)=(F_*\circ \theta_{U_0'})(G)=F_*(g)\in{\mathcal W}$. Thus, $g=\theta_{U'_0}(G)\in\theta_{U'_0}((\rho_{U'_0}\circ F_*)^{-1}({\mathcal W}))$. The converse inclusion is clear. 

Hence it suffices to show that \em each set $\theta_{U'}((\rho_{U'}\circ F_*)^{-1}({\mathcal W}))$ is open in ${\mathcal S}^\nu(S,V)$\em. As $\rho_{U'}\circ F^*$ is continuous, ${\tt i}$ is continuous, ${\tt i}'$ is a homeomorphism onto its image, the equality ${\tt i}\circ\theta_{U'}=\eta_{U'}\circ{\tt i}'$ holds and $\eta_{U'}$ is an open quotient map for each open neighborhood $U'\subset U$ of $S$, we are reduced to prove: \em ${\mathcal S}^\nu(U',V)$ is an open subset of ${\mathcal S}^\nu(U',\R^n)$ for each open neighborhood $U'\subset U$ of $S$\em, which follows from \cite[2.D]{bfr}. Consequently $F^*$ is continuous.

Now we turn to the upper square of diagram \eqref{diag2}. As $F_*$ in the middle row is continuous, the composition $F_*\circ{\tt j}$ is continuous too. But this map coincides with ${\tt j}'\circ f_*$, which is consequently continuous. As ${\tt j}'$ is a homeomorphism onto its image, $f_*$ is continuous, as required.
\end{proof}


\bibliographystyle{amsalpha}

\end{document}